 \newtheorem{theorem}{Theorem}[section]
 \newtheorem{corollary}[theorem]{Corollary}
 \newtheorem{lemma}{Lemma}[section]
 \newtheorem{proposition}[theorem]{Proposition}
 \theoremstyle{definition}
 \theoremstyle{remark}
 \newtheorem{remark}{Remark}[section]
 \theoremstyle{eg}
 \theoremstyle{fact}
\numberwithin{equation}{section}
\begin{document}
\title[generalized CQ method]
{Strong convergence theorems by generalized CQ method in Hilbert
spaces}

\author[S. He]{Songnian He$^*$ }
\thanks{$^*$Supported by Tianjin Natural Science Foundation in China
Grant (06YFJMJC12500). }

\address{$^*$College of Since, Civil Aviation University of China,
Tianjin 300300, China} \email{hesongnian2003@yahoo.com.cn}

\author[T. Shi]{Tian Shi$^{**}$}
\address{$^{**}$College of Since, Civil Aviation University of China,
Tianjin 300300, China} \email{tianapply@gmail.com}
\thanks{$^{**}$Corresponding author.}

\subjclass[2000]{47H09, 65J15}%

\keywords{Generalized CQ method; Strong convergence; Mann's
iteration process; Ishikawa's iteration process; Halpern's iteration
process.}

\maketitle

\begin{abstract}
Recently, CQ method has been investigated extensively. However, it
is mainly applied to modify Mann, Ishikawa and Halpern iterations to
get strong convergence. In this paper, we study the properties of CQ
method and proposed a framework. Based on that, we obtain a series
of strong convergence theorems. Some of them are the extensions of
previous results. On the other hand, CQ method, monotone Q method,
monotone C method and monotone CQ method, used to be given
separately, have the following relations: CQ method TRUE
$\Rightarrow$ monotone Q method TRUE $\Rightarrow$ monotone C method
TRUE $\Leftrightarrow$ monotone CQ method TRUE.

\end{abstract}

\date{2009-12-18}

\section{Introduction}
Let $C$ be a nonempty closed convex subset of a real Hilbert space
$H$ and $T$ a self-mapping of $C$. Recall that $T$ is said to be a
nonexpansive mapping if
\begin{equation}
\|Tx-Ty\|\leq\|x-y\|, \forall x,y\in C.
\end{equation}
$T$ is said to be strictly pseudo-contractive if there exists a
constant $0\leq\kappa<1$ such that
\begin{equation}
\|Tx-Ty\|^2\leq\|x-y\|^2+\kappa\|(I-T)x-(I-T)y\|^2
\end{equation}
for all $x,y\in C$. For such cases, $T$ is also said to be a
$\kappa$-strictly pseudo-contractive mapping. It is also said to be
pseudo-contractive if $\kappa=1$ in (1.2). That is,
\begin{equation}
\|Tx-Ty\|^2\leq\|x-y\|^2+\|(I-T)x-(I-T)y\|^2
\end{equation}
for all $x,y\in C$. Clearly, the class of strict pseudo-contractions
falls into the one between classes of nonexpansive mappings and
pseudo-contractions.

It is very clear that, in a real Hilbert space $H$, (1.3) is
equivalent to
\begin{equation}
\langle Tx-Ty,x-y\rangle\leq\|x-y\|^2
\end{equation}

Recall that three iteration processes are often used to approximate
a fixed point of a nonexpansive mapping. The first one is Halpern's
iteration process \cite{2} which is defined as follows: Take an
initial guess $x_0\in C$ arbitrarily and define $\{x_n\}$
recursively by
\begin{equation}
x_{n+1}=(1-\alpha_n) x_0+\alpha_n Tx_n, \ n\geq 0
\end{equation}
where $\{\alpha_n\}$ is a sequence in the interval $[0,1]$.

The second is known as Mann's iteration process \cite{4} which is
defined as
\begin{equation}
x_{n+1}=(1-\alpha_n) x_n+\alpha_n Tx_n, \ n\geq 0
\end{equation}
where the initial guess $x_0$ is taken in $C$ arbitrarily and the
sequence $\{\alpha_n\}$ is in the interval $[0,1]$.

The third is referred to as Ishikawa's iteration process \cite{3}
which is defined recursively by
\begin{equation}
\left\{\begin{array}{l}z_n=(1-\alpha_n) x_n+\alpha_n Tx_n\\
x_{n+1}=(1-\beta_n) x_n+\beta_nTz_n
\end{array}\right.
, \ n\geq 0
\end{equation}
where the initial guess $x_0$ is taken in $C$ arbitrarily and
$\{\alpha_n\}$ and $\{\beta_n\}$ are sequences in the interval
$[0,1]$.

We know that (1.5) has strong convergence under certain conditions,
but both (1.6) and (1.7) have only weak convergence, in general,
even for nonexpansive mappings (see an example in \cite{1}).

Recently, modifications of algorithm (1.5), (1.6) and (1.7) have
been extensively investigated; see \cite{5,6,8,9,10,11} and the
references therein. For instance, one of the most important methods
was firstly introduced by Nakajo and Takahashi \cite{6} in 2003.

\begin{theorem}[see \cite{6}]
Let $C$ be a nonempty closed convex subset of a Hilbert space $H$
and $T$ a nonexpansive mapping of \ $C$ into itself such that
$F(T)\neq \O$. Suppose $x_0\in C$ chosen arbitrarily and $\{x_n\}$
is given by
\begin{equation}
\left\{\begin{array}{l}y_n=(1-\alpha_n) x_n+\alpha_n Tx_n\\
C_n=\{z\in C: \|z-y_n\|\leq\|z-x_n\|\}\\
Q_n=\{z\in C: \langle z-x_n,x_n-x_0\rangle\geq 0\}\\
x_{n+1}=P_{C_n\cap Q_n}x_0
\end{array}\right.
\end{equation}
where $P_{C_n\cap Q_n}$ is the metric projection from $C$ onto
$C_n\cap Q_n$ and $\{\alpha_n\}$ is chosen such that
$0<\alpha\leq\alpha_n\leq 1$. Then, $\{x_n\}$ converges strongly to
$P_{F(T)}x_0$, where $P_{F(T)}$ is the metric projection from $C$
onto $F(T)$.
\end{theorem}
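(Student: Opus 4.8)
The plan is to follow the standard CQ analysis, breaking the argument into elementary geometric facts about the sets $C_n, Q_n$ and a single compactness/demiclosedness step. First I would verify that each $C_n$ is closed and convex: since $\|z-y_n\|\le\|z-x_n\|$ is equivalent, after expanding the squares, to the linear inequality $2\langle z, x_n-y_n\rangle \le \|x_n\|^2-\|y_n\|^2$, the set $C_n$ is the intersection of $C$ with a half-space; $Q_n$ is manifestly a half-space intersected with $C$. Hence $P_{C_n\cap Q_n}$ makes sense as soon as $C_n\cap Q_n$ is nonempty.

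Next I would show $F(T)\subset C_n\cap Q_n$ for every $n$ by induction, which simultaneously guarantees that the iteration is well defined. For the $C_n$ part, take $p\in F(T)$; nonexpansiveness gives $\|y_n-p\|\le (1-\alpha_n)\|x_n-p\|+\alpha_n\|Tx_n-p\|\le\|x_n-p\|$, so $p\in C_n$. For the $Q_n$ part, note $Q_0=C$ and, using the variational characterization of the metric projection, $x_{n+1}=P_{C_n\cap Q_n}x_0$ satisfies $\langle z-x_{n+1}, x_{n+1}-x_0\rangle\ge 0$ for all $z\in C_n\cap Q_n$; applying this to $p\in F(T)\subset C_n\cap Q_n$ yields $p\in Q_{n+1}$. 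Thus $F(T)\subset C_n\cap Q_n\ne\emptyset$ and $\{x_n\}$ is well defined; taking $z=P_{F(T)}x_0$ also shows $\|x_{n+1}-x_0\|\le\|P_{F(T)}x_0-x_0\|$, so $\{x_n\}$ is bounded.

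The heart of the argument is to prove $\|x_n-Tx_n\|\to 0$. The definition of $Q_n$ is engineered so that $x_n=P_{Q_n}x_0$; since $x_{n+1}\in Q_n$ this forces $\|x_n-x_0\|\le\|x_{n+1}-x_0\|$, so the bounded sequence $\{\|x_n-x_0\|\}$ is nondecreasing and hence convergent. From $x_{n+1}\in Q_n$ we also get $\langle x_{n+1}-x_n, x_n-x_0\rangle\ge 0$, and expanding $\|x_{n+1}-x_n\|^2$ gives $\|x_{n+1}-x_n\|^2\le\|x_{n+1}-x_0\|^2-\|x_n-x_0\|^2\to 0$. Because $x_{n+1}\in C_n$, we have $\|x_{n+1}-y_n\|\le\|x_{n+1}-x_n\|$, whence $\|x_n-y_n\|\le 2\|x_{n+1}-x_n\|\to 0$; since $y_n-x_n=\alpha_n(Tx_n-x_n)$ and $\alpha_n\ge\alpha>0$, this yields $\|Tx_n-x_n\|\le\alpha^{-1}\|x_n-y_n\|\to 0$.

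Finally I would upgrade to strong convergence. Let $q=P_{F(T)}x_0$. For any weakly convergent subsequence $x_{n_k}\rightharpoonup w$, the demiclosedness of $I-T$ at $0$ for nonexpansive mappings, combined with $\|x_n-Tx_n\|\to 0$, gives $w\in F(T)$; weak lower semicontinuity of the norm together with $\|x_{n_k}-x_0\|\le\|q-x_0\|$ then forces $\|w-x_0\|\le\|q-x_0\|$, and minimality of $q$ gives $w=q$. Hence $x_n\rightharpoonup q$ and $\|x_n-x_0\|\to\|q-x_0\|$. Feeding this into the projection inequality $\|x_{n+1}-q\|^2\le\|x_0-q\|^2-\|x_0-x_{n+1}\|^2$, valid since $q\in C_n\cap Q_n$, shows that the right-hand side tends to $0$, so $x_n\to q$ strongly. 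The step I expect to be the main obstacle is establishing $w\in F(T)$: it is exactly here that one must invoke the demiclosedness principle, the one genuinely infinite-dimensional ingredient, rather than purely algebraic manipulation of the projections.
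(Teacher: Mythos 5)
Your proof is correct and follows essentially the same route as the paper: the paper proves this statement by specializing its general framework (Theorem 3.1 with $\mu_n=2/\alpha_n$ and $\theta_n\equiv 0$, via Theorems 4.1--4.2), and the proof of Theorem 3.1 consists of exactly your steps --- $F(T)\subset C_n\cap Q_n$ by induction, $x_n=P_{Q_n}x_0$ forcing $\|x_n-x_0\|$ to be nondecreasing and bounded, $\|x_{n+1}-x_n\|\to 0$, hence $\|x_n-Tx_n\|\to 0$ from $x_{n+1}\in C_n$, then demiclosedness and the Kadec--Klee-type finish (the paper's Lemma 2.4). The only difference is organizational: you inline the argument rather than factoring it through the generalized CQ theorem.
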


\begin{remark}
It is also known as CQ method or CQ method. The purpose of the
authors is to modify Mann's iteration process and obtain a strong
convergent sequence. However, we can learn more from (1.8). In fact,
(1.8) is equivalent to

\begin{equation}
\left\{\begin{array}{l}C_n=\{z\in C: \|z-((1-\alpha_n) x_n+\alpha_n Tx_n)\|\leq\|z-x_n\|\}\\
Q_n=\{z\in C: \langle z-x_n,x_n-x_0\rangle\geq 0\}\\
x_{n+1}=P_{C_n\cap Q_n}x_0
\end{array}\right.
\end{equation}

From (1.9) we can conclude that in each recursive step, the
algorithms can be divided into two parts:

($P_1$) construct an appropriate set;

($P_2$) project the given fixed point onto the set.

According to this point view, the crux of CQ method is how to
construct an appropriate set and (1.9) is just a special case:

($A_1$) construct $C_n$ based on iteration scheme (1.6) and the
properties of the mapping $T$.

($A_2$) construct $Q_n$ by the property of the metric projection.

$C_n\cap Q_n$ is the appropriate set. Then, together with ($P_2$),
we can yield (1.9), i.e., Theorem 1.1.

Actually, based on this idea we can accomplish (P1) in many ways and
construct different kinds of appropriate sets based on scheme (1.5),
(1.6), (1.7) and their combinations. And we name this method as {\it
generalized CQ method}.
\end{remark}

Motivated by Remark 1.1, we propose a CQ algorithm framework, which
is the basic work in this paper. Then, based on this framework, we
introduce a series of strong convergence theorems. Some of them,
used to be given separately, have direct relations between each
other.

In section 8, we study the relations among CQ method, monotone Q
method, monotone C method and monotone CQ method.

\section{Preliminaries and lemmas}

Let $H$ be a real Hilbert space with inner product
$\langle\cdot,\cdot\rangle$ and let $C$ be a closed convex subset of
$H$. For every point $x\in H$ there exists a unique nearest point in
$C$, denoted by $P_Cx$ such that
$$
\|x-P_Cx\|\leq\|x-y\|
$$
for all $y\in C$, where $P_C$ is called the metric projection of $H$
onto $C$. We know that $P_C$ is a nonexpansive mapping.

$x_n\rightarrow x$ implies that $\{x_n\}$ converges strongly to $x$.
$x_n\rightharpoonup x$ means ${x_n}$ converges weakly to $x$.

We know that a Hilbert space $H$ satisfies Opial's condition
\cite{7}, that is, for any sequence $\{x_n\}\subset H$ with
$x_n\rightharpoonup x$, the inequality
$$
\liminf_{n\rightarrow\infty}\|x_n-x\|<\liminf_{n\rightarrow\infty}\|x_n-y\|
$$
holds for every $y\in H$ with $y\neq x$. We also know that $H$ has
the Kadec-Klee property, that is $x_n\rightharpoonup x$ and
$\|x_n\|\rightarrow\|x\|$ imply $x_n\rightarrow x$. In fact, from
$\|x_n-x\|^2=\|x_n\|^2-2\langle x_n,x\rangle+\|x\|^2$, we get that a
Hilbert space has Kadec-Klee property.

For a given sequence $\{x_n\}\subset C$, let $\omega_w(x_n)=\{x:
\exists x_{n_j}\rightharpoonup x\}$ denote the weak limit set of
$\{x_n\}$

Now we collect some lemmas which will be used in the proof of main
theorems in following sections.

\begin{lemma}[see \cite{5}]
Let $H$ be a real Hilbert space. There hold the following
identities:\\
(i) $\|x-y\|^2=\|x\|^2-\|y\|^2-2\langle x-y,y\rangle, \ \forall
x,y\in H$\\
(ii) $\|\alpha
x+(1-\alpha)y\|^2=\alpha\|x\|^2+(1-\alpha)\|y\|^2-\alpha(1-\alpha)\|x-y\|^2,
\ \forall\alpha\in [0,1] \ and \ x,y\in H$.
\end{lemma}

\begin{lemma}
Let $C$ be a closed convex subset of real Hilbert space $H$. Given
$x\in H$ and $z\in C$. Then $z=P_C x$ if and only if there holds the
relation
$$
\langle x-z,y-z\rangle\leq 0
$$
for all $y\in C$.
\end{lemma}

\begin{lemma}[see \cite{9}]
Let $H$ be a real Hilbert space. Given a closed convex subset $C$
and points $x,y,z\in H$. Given also a real number $a\in\mathbb{R}$.
The set
$$
\{v\in C:\|y-v\|^2\leq\|x-v\|^2+\langle z,v\rangle+a\}
$$
is closed and convex.
\end{lemma}

\begin{lemma}[see \cite{9}]
Let $C$ be a closed convex subset of real Hilbert space $H$. Let
$\{x_n\}$ be a sequence in $H$ and $u\in H$. Let $q=P_C u$. If
$\omega_w(x_n)\subset C$ and
$$
\|x_n-u\|\leq\|u-q\|
$$
for all $n$, then $x_n\rightarrow q$.
\end{lemma}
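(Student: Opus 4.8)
The plan is to exploit two ingredients already available in the preliminaries: the uniqueness of the nearest point $P_C u$ and the Kadec--Klee property of $H$. The hypothesis $\|x_n-u\|\le\|u-q\|$ does double duty. First, it forces $\{x_n\}$ to be bounded, since $\|x_n\|\le\|x_n-u\|+\|u\|\le\|u-q\|+\|u\|$, so that $\omega_w(x_n)$ is nonempty and weak subsequential limits are available to work with; later the same bound will pin down the limiting value of the norm.

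The first real step is to identify every weak subsequential limit. Let $x_{n_j}\rightharpoonup x^*$ be any weakly convergent subsequence; then $x^*\in\omega_w(x_n)\subset C$ by hypothesis. Using the weak lower semicontinuity of the norm in a Hilbert space, I would estimate
\[
\|x^*-u\|\le\liminf_{j\to\infty}\|x_{n_j}-u\|\le\|u-q\|.
\]
On the other hand, $q=P_C u$ is by definition the unique point of $C$ minimizing the distance to $u$, so $\|x^*-u\|\ge\|u-q\|$ because $x^*\in C$. Hence $\|x^*-u\|=\|u-q\|$, and the uniqueness of the nearest point (equivalently, the variational characterization in Lemma 2.2) gives $x^*=q$. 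Since this holds for every weakly convergent subsequence and $\{x_n\}$ is bounded, the whole sequence satisfies $x_n\rightharpoonup q$.

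It then remains to upgrade weak convergence to strong convergence. Applying weak lower semicontinuity once more to the full sequence yields $\|u-q\|\le\liminf_{n\to\infty}\|x_n-u\|$, while the hypothesis gives $\limsup_{n\to\infty}\|x_n-u\|\le\|u-q\|$; together these force $\|x_n-u\|\to\|u-q\|$. At this point I would invoke the Kadec--Klee property recorded in Section 2: from $x_n-u\rightharpoonup q-u$ together with $\|x_n-u\|\to\|q-u\|$ it follows that $x_n-u\to q-u$ strongly, that is, $x_n\to q$.

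The routine parts are the boundedness estimate and the norm bookkeeping in the last paragraph. The one genuinely substantive step is the identification of the weak limit, where weak lower semicontinuity of the norm combines with the minimality and uniqueness built into the projection $P_C u$ to force $x^*=q$; everything else follows mechanically from Kadec--Klee.
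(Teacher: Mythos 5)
Your proof is correct. The paper does not actually prove this lemma---it is quoted from reference \cite{9}---but your argument is the standard one and uses precisely the machinery the paper records in its preliminaries: weak lower semicontinuity of the norm to show every weak subsequential limit $x^*\in C$ satisfies $\|x^*-u\|\le\|u-q\|$, hence $x^*=q$ by uniqueness of the projection, followed by the Kadec--Klee property to upgrade $x_n\rightharpoonup q$ and $\|x_n-u\|\to\|u-q\|$ to strong convergence. The only cosmetic difference from the proof in \cite{9} is the last step, where that source bypasses Kadec--Klee by expanding
$$
\|x_n-q\|^2=\|x_n-u\|^2+2\langle x_n-u,u-q\rangle+\|u-q\|^2\leq 2\left(\|u-q\|^2+\langle x_n-u,u-q\rangle\right)\rightarrow 0,
$$
but both routes are equally valid and rest on the same identification of the weak limit.
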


\begin{lemma}[see \cite{11}]
Let $C$ be a nonempty closed convex subset of a real Hilbert space
$H$ and $T:C\rightarrow C$ a demi-continuous pseudo-contractive
self-mapping from $C$ into itself. Then $F(T)$ is a closed convex
subset of $C$ and $I-T$ is demiclosed at zero.
\end{lemma}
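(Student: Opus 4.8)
The plan is to reduce everything to the observation that pseudo-contractivity of $T$ is precisely monotonicity of the operator $A:=I-T$. Using the Hilbert-space reformulation (1.4), for all $x,y\in C$ one has
\[
\langle Ax-Ay,x-y\rangle=\|x-y\|^2-\langle Tx-Ty,x-y\rangle\geq 0,
\]
so $A$ is monotone; moreover $A$ is demi-continuous because $T$ is and the identity is continuous. Both assertions of the lemma will then follow from a single variational-inequality (Minty-type) description of $F(T)$.

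First I would establish the characterization
\[
p\in F(T)\iff \langle Az,z-p\rangle\geq 0\quad\text{for all }z\in C.
\]
The forward implication is immediate from monotonicity together with $Ap=0$. For the converse I would use \emph{Minty's trick}: given $p$ satisfying the right-hand side and any $w\in C$, set $x_t=(1-t)p+tw\in C$ for $t\in(0,1]$, insert $x_t$ into the inequality, divide by $t>0$, and let $t\to 0^+$. Since $x_t\to p$ strongly and $A$ is demi-continuous, $Ax_t\rightharpoonup Ap$, which yields $\langle Ap,w-p\rangle\geq 0$ for every $w\in C$. Choosing $w=Tp\in C$ gives $w-p=-Ap$, hence $-\|Ap\|^2\geq 0$ and therefore $Ap=0$. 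This Minty step, and in particular the correct use of demi-continuity to pass to the weak limit, is the part I expect to require the most care.

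Once the characterization is in hand, closedness and convexity of $F(T)$ come essentially for free: for each fixed $z\in C$ the condition $\langle Az,z-p\rangle\geq 0$ is an affine inequality in $p$ and thus defines a closed half-space, so $F(T)$, being the intersection of $C$ with all these half-spaces over $z\in C$, is closed and convex.

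Finally, for demiclosedness at zero, suppose $x_n\rightharpoonup x$ and $Ax_n\to 0$. Since $C$ is convex and closed, it is weakly closed, so $x\in C$. For any fixed $z\in C$, monotonicity gives $\langle Az,z-x_n\rangle\geq\langle Ax_n,z-x_n\rangle$; the right-hand side tends to $0$ because $\{x_n\}$ is bounded and $Ax_n\to 0$ strongly, while the left-hand side tends to $\langle Az,z-x\rangle$ by weak convergence. Hence $\langle Az,z-x\rangle\geq 0$ for all $z\in C$, and the characterization above forces $x\in F(T)$, i.e. $(I-T)x=0$, which is exactly demiclosedness of $I-T$ at zero.
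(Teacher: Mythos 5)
Your argument is correct. Note, however, that the paper itself offers no proof of this lemma at all: it is quoted verbatim from reference [11] (Zhou), so there is no internal argument to compare yours against. Your self-contained route is the standard one and every step checks out: pseudo-contractivity in the form (1.4) is exactly monotonicity of $A=I-T$; the Minty trick with $x_t=(1-t)p+tw$ correctly uses demi-continuity of $T$ (hence of $A$) to pass from $\langle Ax_t,w-p\rangle\geq 0$ to $\langle Ap,w-p\rangle\geq 0$, and the choice $w=Tp\in C$ (legitimate since $T$ maps $C$ into $C$) closes the loop; the resulting variational characterization of $F(T)$ as an intersection of closed half-spaces with $C$ gives closedness and convexity; and the demiclosedness argument correctly exploits boundedness of the weakly convergent sequence to kill the term $\langle Ax_n,z-x_n\rangle$. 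The one point worth making explicit in a final write-up is the verification that (1.3) and (1.4) are indeed equivalent (a one-line expansion of $\|(I-T)x-(I-T)y\|^2$), since that identity is what converts the definition of pseudo-contractivity into monotonicity of $I-T$.
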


\begin{lemma}[see \cite{5}]
Let $C$ be a nonempty closed convex subset of $H$ and
$T:C\rightarrow C$ a $\kappa-$strict pseudo-contraction for some
$0\leq\kappa<1$. Then $F(T)$ is a closed convex subset of $C$ and
$I-T$ is demiclosed at zero.
\end{lemma}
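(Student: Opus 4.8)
The plan is to extract from the defining inequality (1.2) a single monotonicity estimate and let it drive all three assertions. Writing $A=I-T$ and expanding $\|Tx-Ty\|^2=\|(x-y)-(Ax-Ay)\|^2$ in (1.2), the condition of being a $\kappa$-strict pseudo-contraction is readily seen to be equivalent to
\[
\langle Ax-Ay,\,x-y\rangle\geq\frac{1-\kappa}{2}\,\|Ax-Ay\|^2,\qquad\forall x,y\in C.
\]
I would establish this reformulation first, since two consequences follow at once: by Cauchy--Schwarz it gives $\|Ax-Ay\|\leq\frac{2}{1-\kappa}\|x-y\|$, so $A$, and hence $T$, is Lipschitz continuous; and its nonnegative right-hand side says $A=I-T$ is monotone.

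Closedness of $F(T)$ then follows from continuity alone: if $x_n\in F(T)$ and $x_n\to x$, then $Tx_n=x_n\to x$ while $Tx_n\to Tx$, forcing $Tx=x$. For convexity I would take $x,y\in F(T)$, set $z=tx+(1-t)y$ with $t\in[0,1]$, and show $\|Tz-z\|=0$. The key step is to write $Tz-z=t(Tz-x)+(1-t)(Tz-y)$ and apply Lemma 2.1(ii), which contributes a term $-t(1-t)\|x-y\|^2$; bounding $\|Tz-x\|^2=\|Tz-Tx\|^2$ and $\|Tz-y\|^2=\|Tz-Ty\|^2$ through (1.2), using $(I-T)x=(I-T)y=0$ together with $\|z-x\|^2=(1-t)^2\|x-y\|^2$ and $\|z-y\|^2=t^2\|x-y\|^2$, the $\|x-y\|^2$ contributions cancel exactly and one is left with $\|Tz-z\|^2\leq\kappa\|Tz-z\|^2$. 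Since $\kappa<1$, this forces $Tz=z$.

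For demiclosedness at zero I would assume $x_n\rightharpoonup x$ and $(I-T)x_n\to 0$, aiming to conclude $(I-T)x=0$. Applying the monotonicity estimate to the pair $x_n,x$ gives
\[
\langle (I-T)x_n-(I-T)x,\,x_n-x\rangle\geq\frac{1-\kappa}{2}\,\|(I-T)x_n-(I-T)x\|^2.
\]
The left-hand side tends to $0$: splitting it as $\langle (I-T)x_n,x_n-x\rangle-\langle (I-T)x,x_n-x\rangle$, the first piece vanishes because $(I-T)x_n\to 0$ strongly against the bounded factor $x_n-x$, and the second vanishes because $x_n-x\rightharpoonup 0$ is paired with the fixed vector $(I-T)x$. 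Hence the right-hand side tends to $0$, and since $(I-T)x_n\to 0$ yields $\|(I-T)x_n-(I-T)x\|\to\|(I-T)x\|$, we obtain $(I-T)x=0$. (Alternatively one can argue against Opial's condition, available in $H$, but the monotone reformulation makes the cross-term bookkeeping cleaner.)

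I expect demiclosedness to be the main obstacle, precisely because it mixes weak and strong convergence: the care lies in verifying that each half of $\langle (I-T)x_n-(I-T)x,\,x_n-x\rangle$ genuinely goes to zero --- one half by strong convergence of $(I-T)x_n$ against a bounded factor, the other by weak convergence of $x_n-x$ against a fixed functional --- after which the coercivity constant $\frac{1-\kappa}{2}>0$ does the rest.
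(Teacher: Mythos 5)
Your proof is correct. Note that the paper does not prove this lemma at all --- it is imported verbatim from \cite{5} (Marino--Xu) --- so there is no internal proof to compare against; what you have written is essentially the standard argument from that reference: the reformulation $\langle (I-T)x-(I-T)y,x-y\rangle\geq\frac{1-\kappa}{2}\|(I-T)x-(I-T)y\|^2$ (inverse strong monotonicity of $I-T$), which yields Lipschitz continuity and hence closedness of $F(T)$, and the convex-combination identity of Lemma 2.1(ii) with the exact cancellation of the $\|x-y\|^2$ terms, which yields convexity. Your demiclosedness argument is a clean variant: the cited source runs the argument through Opial's condition, whereas you pass to the limit directly in the monotonicity inequality, using strong convergence of $(I-T)x_n$ against the bounded factor $x_n-x$ and weak convergence of $x_n-x$ against the fixed vector $(I-T)x$; both routes are valid, and yours avoids the contradiction setup at the cost of relying on the coercivity constant $\frac{1-\kappa}{2}>0$, which is exactly where the hypothesis $\kappa<1$ enters.
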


\begin{lemma}
Let $C$ be a nonempty closed convex subset of a real Hilbert space
$H$ and $T$ an $L-$Lipschitz mapping from $C$ into itself. Assume
$F(T)\neq\O$ is closed and convex, $L+1\leq\mu<\infty$ and
$\theta\geq 0$. Let
$$
C_x=\{z\in C: \|x-Tx\|\leq\mu\|x-z\|+\theta\}, \ \forall x\in C.
$$
Then, $F(T)\subset C_x$.
\end{lemma}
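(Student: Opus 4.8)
The plan is to argue pointwise: fix an arbitrary $x\in C$, take any $p\in F(T)$, and show directly that $p$ satisfies the inequality defining $C_x$, namely $\|x-Tx\|\leq\mu\|x-p\|+\theta$. Since this holds for every $p\in F(T)$, we conclude $F(T)\subset C_x$. Because $\theta\geq 0$ only enlarges the right-hand side, the essential task reduces to establishing the sharper bound $\|x-Tx\|\leq\mu\|x-p\|$, after which the term $+\theta$ is harmless.

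The key step is a triangle-inequality decomposition routed through the fixed point $p$. I would write
$$
\|x-Tx\|=\|(x-p)+(p-Tx)\|\leq\|x-p\|+\|p-Tx\|.
$$
Now I exploit that $p$ is fixed, i.e.\ $p=Tp$, so that $\|p-Tx\|=\|Tp-Tx\|$, and then invoke the $L$-Lipschitz property of $T$ to obtain $\|Tp-Tx\|\leq L\|p-x\|=L\|x-p\|$. Substituting this into the previous line yields
$$
\|x-Tx\|\leq\|x-p\|+L\|x-p\|=(1+L)\|x-p\|.
$$

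The final step is simply to invoke the hypothesis $L+1\leq\mu$, giving $(1+L)\|x-p\|\leq\mu\|x-p\|\leq\mu\|x-p\|+\theta$, which is exactly the membership condition $p\in C_x$. I do not anticipate any genuine obstacle here: the argument is a single application of the triangle inequality and the Lipschitz bound, and it is worth remarking that the assumptions that $F(T)$ be closed and convex, while recorded in the statement, play no role in this particular containment; they are presumably needed for later use of $C_x$ in the CQ-type constructions. The role of each quantitative hypothesis is transparent—the Lipschitz constant controls $\|Tp-Tx\|$, the slack $\mu-(L+1)\geq 0$ absorbs the constant $1+L$, and $\theta\geq 0$ provides additional room—so the proof should be short and direct.
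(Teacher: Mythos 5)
Your proposal is correct and is essentially identical to the paper's own proof: the same triangle-inequality split through $p$, the same use of $p=Tp$ with the $L$-Lipschitz bound, and the same absorption of $L+1$ into $\mu$ with $\theta\geq 0$ as slack. Your side remark that closedness and convexity of $F(T)$ are not used in this containment is also accurate.
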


\begin{proof}
Let $p\in F(T)$, we have $\forall x\in C$
$$
\aligned \|x-Tx\|&\leq\|x-p\|+\|p-Tx\|\\
&\leq\|x-p\|+L\|x-p\|\\
&=(L+1)\|x-p\|\\
&\leq\mu\|x-p\|+\theta.
\endaligned
$$
Hence, $p\in C_x$, i.e., $F(T)\subset C_x$.

\end{proof}

\section{Main result}

In this section, a strong convergence theorem is obtained by
generalized CQ method.

Using Lemma 2.7, we can introduce the following theorem.

\begin{theorem}
Let $C$ be a nonempty closed convex subset of a real Hilbert space
$H$ and $T$ an $L-$Lipschitz mapping from $C$ into itself. Assume
$F(T)\neq\O$ is closed and convex, $I-T$ is demiclosed at zero,
$\{\mu_n\}$ is a sequence such that $L+1\leq\mu_n\leq\mu<\infty$ and
$\theta_n(z)$ is a nonnegative function on $C$. Let $\{x_n\}$ be a
sequence generated by the following manner:
\begin{equation}
\left\{\begin{array}{l}x_0\in C \ chosen \ arbitrarily\\
C_n=\{z\in C: \|x_n-T x_n\|\leq\mu_n\|x_n-z\|+\theta_n(z)\}\\
Q_n=\{z\in C: \langle z-x_n,x_n-x_0\rangle\geq 0\}\\
x_{n+1}=P_{C_n^*\cap Q_n}x_0
\end{array}\right.
\end{equation}
where $C_n^*$ is a closed convex set with $F(T)\subset C_n^*\subset
C_n$. Assume $\lim_{n\rightarrow\infty}\theta_n(x_{n+1})=0$. Then
$\{x_n\}$ converges strongly to $P_{F(T)}x_0$.
\end{theorem}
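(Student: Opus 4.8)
The plan is to run the standard CQ/projection argument, checking that the abstraction introduced here (an arbitrary closed convex set $C_n^*$ sandwiched between $F(T)$ and $C_n$, a bounded sequence $\{\mu_n\}$, and a nonnegative functional $\theta_n$) still drives the scheme. First I would establish that $\{x_n\}$ is well defined, i.e. that $C_n^*\cap Q_n$ is a nonempty closed convex set at every stage, so the projection $x_{n+1}=P_{C_n^*\cap Q_n}x_0$ exists. Closedness and convexity are immediate: $C_n^*$ is closed and convex by hypothesis and $Q_n$ is a half-space intersected with $C$. Nonemptiness I would get by proving $F(T)\subset C_n^*\cap Q_n$ for all $n$ by induction. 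The inclusion $F(T)\subset C_n^*$ is assumed, while $F(T)\subset Q_n$ follows inductively: $Q_0=C\supset F(T)$, and assuming $x_{n+1}=P_{C_n^*\cap Q_n}x_0$, Lemma 2.2 gives $\langle x_0-x_{n+1},p-x_{n+1}\rangle\le 0$ for every $p\in C_n^*\cap Q_n$, hence for every $p\in F(T)$; rearranging yields $\langle p-x_{n+1},x_{n+1}-x_0\rangle\ge 0$, which is exactly $p\in Q_{n+1}$.

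Next I would extract boundedness and monotonicity. Writing $q=P_{F(T)}x_0$, the inclusion $q\in C_n^*\cap Q_n$ and the nearest-point property of the projection give $\|x_{n+1}-x_0\|\le\|q-x_0\|$, so $\{x_n\}$ is bounded and stays within distance $\|q-x_0\|$ of $x_0$. The key monotonicity comes from observing, via Lemma 2.2, that $x_n=P_{Q_n}x_0$ (for $z\in Q_n$ one has $\langle x_0-x_n,z-x_n\rangle=-\langle z-x_n,x_n-x_0\rangle\le 0$); since $x_{n+1}\in Q_n$ this yields $\|x_n-x_0\|\le\|x_{n+1}-x_0\|$. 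Thus $\{\|x_n-x_0\|\}$ is nondecreasing and bounded above, hence convergent, and then Lemma 2.1(i) together with $\langle x_{n+1}-x_n,x_n-x_0\rangle\ge 0$ gives $\|x_{n+1}-x_n\|^2\le\|x_{n+1}-x_0\|^2-\|x_n-x_0\|^2\to 0$, i.e. $\|x_{n+1}-x_n\|\to 0$.

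The decisive step is to convert this into $\|x_n-Tx_n\|\to 0$, and this is where the inclusion $C_n^*\subset C_n$ is used: since $x_{n+1}\in C_n^*\subset C_n$, the defining inequality of $C_n$ applied at $z=x_{n+1}$ reads $\|x_n-Tx_n\|\le\mu_n\|x_n-x_{n+1}\|+\theta_n(x_{n+1})$. Because $\mu_n\le\mu$ is bounded, $\|x_n-x_{n+1}\|\to 0$, and $\theta_n(x_{n+1})\to 0$ by hypothesis, the right-hand side tends to $0$. Finally, since $\{x_n\}\subset C$ and $C$ is weakly closed, any weak cluster point $\hat x$ lies in $C$; from $(I-T)x_n\to 0$ and the demiclosedness of $I-T$ at zero we obtain $\hat x\in F(T)$, so $\omega_w(x_n)\subset F(T)$. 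Combined with $\|x_n-x_0\|\le\|x_0-q\|$ for all $n$, Lemma 2.4 (taking $u=x_0$) then upgrades weak subsequential convergence to strong convergence, giving $x_n\to q=P_{F(T)}x_0$.

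I expect the only genuine subtlety to be the step $\|x_n-Tx_n\|\to 0$, since it is the one place where all of the nonstandard ingredients must cooperate: the inclusion $C_n^*\subset C_n$ transfers the $C_n$-inequality to the iterate $x_{n+1}$, the uniform bound $\mu_n\le\mu$ controls the Lipschitz factor, and the hypothesis $\theta_n(x_{n+1})\to 0$ (an evaluation of $\theta_n$ along the iterates rather than a uniform estimate) eliminates the residual term. Everything else is the usual projection machinery; note that demiclosedness of $I-T$ is assumed outright, so no fixed-point-theoretic work on $T$ is required, and it is Lemma 2.7 (using the nonnegativity of $\theta_n$) that makes $F(T)\subset C_n$, and hence the sandwich $F(T)\subset C_n^*\subset C_n$, possible in the first place.
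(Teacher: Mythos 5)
Your proposal is correct and follows essentially the same route as the paper's own proof: induction to get $F(T)\subset C_n^*\cap Q_n$, the identification $x_n=P_{Q_n}x_0$ to obtain monotonicity of $\|x_n-x_0\|$ and hence $\|x_{n+1}-x_n\|\to 0$, the membership $x_{n+1}\in C_n$ to force $\|x_n-Tx_n\|\le\mu_n\|x_n-x_{n+1}\|+\theta_n(x_{n+1})\to 0$, and demiclosedness plus Lemma 2.4 to conclude. The only (immaterial) difference is that you get boundedness by comparing $x_{n+1}$ directly with $q=P_{F(T)}x_0\in C_n^*\cap Q_n$, whereas the paper derives $\|x_0-x_n\|\le\|x_0-p\|$ from the variational characterization of $x_n=P_{Q_n}x_0$.
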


\begin{proof}
According to the assumption, we see that $P_{F(T)}x_0$ is well
defined. It is obvious that $Q_n$ is closed and convex, hence,
$C_n^*\cap Q_n$ is closed and convex.

Next, we show that $F(T)\subset C_n^*\cap Q_n$. From the assumption,
$F(T)\subset C_n^*$, hence, it suffices to prove $F(T)\subset Q_n$.
We prove this by induction. For $n=0$, we have $F(T)\subset C=Q_0$.
Assume that $F(T)\subset Q_n$. Since $x_{n+1}$ is the projection of
$x_0$ onto $C_n^*\cap Q_n$, we have
$$
\langle z-x_{n+1},x_{n+1}-x_0\rangle\geq 0, \forall z\in C_n^*\cap
Q_n.
$$
As $F(T)\subset C_n^*\cap Q_n$ by the induction assumption, the last
inequality holds, in particular, for all $z\in F(T)$. This together
with the definition of $Q_{n+1}$ implies that $F(T)\subset Q_{n+1}$.
Hence, $F(T)\subset Q_n$ holds for all $n\geq 0$ and $\{x_n\}$ is
well defined.

From $x_n=P_{Q_n}x_0$, we have
$$
\langle x_0-x_n,x_n-y\rangle\geq 0
$$
for all $y\in C_n^*\cap Q_n$. So, for $p\in F(T)$, we have
$$
\aligned 0&\leq\langle x_0-x_n,x_n-p\rangle\\
&=\langle x_0-x_n,x_n-x_0+x_0-p\rangle\\
&=-\|x_0-x_n\|^2+\langle x_0-x_n,x_0-p\rangle\\
&\leq-\|x_0-x_n\|^2+\|x_0-x_n\|\cdot\|x_0-p\|.
\endaligned
$$
Hence,
\begin{equation}
\|x_0-x_n\|\leq\|x_0-p\|
\end{equation}
for all $p\in F(T)$. This implies that $\{x_n\}$ is bounded.

From $x_n=P_{Q_n}x_0$ and $x_{n+1}=P_{C_n^*\cap Q_n}x_0\in C_n^*\cap
Q_n$, we have
$$
\langle x_0-x_n,x_n-x_{n+1}\rangle\geq 0.
$$
Hence,
$$
\aligned 0&\leq\langle x_0-x_n,x_n-x_{n+1}\rangle\\
&=\langle x_0-x_n,x_n-x_0+x_0-x_{n+1}\rangle\\
&=-\|x_0-x_n\|^2+\langle x_0-x_n,x_0-x_{n+1}\rangle\\
&\leq-\|x_0-x_n\|^2+\|x_0-x_n\|\cdot\|x_0-x_{n+1}\|,
\endaligned
$$
therefore
$$
\|x_0-x_n\|\leq\|x_0-x_{n+1}\|,
$$
which implies that $\lim_{n\rightarrow\infty}\|x_n-x_0\|$ exists.

Besides, by Lemma 2.1 we have
$$
\aligned \|x_{n+1}-x_n\|^2&=\|(x_{n+1}-x_0)-(x_n-x_0)\|^2\\
&=\|x_{n+1}-x_0\|^2-\|x_n-x_0\|^2-2\langle x_{n+1}-x_n,x_n-x_0\rangle\\
&\leq\|x_{n+1}-x_0\|^2-\|x_n-x_0\|^2.
\endaligned
$$
Let $n\rightarrow\infty$, we get
$\lim_{n\rightarrow\infty}\|x_{n+1}-x_n\|=0$.

Noticing $x_{n+1}=P_{C_n^*\cap Q_n}x_0\subset C_n$, we have
$$
\|x_n-T x_n\|\leq\mu_n\|x_n-x_{n+1}\|+\theta_n(x_{n+1}).
$$
Combining with the assumption of $\{\mu_n\}$ and
$\{\theta_n(x_{n+1})\}$, we obtain
$$
\lim_{n\rightarrow\infty}\|x_n-T x_n\|=0.
$$
Since $I-T$ is demiclosed, then every weak limit point of $\{x_n\}$
is a fixed point of $T$. That is, $\omega_w{(x_n)}\subset F(T)$. By
Lemma 2.4, $x_n\rightarrow P_{F(T)}x_0$.
\end{proof}

\section{Applications of the main result}

First, we use Theorem 3.1 to prove Theorem 1.1.

Obviously, the following theorem can be easily verified by Theorem
3.1.

\begin{theorem}
Let $C$ be a nonempty closed convex subset of a Hilbert space $H$
and $T$ a nonexpansive mapping of \ $C$ into itself such that
$F(T)\neq \O$. Suppose $x_0\in C$ chosen arbitrarily and $\{x_n\}$
is given by
$$
\left\{\begin{array}{l}y_n=(1-\alpha_n) x_n+\alpha_n Tx_n\\
{^*C_n}=\{z\in C:\|x_n-Tx_n\|\leq\frac{2}{\alpha_n}\|x_n-z\|\}\\
Q_n=\{z\in C: \langle z-x_n,x_n-x_0\rangle\geq 0\}\\
x_{n+1}=P_{C_n^{**}\cap Q_n}x_0
\end{array}\right.
$$
where $C_n^{**}$ is a closed convex set with $F(T)\subset
C_n^{**}\subset {^*C_n}$ and $\{\alpha_n\}$ is chosen such that
$0<\alpha\leq\alpha_n\leq 1$. Then, $\{x_n\}$ converges strongly to
$P_{F(T)}x_0$.
\end{theorem}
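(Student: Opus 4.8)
The plan is to reduce Theorem 4.1 to an application of Theorem 3.1 by checking that the nonexpansive setting fits the hypotheses of the general theorem with an appropriate choice of parameters. The key observation is that a nonexpansive mapping is $1$-Lipschitz, so $L=1$ and the condition $L+1\le\mu_n$ becomes $2\le\mu_n$. With the choice $\mu_n=\tfrac{2}{\alpha_n}$ and the lower bound $0<\alpha\le\alpha_n\le 1$, we get $\mu_n=\tfrac{2}{\alpha_n}\ge 2=L+1$ and $\mu_n=\tfrac{2}{\alpha_n}\le\tfrac{2}{\alpha}<\infty$, so the boundedness requirement $L+1\le\mu_n\le\mu<\infty$ is satisfied with $\mu=\tfrac{2}{\alpha}$. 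I would also take $\theta_n(z)\equiv 0$, which is a nonnegative function on $C$ with $\lim_{n\to\infty}\theta_n(x_{n+1})=0$ trivially.

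The next step is to verify the demiclosedness hypothesis of Theorem 3.1. Since $T$ is nonexpansive, it is in particular a $0$-strictly pseudo-contractive mapping (take $\kappa=0$ in (1.2)), so Lemma 2.6 applies and guarantees both that $F(T)$ is closed and convex and that $I-T$ is demiclosed at zero. This is exactly what Theorem 3.1 requires, so all structural hypotheses on $T$ and $F(T)$ are met. With these identifications, the set
$$
{^*C_n}=\Bigl\{z\in C:\|x_n-Tx_n\|\le\tfrac{2}{\alpha_n}\|x_n-z\|\Bigr\}
$$
is precisely the set $C_n$ of Theorem 3.1 under the substitutions $\mu_n=\tfrac{2}{\alpha_n}$ and $\theta_n\equiv 0$, and $C_n^{**}$ plays the role of $C_n^{*}$, the intermediate closed convex set sandwiched as $F(T)\subset C_n^{**}\subset {^*C_n}$.

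**The main point requiring care** is confirming that the hypotheses on $C_n^{*}$ in Theorem 3.1 are genuinely available here, namely that such a closed convex $C_n^{**}$ with $F(T)\subset C_n^{**}\subset {^*C_n}$ exists. By Lemma 2.7 (applied with $\mu=\mu_n=\tfrac2{\alpha_n}\ge L+1$ and $\theta=0$), we already have $F(T)\subset {^*C_n}$, so the inclusion $F(T)\subset {^*C_n}$ holds automatically; and by Lemma 2.3 the set ${^*C_n}$ is itself closed and convex, so one may simply take $C_n^{**}={^*C_n}$ if no smaller set is prescribed. The auxiliary vector $y_n=(1-\alpha_n)x_n+\alpha_n Tx_n$ appears in the statement but plays no role in the recursion itself—it is a vestige of the Mann scheme that motivates the factor $\tfrac{2}{\alpha_n}$, and I expect the slight awkwardness will be in noting that $y_n$ is decorative rather than constraining. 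Once these checks are in place, the definition (4.1) coincides with the scheme (3.1) of Theorem 3.1, and the conclusion $x_n\to P_{F(T)}x_0$ follows directly.
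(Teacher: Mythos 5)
Your reduction to Theorem 3.1 (with $L=1$, $\mu_n=2/\alpha_n\in[2,2/\alpha]$, $\theta_n\equiv 0$, demiclosedness from Lemma 2.6 with $\kappa=0$) is exactly what the paper intends: its entire proof is the remark that the theorem ``can be easily verified by Theorem 3.1,'' and you have correctly filled in that verification. One small caveat: your aside that ${^*C_n}$ is closed and convex by Lemma 2.3, so that one could take $C_n^{**}={^*C_n}$, is wrong --- ${^*C_n}=\{z\in C:\|x_n-z\|\geq\tfrac{\alpha_n}{2}\|x_n-Tx_n\|\}$ is the complement of an open ball intersected with $C$, hence closed but not convex in general, and it is not of the form covered by Lemma 2.3; this does not affect the argument, since the existence of a closed convex $C_n^{**}$ is a hypothesis of the theorem (and, e.g., $C_n^{**}=F(T)$ or the paper's $C_n=\{z:\|y_n-z\|\leq\|x_n-z\|\}$ witnesses it).
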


Let $C_n^{**}=C_n=\{z\in C:\|y_n-z\|\leq\|x_n-z\|\}$ in Theorem 4.1.
Easily, we can prove $C_n$ is closed and convex with $F(T)\subset
C_n\subset {^*C_n}$. So, Theorem 4.2 is valid.

\begin{theorem}
Let $C$ be a nonempty closed convex subset of a Hilbert space $H$
and $T$ a nonexpansive mapping of \ $C$ into itself such that
$F(T)\neq \O$. Suppose $x_0\in C$ chosen arbitrarily and $\{x_n\}$
is given by
$$
\left\{\begin{array}{l}y_n=(1-\alpha_n) x_n+\alpha_n Tx_n\\
C_n=\{z\in C:\|y_n-z\|\leq\|x_n-z\|\}\\
Q_n=\{z\in C: \langle z-x_n,x_n-x_0\rangle\geq 0\}\\
x_{n+1}=P_{C_n^{**}\cap Q_n}x_0
\end{array}\right.
$$
where $\{\alpha_n\}$ is chosen such that $0<\alpha\leq\alpha_n\leq
1$. Then, $\{x_n\}$ converges strongly to $P_{F(T)}x_0$.
\end{theorem}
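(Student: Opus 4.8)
The plan is to obtain the theorem as an immediate specialization of Theorem 4.1, taking $C_n^{**} = C_n = \{z \in C : \|y_n - z\| \leq \|x_n - z\|\}$. Because Theorem 4.1 already guarantees strong convergence to $P_{F(T)}x_0$ for \emph{any} closed convex set $C_n^{**}$ sandwiched as $F(T) \subset C_n^{**} \subset {^*C_n}$, the entire task reduces to verifying three structural facts about this concrete $C_n$; I will not need to redo any of the iteration or projection analysis.

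First I would confirm that $C_n$ is closed and convex. Squaring and expanding the defining inequality $\|y_n - z\|^2 \leq \|x_n - z\|^2$ cancels the $\|z\|^2$ terms and leaves the affine constraint $2\langle x_n - y_n, z \rangle \leq \|x_n\|^2 - \|y_n\|^2$, exhibiting $C_n$ as the intersection of $C$ with a half-space; alternatively this is a direct instance of Lemma 2.3. Next I would establish $F(T) \subset C_n$. For a fixed point $p$, writing $y_n - p = (1-\alpha_n)(x_n - p) + \alpha_n(Tx_n - Tp)$ and using convexity of the norm together with the nonexpansiveness of $T$ gives $\|y_n - p\| \leq (1-\alpha_n)\|x_n - p\| + \alpha_n\|x_n - p\| = \|x_n - p\|$, so $p \in C_n$.

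The one step carrying genuine content is the inclusion $C_n \subset {^*C_n}$, where ${^*C_n} = \{z : \|x_n - Tx_n\| \leq \frac{2}{\alpha_n}\|x_n - z\|\}$. The identity $x_n - y_n = \alpha_n(x_n - Tx_n)$ rewrites the target bound in terms of $\|x_n - y_n\|$, and then for any $z \in C_n$ the triangle inequality combined with $\|y_n - z\| \leq \|x_n - z\|$ yields $\|x_n - y_n\| \leq \|x_n - z\| + \|z - y_n\| \leq 2\|x_n - z\|$, i.e. $\|x_n - Tx_n\| = \frac{1}{\alpha_n}\|x_n - y_n\| \leq \frac{2}{\alpha_n}\|x_n - z\|$, so $z \in {^*C_n}$.

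With these three points in hand, Theorem 4.1 applies verbatim and delivers $x_n \to P_{F(T)}x_0$. I expect the triangle-inequality estimate in the third step to be the only place that requires an actual observation; everything else is routine checking against the framework already built in Theorem 4.1.
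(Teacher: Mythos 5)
Your proposal follows exactly the paper's own route: the paper derives this theorem from Theorem 4.1 by setting $C_n^{**}=C_n=\{z\in C:\|y_n-z\|\leq\|x_n-z\|\}$ and asserting (without detail) that $C_n$ is closed and convex with $F(T)\subset C_n\subset{^*C_n}$. Your three verifications --- the half-space description of $C_n$, the nonexpansiveness argument for $F(T)\subset C_n$, and the triangle-inequality estimate $\|x_n-y_n\|\leq 2\|x_n-z\|$ giving $C_n\subset{^*C_n}$ --- are all correct and simply fill in the steps the paper labels ``easily.''
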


Clearly, Theorem 4.2 is the same as Theorem 1.1.

Moreover, if $C_n^*$ is a closed convex set satisfies $F(T)\subset
C_n^*\subset C_n$, then, $F(T)\subset C_n^*\subset {^*C_n}$.
Therefore, we obtain the following theorem.

\begin{theorem}
Let $C$ be a nonempty closed convex subset of a Hilbert space $H$
and $T$ a nonexpansive mapping of \ $C$ into itself such that
$F(T)\neq \O$. Suppose $x_0\in C$ chosen arbitrarily and $\{x_n\}$
is given by
$$
\left\{\begin{array}{l}y_n=(1-\alpha_n) x_n+\alpha_n Tx_n\\
C_n=\{z\in C:\|y_n-z\|\leq\|x_n-z\|\}\\
Q_n=\{z\in C: \langle z-x_n,x_n-x_0\rangle\geq 0\}\\
x_{n+1}=P_{C_n^*\cap Q_n}x_0
\end{array}\right.
$$
where $C_n^*$ is a closed convex set with $F(T)\subset C_n^*\subset
C_n$ and $\{\alpha_n\}$ is chosen such that
$0<\alpha\leq\alpha_n\leq 1$. Then, $\{x_n\}$ converges strongly to
$P_{F(T)}x_0$.
\end{theorem}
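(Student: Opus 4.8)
The plan is to deduce this theorem directly from Theorem 4.1, by recognizing that the set $C_n^*$ provided here is an admissible choice for the auxiliary set $C_n^{**}$ appearing in that theorem. Theorem 4.1 is stated for the set ${^*C_n}=\{z\in C:\|x_n-Tx_n\|\leq\frac{2}{\alpha_n}\|x_n-z\|\}$ and requires only a closed convex $C_n^{**}$ with $F(T)\subset C_n^{**}\subset {^*C_n}$. Thus the whole task reduces to upgrading the hypothesis $F(T)\subset C_n^*\subset C_n$ to $F(T)\subset C_n^*\subset {^*C_n}$; once this is in hand, Theorem 4.1 applies with $C_n^{**}=C_n^*$ and yields the conclusion.

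The heart of the argument is therefore the single set inclusion $C_n\subset {^*C_n}$. First I would record the elementary identity $x_n-Tx_n=\frac{1}{\alpha_n}(x_n-y_n)$, which follows at once from $y_n=(1-\alpha_n)x_n+\alpha_n Tx_n$ (and is legitimate since $\alpha_n\geq\alpha>0$); taking norms gives $\|x_n-Tx_n\|=\frac{1}{\alpha_n}\|x_n-y_n\|$. Then, for any $z\in C_n$, i.e.\ any $z$ with $\|y_n-z\|\leq\|x_n-z\|$, the triangle inequality gives
\[
\|x_n-y_n\|\leq\|x_n-z\|+\|z-y_n\|\leq 2\|x_n-z\|.
\]
Combining the two displays yields $\|x_n-Tx_n\|\leq\frac{2}{\alpha_n}\|x_n-z\|$, that is $z\in {^*C_n}$, so indeed $C_n\subset {^*C_n}$.

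With this inclusion established, transitivity applied to the hypothesis $F(T)\subset C_n^*\subset C_n$ gives $F(T)\subset C_n^*\subset {^*C_n}$, and $C_n^*$ is closed and convex by assumption, so it is a legitimate $C_n^{**}$ for Theorem 4.1. It then remains to check that the Lipschitz data match: a nonexpansive $T$ is $1$-Lipschitz, so $L+1=2\leq\frac{2}{\alpha_n}=\mu_n$ (using $\alpha_n\leq 1$) and $\mu_n\leq\frac{2}{\alpha}=:\mu<\infty$ (using $\alpha_n\geq\alpha$), while the error function is identically zero so $\lim_{n\to\infty}\theta_n(x_{n+1})=0$ holds trivially. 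Invoking Theorem 4.1 then gives the strong convergence $x_n\to P_{F(T)}x_0$.

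I do not anticipate a genuine obstacle: this theorem is a corollary of Theorem 4.1, and its entire content is the one-line, triangle-inequality inclusion $C_n\subset {^*C_n}$ together with the routine check that $\mu_n=2/\alpha_n$ stays in the admissible range $[L+1,\mu]$. The only point requiring a little care is the bookkeeping of which auxiliary set ($C_n$, ${^*C_n}$, $C_n^*$, or $C_n^{**}$) plays which role, and confirming that one and the same $C_n^*$ can satisfy $F(T)\subset C_n^*\subset C_n$ and simultaneously serve as the set $C_n^{**}$ demanded by Theorem 4.1.
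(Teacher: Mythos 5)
Your proposal is correct and follows essentially the same route as the paper: the paper also obtains this theorem as a corollary of Theorem 4.1 by observing that $F(T)\subset C_n^*\subset C_n\subset {^*C_n}$, asserting the inclusion $C_n\subset {^*C_n}$ without detail, whereas you supply the (correct) one-line triangle-inequality verification $\|x_n-Tx_n\|=\frac{1}{\alpha_n}\|x_n-y_n\|\leq\frac{1}{\alpha_n}(\|x_n-z\|+\|z-y_n\|)\leq\frac{2}{\alpha_n}\|x_n-z\|$. Your additional check that $\mu_n=2/\alpha_n$ lies in $[L+1,\mu]$ is likewise consistent with how the paper grounds Theorem 4.1 in Theorem 3.1.
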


Theorem 4.3 is one of the generalized CQ algorithms in this paper.
Likewise, we can yield many other similar algorithms for Mann,
Ishikawa and Halpern iterations, respectively. They will be proposed
in the following three sections. Some of them are the extensions of
previous results. However, others are obtained directly based on the
framework.

\section{Generalized CQ algorithms for Mann's iteration
process}

In this section, we proposed some algorithms for Mann's iteration
process. To prove the main theorems, we need the following lemmas.

\begin{lemma}
Let $C$ be a nonempty closed convex subset of a real Hilbert space
$H$. Let $T:C\rightarrow C$ be a Lipschitz pseudo-contractive
mapping with Lipschitz constant $L\geq 1$. $\forall x\in C$,
$\alpha\in (0,\frac{1}{L+1})$ and $\tau\in (0,1]$, let
$$
y=(1-\alpha)x+\alpha Tx,
$$
$$
C_x=\{z\in C: \tau\alpha[1-(1+L)\alpha]\|x-T x\|^2\leq\langle
x-z,y-Ty\rangle\}
$$
and
$$
^*C_x=\{z\in C: \|x-T
x\|\leq\frac{(L+1)\alpha+1}{\tau\alpha[1-(L+1)\alpha]}\|x-z\|\}.
$$
Then, there holds $C_x$ is a closed convex set with $F(T)\subset
C_x\subset {^*C_x}$.
\end{lemma}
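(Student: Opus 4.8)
The statement bundles together three things that I would prove separately: (i) $C_x$ is closed and convex; (ii) $F(T)\subset C_x$; and (iii) $C_x\subset {}^*C_x$. The convexity claim (i) is the easiest: the defining inequality is $\tau\alpha[1-(1+L)\alpha]\|x-Tx\|^2 \le \langle x-z, y-Ty\rangle$, which, reading $z$ as the free variable, is of the form $\langle z, -(y-Ty)\rangle \le \text{const}$. This is a (non-strict) linear inequality in $z$ intersected with the closed convex set $C$, hence a closed half-space meeting $C$, so $C_x$ is automatically closed and convex; no serious work is needed here.

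**The inclusion $F(T)\subset C_x$.** This is where the pseudo-contractivity is used and is the first genuine computation. I would take $p\in F(T)$ and verify the defining inequality with $z=p$. The right-hand side becomes $\langle x-p, y-Ty\rangle$. The plan is to expand $y=(1-\alpha)x+\alpha Tx$ inside this inner product and to exploit the equivalent characterization (1.4) of pseudo-contractivity, namely $\langle Tu-Tv, u-v\rangle \le \|u-v\|^2$, applied to suitable pairs (the natural choices being the pair $(x,y)$ and the pair $(y,p)$, using $Tp=p$). The term $[1-(1+L)\alpha]$ in the coefficient strongly suggests that after expanding, one collects a positive multiple of $\|x-Tx\|^2$ and then bounds an error term of the form $\langle Tx-Ty,\ \cdot\rangle$ using the Lipschitz constant $L$ together with $\|x-y\|=\alpha\|x-Tx\|$. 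The factor $\tau\in(0,1]$ should only make the required inequality \emph{easier} (it shrinks the left-hand side), so I expect $\tau$ to play no essential role in this inclusion beyond being bounded by $1$.

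**The inclusion $C_x\subset{}^*C_x$.** Here I assume $z\in C_x$, i.e. $\tau\alpha[1-(1+L)\alpha]\|x-Tx\|^2 \le \langle x-z, y-Ty\rangle$, and must deduce the norm inequality defining ${}^*C_x$. The strategy is to bound the right-hand inner product from above by Cauchy–Schwarz: $\langle x-z, y-Ty\rangle \le \|x-z\|\cdot\|y-Ty\|$. The remaining task is to control $\|y-Ty\|$ in terms of $\|x-Tx\|$. Writing $y-Ty = (y-x)+(x-Tx)+(Tx-Ty)$ and using $\|y-x\|=\alpha\|x-Tx\|$ together with the Lipschitz bound $\|Tx-Ty\|\le L\|x-y\|=L\alpha\|x-Tx\|$ gives $\|y-Ty\|\le \big((L+1)\alpha+1\big)\|x-Tx\|$. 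Substituting this into the Cauchy–Schwarz estimate, cancelling one factor of $\|x-Tx\|$ (the case $\|x-Tx\|=0$ meaning $x\in F(T)$ is handled trivially), and dividing by the positive quantity $\tau\alpha[1-(L+1)\alpha]$ reproduces exactly the coefficient $\frac{(L+1)\alpha+1}{\tau\alpha[1-(L+1)\alpha]}$ appearing in the definition of ${}^*C_x$.

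**Main obstacle.** The delicate step is the $F(T)\subset C_x$ inclusion, because it is the only place where the algebraic structure must cooperate with a sign condition: one needs the coefficient $[1-(1+L)\alpha]$ to emerge with the correct (positive) sign, which is exactly why the hypothesis $\alpha\in(0,\frac{1}{L+1})$ is imposed. I would be careful to apply the pseudo-contractive inequality (1.4) in the right direction and to track which cross terms are controlled by Lipschitz continuity versus by pseudo-contractivity; a wrong pairing will produce a term with the wrong sign. The other two parts are essentially bookkeeping (a half-space argument and a Cauchy–Schwarz-plus-triangle-inequality estimate) and should present no difficulty.
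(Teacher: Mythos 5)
Your proposal is correct and follows essentially the same route as the paper: the convexity of $C_x$ is the same half-space observation, and the inclusion $C_x\subset{}^*C_x$ is exactly the paper's chain of Cauchy--Schwarz followed by the triangle/Lipschitz bound $\|y-Ty\|\le[(L+1)\alpha+1]\|x-Tx\|$ and division by $\tau\alpha[1-(L+1)\alpha]>0$. The only difference is in $F(T)\subset C_x$: the paper simply quotes the inequality $\alpha[1-(L+1)\alpha]\|x-Tx\|^2\le\langle x-p,y-Ty\rangle$ from the reference [8] and multiplies by $\tau\le 1$, whereas you sketch a direct derivation (split $\langle x-p,y-Ty\rangle=\langle x-y,y-Ty\rangle+\langle y-p,y-Ty\rangle$, kill the second term with (1.4) applied to the pair $(y,p)$, and bound the cross term in the first via $L\|x-y\|=L\alpha\|x-Tx\|$), which is sound and merely fills in the detail the paper outsources.
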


\begin{proof}
Obviously, $C_x$ is closed and convex. From \cite{8}, we have
\begin{equation}
\alpha[1-(L+1)\alpha]\|x-Tx\|^2\leq\langle x-p,y-Ty\rangle, \
\forall p\in F(T).
\end{equation}
Since $\tau\in (0,1]$, we obtain
\begin{equation}
\tau\alpha[1-(L+1)\alpha]\|x-Tx\|^2\leq\langle x-p,y-Ty\rangle.
\end{equation}
From (5.2), we can conclude that $F(T)\subset C_x$. Let $u\in C_x$,
we have $\forall x\in C$
\begin{equation}
\aligned \tau\alpha[1-(L+1)\alpha]\|x-Tx\|^2&\leq\langle
x-u,y-Ty\rangle\\
&\leq\|x-u\|\|y-Ty\|\\
&\leq\|x-u\|[\|y-x\|+\|x-Tx\|+\|Tx-Ty\|]\\
&\leq\|x-u\|[(L+1)\|x-y\|+\|x-Tx\|]\\
&=[(L+1)\alpha+1]\|x-u\|\|x-Tx\|.
\endaligned
\end{equation}
From the assumption of the coefficients we have
\begin{equation}
\|x-Tx\|\leq\frac{(L+1)\alpha+1}{\tau\alpha[1-(L+1)\alpha]}\|x-u\|
\end{equation}
which implies $u\in {^*C_x}$. So, $F(T)\subset C_x\subset {^*C_x}$.
\end{proof}

\begin{lemma}
Let $C$ be a nonempty closed convex subset of a real Hilbert space
$H$. Let $T:C\rightarrow C$ be a Lipschitz pseudo-contractive
mapping with Lipschitz constant $L\geq 1$ and $F(T)\neq\O$. $\forall
x\in C$, $\alpha\in (0,\frac{1}{L+1})$ and $\tau\in (0,1]$, let
\begin{equation}
y=(1-\alpha)x+\alpha Tx,
\end{equation}
$$
C_x=\{z\in C: \tau\|\alpha (I-T)y\|^2\leq 2\alpha\langle
x-z,(I-T)y\rangle\}.
$$
and
$$
^*C_x=\{z\in C:
\|x-Tx\|\leq\frac{2}{\tau\alpha[1-(L+1)\alpha]}\|x-z\|\}.
$$
Then, there holds $C_x$ is a closed convex set with $F(T)\subset
C_x\subset {^*C_x}$.
\end{lemma}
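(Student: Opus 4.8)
The plan is to mirror the structure of the proof of Lemma 5.1, since the two statements differ only in how the defining inequality for $C_x$ is written. First I would verify that $C_x$ is closed and convex. The set is
$$
C_x=\{z\in C:\tau\|\alpha(I-T)y\|^2\leq 2\alpha\langle x-z,(I-T)y\rangle\},
$$
and since $x,y,\alpha,\tau$ are fixed once $x$ is given, the right-hand side $2\alpha\langle x-z,(I-T)y\rangle$ is an affine function of $z$, while the left-hand side is a constant. Thus the condition is a single (non-strict) linear inequality in $z$ intersected with the closed convex set $C$, so $C_x$ is closed and convex. This is immediate and costs nothing.

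Next I would establish $F(T)\subset C_x$. The key is again the inequality from reference \cite{8}, namely $\alpha[1-(L+1)\alpha]\|x-Tx\|^2\leq\langle x-p,y-Ty\rangle$ for all $p\in F(T)$; however, here the defining inequality is phrased directly in terms of $(I-T)y=y-Ty$ rather than $x-Tx$, so I expect the cleaner route is to reprove the fixed-point estimate in the form $\tau\|\alpha(I-T)y\|^2\leq 2\alpha\langle x-p,(I-T)y\rangle$ using $\tau\in(0,1]$ together with the pseudo-contractivity of $T$ and the identity $y-x=-\alpha(I-T)x$. Substituting $p$ for $z$, the defining inequality should reduce to exactly this estimate, giving $p\in C_x$.

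The main obstacle — and the step I expect to require the most care — is the inclusion $C_x\subset{}^*C_x$, where
$$
{}^*C_x=\Big\{z\in C:\|x-Tx\|\leq\tfrac{2}{\tau\alpha[1-(L+1)\alpha]}\|x-z\|\Big\}.
$$
For $u\in C_x$ I would start from $\tau\|\alpha(I-T)y\|^2\leq 2\alpha\langle x-u,(I-T)y\rangle$ and apply Cauchy--Schwarz to the right-hand side to get $\tau\alpha^2\|(I-T)y\|^2\leq 2\alpha\|x-u\|\,\|(I-T)y\|$, hence $\tau\alpha\|(I-T)y\|\leq 2\|x-u\|$. The delicate part is then bounding $\|x-Tx\|$ below by a multiple of $\|(I-T)y\|$: writing $y=(1-\alpha)x+\alpha Tx$ and using the $L$-Lipschitz and pseudo-contractive properties of $T$, I would show $\|(I-T)y\|\geq[1-(L+1)\alpha]\|x-Tx\|$, so that the factor $1-(L+1)\alpha>0$ (guaranteed by $\alpha\in(0,\frac{1}{L+1})$) appears in the denominator. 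Combining this lower bound with $\tau\alpha\|(I-T)y\|\leq 2\|x-u\|$ yields $\tau\alpha[1-(L+1)\alpha]\|x-Tx\|\leq 2\|x-u\|$, which rearranges to exactly the inequality defining ${}^*C_x$, completing the proof that $F(T)\subset C_x\subset{}^*C_x$.
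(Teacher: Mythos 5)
Your proposal is correct and follows essentially the same route as the paper: the paper also notes $C_x$ is a half-space intersected with $C$, obtains $F(T)\subset C_x$ by multiplying the known estimate $\|\alpha(I-T)y\|^2\leq 2\alpha\langle x-p,(I-T)y\rangle$ (cited from the literature) by $\tau\leq 1$, and then combines Cauchy--Schwarz ($\|y-Ty\|\leq\frac{2}{\tau\alpha}\|x-u\|$) with the triangle/Lipschitz bound $\|x-Tx\|\leq(L+1)\alpha\|x-Tx\|+\|y-Ty\|$, which is exactly your lower bound $\|(I-T)y\|\geq[1-(L+1)\alpha]\|x-Tx\|$ rearranged.
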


\begin{proof}
Obviously, $C_x$ is closed and convex. From \cite{10}, we have
$$
\|\alpha(I-T)y\|^2\leq 2\alpha\langle x-p,(I-T)y\rangle, \ \forall
p\in F(T).
$$
since $\tau\in (0,1]$, we obtain
$$
\tau\|\alpha(I-T)y\|^2\leq 2\alpha\langle x-p,(I-T)y\rangle.
$$
which implies that $p\in C_x$, i.e., $F(T)\subset C_x$. Let $u\in
C_x$, then $\forall x\in C$
\begin{equation}
\aligned \tau\|\alpha(I-T)y\|^2&\leq 2\alpha\langle x-u,(I-T)y\rangle\\
&\leq 2\alpha\|x-u\|\|(I-T)y\|.
\endaligned
\end{equation}
It follows that
\begin{equation}
\|y-Ty\|\leq \frac{2}{\tau\alpha}\|x-u\|.
\end{equation}
On the other hand, we have
\begin{equation}
\aligned \|x-Tx\|&\leq\|x-y\|+\|y-Ty\|+\|Ty-Tx\|\\
&\leq(L+1)\alpha\|x-Tx\|+\|y-Ty\|.
\endaligned
\end{equation}
Substitute (5.7) into (5.8), together with the assumption of
coefficients, we get
\begin{equation}
\|x-Tx\|\leq\frac{2}{\tau\alpha[1-(L+1)\alpha]}\|x-u\|
\end{equation}
which implies $u\in {^*C_x}$. So, $F(T)\subset C_x\subset {^*C_x}$.
\end{proof}

\begin{lemma}
Let $C$ be a nonempty closed convex subset of a real Hilbert space
$H$. Let $T$ be a $\kappa-$strict pseudo-contraction of $C$ into
itself for some $0\leq\kappa<1$ with $F(T)\neq\O$. $\forall x\in C$
and $\alpha\in (0,1]$, let
$$
y=(1-\alpha)x+\alpha Tx,
$$
$$
C_x=\{z\in
C:\|y-z\|^2\leq\|x-z\|^2+\alpha(\kappa-(1-\alpha))\|x-Tx\|^2\}
$$
and
$$
^*C_x=\{z\in C:\|x-Tx\|\leq\frac{2}{1-\kappa}\|x-z\|\}.
$$
Then, $C_x$ is a closed convex subset of $C$ with $F(T)\subset
C_x\subset {^*C_x}$.
\end{lemma}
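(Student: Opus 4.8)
The plan is to establish the three assertions of the lemma in turn: that $C_x$ is closed and convex, that $F(T)\subset C_x$, and that $C_x\subset {^*C_x}$, mirroring the structure of Lemmas 5.1 and 5.2. For closedness and convexity I would simply recast the defining inequality of $C_x$ into the form covered by Lemma 2.3. Expanding $\|y-z\|^2$ and $\|x-z\|^2$, the quadratic term $\|z\|^2$ cancels, so the constraint is affine in $z$; invoking Lemma 2.3 with the inner-product vector taken to be $0$ and the constant $a=\alpha(\kappa-(1-\alpha))\|x-Tx\|^2$ then gives closedness and convexity at once.

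For the inclusion $F(T)\subset C_x$, fix $p\in F(T)$, so $Tp=p$. Writing $y-p=(1-\alpha)(x-p)+\alpha(Tx-p)$ and applying Lemma 2.1(ii) to the vectors $Tx-p$ (coefficient $\alpha$) and $x-p$ (coefficient $1-\alpha$), whose difference has norm $\|x-Tx\|$, I obtain
$$\|y-p\|^2=(1-\alpha)\|x-p\|^2+\alpha\|Tx-p\|^2-\alpha(1-\alpha)\|x-Tx\|^2.$$
The defining inequality (1.2) of a $\kappa$-strict pseudo-contraction, applied to the pair $x,p$ and using $(I-T)p=0$, gives $\|Tx-p\|^2\leq\|x-p\|^2+\kappa\|x-Tx\|^2$. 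Substituting this and collecting the coefficients of $\|x-Tx\|^2$ yields precisely $\|y-p\|^2\leq\|x-p\|^2+\alpha(\kappa-(1-\alpha))\|x-Tx\|^2$, so $p\in C_x$.

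For $C_x\subset {^*C_x}$, take $u\in C_x$ and use $y-u=(x-u)-\alpha(x-Tx)$ to expand $\|y-u\|^2=\|x-u\|^2-2\alpha\langle x-u,x-Tx\rangle+\alpha^2\|x-Tx\|^2$. Inserting this into the inequality defining $C_x$, cancelling $\|x-u\|^2$, and dividing by $\alpha>0$, the term $\alpha\|x-Tx\|^2$ combines with $(\kappa-(1-\alpha))\|x-Tx\|^2$ so that the $\alpha$-dependence cancels, leaving
$$2\langle x-u,x-Tx\rangle\geq(1-\kappa)\|x-Tx\|^2.$$
Bounding the left side by Cauchy–Schwarz and dividing by $\|x-Tx\|$ (the case $\|x-Tx\|=0$ being immediate) produces $\|x-Tx\|\leq\frac{2}{1-\kappa}\|x-u\|$, hence $u\in {^*C_x}$.

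The step requiring the most care is this last inclusion: one must track the algebra so that, after clearing $\alpha$, the coefficient of $\|x-Tx\|^2$ collapses exactly to $1-\kappa$ rather than to some merely positive quantity, since it is this exact cancellation that legitimizes the division by $\|x-Tx\|$ and reproduces the stated constant $\frac{2}{1-\kappa}$. By contrast, the first two parts are routine applications of Lemma 2.3 and of the strict pseudo-contraction inequality together with Lemma 2.1(ii).
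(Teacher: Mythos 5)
Your proposal is correct and follows essentially the same route as the paper's proof: Lemma 2.3 for closedness and convexity, Lemma 2.1(ii) plus the strict pseudo-contraction inequality for $F(T)\subset C_x$, and algebraic manipulation of the defining inequality of $C_x$ down to $(1-\kappa)\|x-Tx\|^2\leq 2\langle x-Tx,x-u\rangle$ followed by Cauchy--Schwarz. The only cosmetic difference is that you expand $\|y-u\|^2$ directly from $y-u=(x-u)-\alpha(x-Tx)$, whereas the paper reaches the same key inequality by combining the Lemma 2.1(ii) identity with the expansion of $\|Tx-u\|^2$; your explicit handling of the case $\|x-Tx\|=0$ is a small point the paper leaves implicit.
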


\begin{proof}
By Lemma 2.3, $C_x$ is closed and convex. Let $p\in F(T)$, for any
$x\in C$ we have
\begin{equation}
\aligned \|y-p\|^2&=\|(1-\alpha)(x-p)+\alpha(Tx-p)\|^2\\
&=(1-\alpha)\|x-p\|^2+\alpha\|Tx-p\|^2-\alpha(1-\alpha)\|x-Tx\|^2\\
&\leq(1-\alpha)\|x-p\|^2+\alpha(\|x-p\|^2+\kappa\|x-Tx\|^2)\\
&\quad-\alpha(1-\alpha)\|x-Tx\|^2\\
&=\|x-p\|^2+\alpha(\kappa-(1-\alpha))\|x-Tx\|^2.
\endaligned
\end{equation}
Hence, $F(T)\subset C_x$. Let $u\in C_x$, then $\forall x\in C$, we
obtain
\begin{equation}
\|y-u\|^2\leq\|x-u\|^2+\alpha(\kappa-(1-\alpha))\|x-Tx\|^2
\end{equation}
Besides, we have
\begin{equation}
\|y-u\|^2=(1-\alpha)\|x-u\|^2+\alpha\|Tx-u\|^2-\alpha(1-\alpha)\|x-Tx\|^2.
\end{equation}
Substitute (5.11) into (5.12) to get
\begin{equation}
\alpha\|Tx-u\|^2\leq\alpha\|x-u\|^2+\alpha\kappa\|x-Tx\|^2.
\end{equation}
Since $\alpha>0$, we have
\begin{equation}
\|Tx-u\|^2\leq\|x-u\|^2+\kappa\|x-Tx\|^2.
\end{equation}
On the other hand, we compute
\begin{equation}
\|Tx-u\|^2=\|Tx-x\|^2+2\langle Tx-x,x-u\rangle+\|x-u\|^2.
\end{equation}
Combining (5.14) and (5.15) yields
\begin{equation}
(1-\kappa)\|x-Tx\|^2\leq 2\langle x-Tx,x-u\rangle\leq
2\|x-Tx\|\|x-u\|.
\end{equation}
Since $\kappa<1$, then
\begin{equation}
\|x-Tx\|\leq\frac{2}{1-\kappa}\|x-u\|
\end{equation}
which implies $u\in {^*C_x}$. So, $F(T)\subset C_x\subset {^*C_x}$.
\end{proof}

Using Lemma 5.1, we obtain the following theorem.

\begin{theorem}
Let $C$ be a nonempty closed convex subset of a real Hilbert space
$H$ and $T$ a Lipschitz pseudo-contraction from $C$ into itself with
the Lipschitz constant $L\geq 1$ and $F(T)\neq\O$. Assume sequence
$\{\tau_n\}\subset [\tau,1]$ with $\tau\in (0,1]$ and sequence
$\{\alpha_n\}\subset [a,b]$ with $a,b\in (0,\frac{1}{L+1})$. Let
$\{x_n\}$ be a sequence generated by the following manner:
$$
\left\{\begin{array}{l}x_0\in C \ chosen \ arbitrarily\\
y_n=(1-\alpha_n) x_n+\alpha_n Tx_n\\
C_n=\{z\in C: \tau_n\alpha_n[1-(1+L)\alpha_n]\|x_n-T x_n\|^2\leq\langle x_n-z,y_n-Ty_n\rangle\}\\
Q_n=\{z\in C: \langle z-x_n,x_n-x_0\rangle\geq 0\}\\
x_{n+1}=P_{C_n^*\cap Q_n}x_0
\end{array}\right.
$$
where $C_n^*$ is a closed convex set with $F(T)\subset C_n^*\subset
C_n$. Then $\{x_n\}$ converges strongly to $P_{F(T)}x_0$.
\end{theorem}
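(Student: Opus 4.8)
The plan is to reduce Theorem 5.1 to the master Theorem 3.1, exactly as the paper has done for the nonexpansive case in Section 4. The whole point of the framework is that once we have built, for each $x\in C$, a set $C_x$ of the form used in Lemma 5.1 together with its companion $^*C_x$, the convergence is automatic. So the real work is verifying the hypotheses of Theorem 3.1, not re-running the projection argument.

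First I would invoke Lemma 5.1 directly with $x=x_n$, $\alpha=\alpha_n$, $\tau=\tau_n$, and $y=y_n$. The lemma tells us that the set
$$
C_n=\{z\in C:\tau_n\alpha_n[1-(1+L)\alpha_n]\|x_n-Tx_n\|^2\leq\langle x_n-z,y_n-Ty_n\rangle\}
$$
is closed and convex and satisfies $F(T)\subset C_n\subset {^*C_n}$, where
$$
{^*C_n}=\Bigl\{z\in C:\|x_n-Tx_n\|\leq\frac{(L+1)\alpha_n+1}{\tau_n\alpha_n[1-(L+1)\alpha_n]}\|x_n-z\|\Bigr\}.
$$
This is precisely the $C_n$ form demanded by Theorem 3.1, once I read off the Lipschitz-type coefficient
$$
\mu_n=\frac{(L+1)\alpha_n+1}{\tau_n\alpha_n[1-(L+1)\alpha_n]},\qquad \theta_n\equiv 0.
$$
Because $F(T)\subset C_n^*\subset C_n\subset {^*C_n}$, the set $C_n^*$ used in the iteration is a closed convex set squeezed between $F(T)$ and the ${^*C_n}$ appearing in Theorem 3.1, which is exactly what that theorem allows.

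The steps I would then carry out are: (1) confirm that $T$, being $L$-Lipschitz, is $L$-Lipschitz in the sense of Lemma 2.7 / Theorem 3.1, and that $F(T)\neq\O$ is closed and convex — here I would cite Lemma 2.6 (the pseudo-contractive case, via Lemma 2.6 applied to the strict/pseudo-contractive setting) or note that closedness and convexity of $F(T)$ for a Lipschitz pseudo-contraction follows from Lemma 2.6; (2) check that $\{\mu_n\}$ is uniformly bounded, i.e. $L+1\leq\mu_n\leq\mu<\infty$ for some $\mu$; (3) note $\theta_n\equiv 0$ trivially gives $\lim_n\theta_n(x_{n+1})=0$; (4) verify $I-T$ is demiclosed at zero, citing Lemma 2.5; (5) apply Theorem 3.1 to conclude $x_n\to P_{F(T)}x_0$.

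The main obstacle is step (2), the uniform bounds on $\mu_n$. The lower bound $\mu_n\geq L+1$ needs the factor $\frac{(L+1)\alpha_n+1}{\tau_n\alpha_n[1-(L+1)\alpha_n]}\geq L+1$, and the upper bound $\mu_n\leq\mu$ uses the uniform constraints $\tau_n\in[\tau,1]$ and $\alpha_n\in[a,b]$ with $b<\frac{1}{L+1}$, so that the denominator $\tau_n\alpha_n[1-(L+1)\alpha_n]$ stays bounded away from zero. I would set
$$
\mu=\frac{(L+1)b+1}{\tau a[1-(L+1)b]}
$$
(or the appropriate extremal value over the compact ranges) and argue monotonicity/boundedness of the function $\alpha\mapsto\frac{(L+1)\alpha+1}{\alpha[1-(L+1)\alpha]}$ on $[a,b]\subset(0,\frac{1}{L+1})$; the key inequalities $0<a\leq\alpha_n\leq b<\frac1{L+1}$ and $0<\tau\leq\tau_n\leq 1$ guarantee $1-(L+1)\alpha_n\geq 1-(L+1)b>0$, so $\mu_n$ is finite and uniformly bounded. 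Verifying the lower bound $\mu_n\geq L+1$ is a short elementary estimate. Once these bounds are in hand, every hypothesis of Theorem 3.1 is met and the conclusion follows immediately.
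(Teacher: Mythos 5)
Your proposal is correct and follows essentially the same route as the paper: invoke Lemma 5.1 to get $F(T)\subset C_n^*\subset C_n\subset{^*C_n}$, identify $\mu_n=\frac{(L+1)\alpha_n+1}{\tau_n\alpha_n[1-(L+1)\alpha_n]}$ with $\theta_n\equiv 0$, bound $\mu_n\leq\frac{(L+1)b+1}{\tau a[1-(L+1)b]}$, and conclude by Lemma 2.5 and Theorem 3.1. The only slip is the passing reference to Lemma 2.6 for closedness of $F(T)$ and demiclosedness; for a Lipschitz pseudo-contraction the relevant result is Lemma 2.5, which you do cite correctly later.
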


\begin{proof}
Let $^*C_n=\{z\in C: \|x_n-T
x_n\|\leq\frac{(L+1)\alpha_n+1}{\tau_n\alpha_n[1-(L+1)\alpha_n]}\|x_n-z\|\}$,
then using Lemma 5.1, we obtain $F(T)\subset C_n^*\subset C_n\subset
{^*C_n}$. From the assumption,
$\frac{(L+1)\alpha_n+1}{\tau_n\alpha_n[1-(L+1)\alpha_n]}\leq\frac{(L+1)b+1}{\tau
a[1-(L+1)b]}<\infty$. By Lemma 2.5 and Theorem 3.1, we can prove
$x_n\rightarrow P_{F(T)}x_0$.
\end{proof}

We can prove the following theorem based on Lemma 5.2.

\begin{theorem}
Let $C$ be a nonempty closed convex subset of a real Hilbert space
$H$. Let $T:C\rightarrow C$ be a L-Lipschitz pseudo-contractive
mapping such that $L\geq 1$ and $F(T)\neq\O$. Assume sequence
$\{\tau_n\}\subset [\tau,1]$ with $\tau\in (0,1]$ and sequence
$\{\alpha_n\}\subset [a,b]$ with $a,b\in (0,\frac{1}{L+1})$. Suppose
$x_0\in C$ chosen arbitrarily and $\{x_n\}$ is given by
$$
\left\{\begin{array}{l}y_n=(1-\alpha_n)x_n+\alpha_n Tx_n\\
C_n=\{z\in C: \tau_n\|\alpha_n(I-T)y_n\|^2\leq 2\alpha_n\langle x_n-z,(I-T)y_n\rangle\}\\
Q_n=\{z\in C: \langle z-x_n,x_n-x_0\rangle\geq 0\}\\
x_{n+1}=P_{C_n^*\cap Q_n}x_0
\end{array}\right.
$$
where $C_n^*$ is a closed convex set with $F(T)\subset C_n^*\subset
C_n$. Then, $\{x_n\}$ converges strongly to $P_{F(T)}x_0$.
\end{theorem}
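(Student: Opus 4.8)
The plan is to deduce the theorem from the master Theorem 3.1, in exact parallel with the derivation of Theorem 5.1, but invoking Lemma 5.2 in place of Lemma 5.1. The role of the ``large'' Lipschitz-type set appearing in Theorem 3.1 will be played by the auxiliary set ${}^*C_n$ produced by Lemma 5.2, and the additive function $\theta_n$ will be taken identically zero.

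First I would set, for each $n$,
$$
{}^*C_n=\Big\{z\in C:\|x_n-Tx_n\|\leq\frac{2}{\tau_n\alpha_n[1-(L+1)\alpha_n]}\,\|x_n-z\|\Big\},
$$
and apply Lemma 5.2 with $x=x_n$, $\alpha=\alpha_n$, $\tau=\tau_n$ and $y=y_n$ (legitimate since $\alpha_n\in[a,b]\subset(0,\frac{1}{L+1})$ and $\tau_n\in[\tau,1]\subset(0,1]$). This gives $F(T)\subset C_n\subset{}^*C_n$, where $C_n$ is the set in the present statement. Combined with the hypothesis $F(T)\subset C_n^*\subset C_n$ this yields $F(T)\subset C_n^*\subset{}^*C_n$, which is exactly the sandwiching required by Theorem 3.1 once ${}^*C_n$ is identified with the set there called $C_n$ and one chooses $\theta_n\equiv 0$.

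The only quantitative point to verify is that the coefficients $\mu_n:=2/\big(\tau_n\alpha_n[1-(L+1)\alpha_n]\big)$ obey the two-sided bound $L+1\leq\mu_n\leq\mu<\infty$ demanded by Theorem 3.1. For the upper bound I would use that $g(\alpha):=\alpha[1-(L+1)\alpha]$ is concave and strictly positive on the compact subinterval $[a,b]\subset(0,\frac{1}{L+1})$, hence bounded below there by $m:=\min\{g(a),g(b)\}>0$; together with $\tau_n\geq\tau$ this gives $\mu_n\leq 2/(\tau m)=:\mu<\infty$. For the lower bound I would note that $g$ attains its global maximum $\frac{1}{4(L+1)}$ at $\alpha=\frac{1}{2(L+1)}$ and that $\tau_n\leq 1$, whence $\mu_n\geq 8(L+1)\geq L+1$. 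Since $\theta_n\equiv 0$, the hypothesis $\lim_n\theta_n(x_{n+1})=0$ of Theorem 3.1 holds trivially.

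Finally, because $T$ is $L$-Lipschitz it is in particular demicontinuous and pseudo-contractive, so Lemma 2.5 supplies the remaining structural hypotheses of Theorem 3.1: $F(T)$ is closed and convex, and $I-T$ is demiclosed at zero. An application of Theorem 3.1 then yields $x_n\to P_{F(T)}x_0$. I do not expect a genuine obstacle here, since all the analytic work --- boundedness of $\{x_n\}$, the facts $\|x_{n+1}-x_n\|\to 0$ and $\|x_n-Tx_n\|\to 0$, and the concluding demiclosedness plus Lemma 2.4 argument --- is already encapsulated in Theorem 3.1. The only real labour is the elementary estimate bounding $\mu_n$ away from $0$ and $\infty$, and checking that Lemma 5.2 applies \emph{verbatim}.
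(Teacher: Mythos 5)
Your proposal is correct and follows essentially the same route as the paper: introduce the auxiliary set ${}^*C_n$ from Lemma 5.2, verify $F(T)\subset C_n^*\subset C_n\subset{}^*C_n$ and the bound $\mu_n\leq\frac{2}{\tau a[1-(L+1)b]}<\infty$, then invoke Lemma 2.5 and Theorem 3.1 with $\theta_n\equiv 0$. Your explicit check of the lower bound $\mu_n\geq 8(L+1)\geq L+1$ is a small point the paper leaves implicit, but otherwise the arguments coincide.
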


\begin{proof}
From the assumption, we have
$\frac{2}{\tau_n\alpha_n[1-(L+1)\alpha_n]}\leq\frac{2}{\tau
a[1-(L+1)b]}<\infty$. Let $^*C_n=\{z\in C:
\|x_n-Tx_n\|\leq\frac{2}{\tau_n\alpha_n[1-(L+1)\alpha_n]}\|x_n-z\|\}$,
using Lemma 5.2, we can conclude $F(T)\subset C_n^*\subset
C_n\subset {^*C_n}$. Hence, by Lemma 2.5 and Theorem 3.1,
$x_n\rightarrow P_{F(T)}x_0$.
\end{proof}

\begin{remark}
In fact, it is easily to prove Theorem 5.2 by Theorem 5.1 directly.
\end{remark}

By Lemma 5.3, the following theorem is valid.

\begin{theorem}
Let $C$ be a nonempty closed convex subset of a Hilbert space $H$
and $T$ a $\kappa-$strict pseudo-contraction of $C$ into itself for
some $0\leq\kappa<1$ with $F(T)\neq\O$. Suppose $x_0\in C$ chosen
arbitrarily and $\{x_n\}$ is given by
$$
\left\{\begin{array}{l}y_n=(1-\alpha_n) x_n+\alpha_n Tx_n\\
C_n=\{z\in C: \|y_n-z\|^2\leq\|x_n-z\|^2+\alpha_n(\kappa-(1-\alpha_n))\|x_n-Tx_n\|^2\}\\
Q_n=\{z\in C: \langle z-x_n,x_n-x_0\rangle\geq 0\}\\
x_{n+1}=P_{C_n^*\cap Q_n}x_0
\end{array}\right.
$$
where $C_n^*$ is a closed convex set with $F(T)\subset C_n^*\subset
C_n$ and $\{\alpha_n\}$ is chosen such that
$0<\alpha\leq\alpha_n\leq 1$. Then, $\{x_n\}$ converges strongly to
$P_{F(T)}x_0$.
\end{theorem}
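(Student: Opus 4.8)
The plan is to recognize Theorem 5.3 as a direct specialization of Theorem 3.1, with the admissible set $C_n$ furnished by Lemma 5.3 and with an \emph{explicit constant} majorant $\mu_n$, after which everything is a matter of checking that the hypotheses of Theorem 3.1 are met. First I would record that a $\kappa$-strict pseudo-contraction is automatically Lipschitz: writing $a=x-y$ and $b=Tx-Ty$ in the defining inequality (1.2) gives $(1-\kappa)\|b\|^2\leq (1+\kappa)\|a\|^2+2\kappa\|a\|\|b\|$, whence $\|Tx-Ty\|\leq \frac{1+\kappa}{1-\kappa}\|x-y\|$, so $T$ is $L$-Lipschitz with $L=\frac{1+\kappa}{1-\kappa}$. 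The arithmetic fact I want to isolate here is that $L+1=\frac{2}{1-\kappa}$, which is precisely the constant appearing in the set ${}^*C_x$ of Lemma 5.3; this exact matching is what makes the argument close without any slack. Moreover, since $T$ is a $\kappa$-strict pseudo-contraction, Lemma 2.6 guarantees that $F(T)$ is closed and convex and that $I-T$ is demiclosed at zero, so all the structural hypotheses of Theorem 3.1 on the mapping are in place.

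The core step is to cast the algorithm of Theorem 5.3 into the template of Theorem 3.1. Applying Lemma 5.3 with $\alpha=\alpha_n$ shows that each $C_n$ is closed and convex and satisfies $F(T)\subset C_n\subset {}^*C_n$, where ${}^*C_n=\{z\in C:\|x_n-Tx_n\|\leq \frac{2}{1-\kappa}\|x_n-z\|\}$. I would then take the constant sequence $\mu_n\equiv \frac{2}{1-\kappa}$ and the function $\theta_n\equiv 0$. With these choices ${}^*C_n$ coincides verbatim with the set denoted $C_n$ in (3.1), the function $\theta_n$ is nonnegative, and $\lim_{n\to\infty}\theta_n(x_{n+1})=0$ trivially. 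Because $L+1=\frac{2}{1-\kappa}=\mu_n\leq \mu:=\frac{2}{1-\kappa}<\infty$, the admissibility condition $L+1\leq \mu_n\leq \mu$ required by Theorem 3.1 holds.

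Finally, the inclusion chain $F(T)\subset C_n^*\subset C_n\subset {}^*C_n$ shows that the given intermediate set $C_n^*$ is a closed convex set wedged between $F(T)$ and the set ${}^*C_n$ of Theorem 3.1, which is exactly the hypothesis $F(T)\subset C_n^*\subset C_n$ there (with ${}^*C_n$ playing the role of the Theorem 3.1 set $C_n$). Thus every assumption of Theorem 3.1 is verified, and invoking it directly yields $x_n\to P_{F(T)}x_0$. I do not anticipate a genuine obstacle, since the proof mirrors those of Theorems 5.1 and 5.2; the only point demanding real care is the verification that the Lipschitz constant of a $\kappa$-strict pseudo-contraction is exactly $\frac{1+\kappa}{1-\kappa}$, so that $L+1$ equals the constant $\frac{2}{1-\kappa}$ produced by Lemma 5.3 and the constant sequence $\mu_n$ sits on the boundary of the admissible range rather than outside it.
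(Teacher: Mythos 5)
Your proposal is correct and follows essentially the same route as the paper: invoke Lemma 5.3 to get the chain $F(T)\subset C_n^*\subset C_n\subset {}^*C_n$, feed ${}^*C_n$ into Theorem 3.1 with $\mu_n\equiv\frac{2}{1-\kappa}$ and $\theta_n\equiv 0$, and use Lemma 2.6 for demiclosedness. Your explicit verification that a $\kappa$-strict pseudo-contraction is $\frac{1+\kappa}{1-\kappa}$-Lipschitz, so that $L+1=\frac{2}{1-\kappa}=\mu_n$, is a detail the paper's proof leaves implicit, and it is carried out correctly.
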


\begin{proof}
Clearly, $\frac{2}{1-\kappa}<\infty$. Let ${^*C_n}=\{z\in
C:\|x_n-Tx_n\|\leq\frac{2}{1-\kappa}\|x-z\|\}$, then using Lemma
5.3, $F(T)\subset C_n^*\subset C_n\subset {^*C_n}$. Hence, using
Lemma 2.6 and Theorem 3.1, $x_n\rightarrow P_{F(T)}x_0$.
\end{proof}

\begin{corollary}
Let $C$ be a nonempty closed convex subset of a Hilbert space $H$
and $T$ a nonexpansive mapping of $C$ into itself with $F(T)\neq\O$.
Suppose $x_0\in C$ chosen arbitrarily and $\{x_n\}$ is given by
$$
\left\{\begin{array}{l}y_n=(1-\alpha_n) x_n+\alpha_n Tx_n\\
C_n=\{z\in C: \|y_n-z\|^2\leq\|x_n-z\|^2-\alpha_n(1-\alpha_n)\|x_n-Tx_n\|^2\}\\
Q_n=\{z\in C: \langle z-x_n,x_n-x_0\rangle\geq 0\}\\
x_{n+1}=P_{C_n^*\cap Q_n}x_0
\end{array}\right.
$$
where $C_n^*$ is a closed convex set with $F(T)\subset C_n^*\subset
C_n$ and $\{\alpha_n\}$ is chosen such that
$0<\alpha\leq\alpha_n\leq 1$. Then, $\{x_n\}$ converges strongly to
$P_{F(T)}x_0$.
\end{corollary}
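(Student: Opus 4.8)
The plan is to deduce this corollary directly from Theorem 5.3 by recognizing that a nonexpansive mapping is precisely a $\kappa$-strict pseudo-contraction with $\kappa=0$. Comparing the defining inequality (1.1) for nonexpansiveness, $\|Tx-Ty\|^2\leq\|x-y\|^2$, with the strict pseudo-contraction inequality (1.2), $\|Tx-Ty\|^2\leq\|x-y\|^2+\kappa\|(I-T)x-(I-T)y\|^2$, one sees that setting $\kappa=0$ recovers exactly the nonexpansive condition. Since $0\in[0,1)$, the value $\kappa=0$ is admissible in the hypotheses of Theorem 5.3.

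The key verification is that, under the substitution $\kappa=0$, the set $C_n$ appearing in Theorem 5.3 collapses onto the set $C_n$ of the present corollary. First I would write down the $C_n$ from Theorem 5.3,
$$
C_n=\{z\in C: \|y_n-z\|^2\leq\|x_n-z\|^2+\alpha_n(\kappa-(1-\alpha_n))\|x_n-Tx_n\|^2\},
$$
and then put $\kappa=0$, so that $\alpha_n(\kappa-(1-\alpha_n))=-\alpha_n(1-\alpha_n)$. The resulting set
$$
C_n=\{z\in C: \|y_n-z\|^2\leq\|x_n-z\|^2-\alpha_n(1-\alpha_n)\|x_n-Tx_n\|^2\}
$$
is term-for-term identical with the set stated in the corollary. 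The auxiliary sequence $y_n=(1-\alpha_n)x_n+\alpha_n Tx_n$, the half-space $Q_n$, the projection step $x_{n+1}=P_{C_n^*\cap Q_n}x_0$, and the step-size constraint $0<\alpha\leq\alpha_n\leq 1$ all coincide between the two statements.

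It then remains only to confirm that the remaining hypotheses of Theorem 5.3 hold for the nonexpansive $T$ with $\kappa=0$. The assumption $F(T)\neq\O$ is carried over directly, and the closedness and convexity of $F(T)$ together with the demiclosedness of $I-T$ at zero are supplied by Lemma 2.6 (applied with $\kappa=0$). Having matched every ingredient, I would simply invoke Theorem 5.3 to conclude that $x_n\rightarrow P_{F(T)}x_0$. There is no genuine obstacle here; the entire content is the elementary observation that the nonexpansive case is the $\kappa=0$ instance of the strict pseudo-contraction theorem, and the only point requiring care is the sign bookkeeping when substituting $\kappa=0$ into the coefficient $\alpha_n(\kappa-(1-\alpha_n))$.
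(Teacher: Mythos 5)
Your proposal is correct and follows exactly the route the paper intends: Remark 5.2 states that Corollary 5.4 is deduced from Theorem 5.3, and your argument simply makes explicit the substitution $\kappa=0$ (nonexpansive mappings being $0$-strict pseudo-contractions), under which the coefficient $\alpha_n(\kappa-(1-\alpha_n))$ becomes $-\alpha_n(1-\alpha_n)$ and all other ingredients coincide. No issues.
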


\begin{remark}
Corollary 5.4 is a deduced result of Theorem 5.3. In these two
theorems, set $C_n^*=C_n$, then we obtain two algorithms which were
also proposed in \cite{5}. Corollary 5.4 is also the deduced result
of Theorem 4.3.
\end{remark}

\section{Generalized CQ algorithms for Ishikawa's iteration
process}

In this section, we introduce some algorithms for Ishikawa's
iteration process. To prove the main theorems, we need the following
lemmas.

\begin{lemma}
Let $C$ be a nonempty closed convex subset of a real Hilbert space
$H$. Let $T:C\rightarrow C$ be a Lipschitz pseudo-contractive
mapping with Lipschitz constant $L\geq 1$ and $F(T)\neq\O$. $\forall
x\in C$ and $\alpha,\beta\in (0,1)$ such that
$0<\beta\leq\alpha<\frac{1}{\sqrt{1+L^2}+1}$, let
$$
\aligned
&v=(1-\alpha)x+\alpha Tx\\
&y=(1-\beta)x+\beta Tv,
\endaligned
$$
$$
C_x=\{z\in
C:\|y-z\|^2\leq\|x-z\|^2-\alpha\beta(1-2\alpha-L^2\alpha^2)\|x-Tx\|^2\}
$$
and
$$
^*C_x=\{z\in
C:\|x-Tx\|\leq\frac{2(1+L\alpha)}{\alpha(1-2\alpha-L^2\alpha^2)}\|x-z\|\}.
$$
Then, there holds $C_x$ is a closed convex set with $F(T)\subset
C_x\subset {^*C_x}$.
\end{lemma}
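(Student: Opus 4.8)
The plan is to establish the three assertions in the order stated, following the template of Lemmas 5.1--5.3. The closedness and convexity of $C_x$ are immediate: its defining inequality has the form $\|y-z\|^2\le\|x-z\|^2+\langle 0,z\rangle+a$ with the constant $a=-\alpha\beta(1-2\alpha-L^2\alpha^2)\|x-Tx\|^2$, so Lemma 2.3 applies directly. It is worth noting at the outset that the coefficient $1-2\alpha-L^2\alpha^2$ is strictly positive precisely because $\alpha<\frac{1}{\sqrt{1+L^2}+1}$, the smaller root of $L^2\alpha^2+2\alpha-1=0$; this positivity is what makes the inclusion $F(T)\subset C_x$ nontrivial and the constant defining $^*C_x$ finite. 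Moreover $\frac{1}{\sqrt{1+L^2}+1}<\frac1L$, so $1-L\alpha>0$ throughout.

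The heart of the lemma is the inclusion $F(T)\subset C_x$, i.e.\ the Ishikawa estimate
\[
\|y-p\|^2\le\|x-p\|^2-\alpha\beta(1-2\alpha-L^2\alpha^2)\|x-Tx\|^2,\qquad p\in F(T).
\]
I would start from $y-p=(x-p)-\beta(x-Tv)$, so that $\|y-p\|^2=\|x-p\|^2-2\beta\langle x-p,x-Tv\rangle+\beta^2\|x-Tv\|^2$, and then use that (1.4) is equivalent to $\langle w-Tw,w-p\rangle\ge0$ for every $w\in C$ whenever $p\in F(T)$. Decomposing $x-Tv=\alpha(x-Tx)+(v-Tv)$, and then $x-p=\alpha(x-Tx)+(v-p)$ inside the resulting inner product, splits $\langle x-p,x-Tv\rangle$ into two nonnegative pieces $\alpha\langle x-p,x-Tx\rangle$ and $\langle v-p,v-Tv\rangle$, which I discard, plus the single cross term $\alpha\langle x-Tx,v-Tv\rangle$. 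Setting $d=x-Tx$ and $w=Tx-Tv$ (so $v-Tv=(1-\alpha)d+w$ and $x-Tv=d+w$) turns the surviving bound into a quadratic form in $d$ and $w$, in which I use only the Lipschitz estimate $\|w\|\le L\alpha\|d\|$ (since $\|x-v\|=\alpha\|d\|$) and the Cauchy--Schwarz bound $\langle d,w\rangle\ge-L\alpha\|d\|^2$.

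The delicate point, and what I expect to be the main obstacle, is that these estimates must be applied so as to preserve the negative contribution: a crude triangle-inequality bound on $\|v-Tv\|$ overshoots and leaves a positive coefficient on $\|x-Tx\|^2$. The saving feature is the hypothesis $\beta\le\alpha$. After collecting terms the coefficient of $\|d\|^2$ is $K=-2\alpha\beta(1-\alpha)+\beta^2+2L\alpha\beta(\alpha-\beta)+L^2\alpha^2\beta^2$, and the key algebraic check is that $K+\alpha\beta(1-2\alpha-L^2\alpha^2)=\beta\,g(\beta)$, where $g$ is affine in $\beta$ with $g'(\beta)=(1-L\alpha)^2\ge0$ and $g(\alpha)=0$. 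Since $g$ is nondecreasing and vanishes at $\alpha$, we get $g(\beta)\le0$ for $0<\beta\le\alpha$, whence $K\le-\alpha\beta(1-2\alpha-L^2\alpha^2)$ and the required inequality follows.

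Finally, for $C_x\subset{}^*C_x$ I would argue exactly as in Lemma 5.3. Taking $u\in C_x$ and expanding $\|y-u\|^2=\|x-u\|^2-2\beta\langle x-u,x-Tv\rangle+\beta^2\|x-Tv\|^2$, the defining inequality of $C_x$ gives, after cancelling $\|x-u\|^2$ and dividing by $\beta>0$, the bound $\alpha(1-2\alpha-L^2\alpha^2)\|x-Tx\|^2\le2\langle x-u,x-Tv\rangle\le2\|x-u\|\,\|x-Tv\|$. Combining this with $\|x-Tv\|\le\|x-Tx\|+\|Tx-Tv\|\le(1+L\alpha)\|x-Tx\|$ and dividing by $\|x-Tx\|$ (the case $x=Tx$ being trivial) yields $\|x-Tx\|\le\frac{2(1+L\alpha)}{\alpha(1-2\alpha-L^2\alpha^2)}\|x-u\|$, i.e.\ $u\in{}^*C_x$. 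This last step is routine; essentially all the real work lies in the quadratic-form estimate of the second and third paragraphs.
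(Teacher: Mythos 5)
Your proposal is correct, and its second half (the inclusion $C_x\subset{}^*C_x$) is essentially identical to the paper's: both combine the defining inequality of $C_x$ with the expansion of $\|y-u\|^2$ about $x$, cancel, divide by $\beta$, and finish with Cauchy--Schwarz and $\|x-Tv\|\le(1+L\alpha)\|x-Tx\|$, using the positivity of $1-2\alpha-L^2\alpha^2$ (which you justify more carefully than the paper, whose appeal to monotonicity of $1-2t-L^2t^2$ alone does not give positivity without evaluating at the endpoint $\frac{1}{\sqrt{1+L^2}+1}$). The genuine difference is in the first half: for the key estimate $\|y-p\|^2\le\|x-p\|^2-\alpha\beta(1-2\alpha-L^2\alpha^2)\|x-Tx\|^2$, the paper simply writes ``From \cite{11}, we can easily obtain $F(T)\subset C_x$'' and gives no argument, whereas you supply a complete self-contained derivation. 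I checked your algebra: discarding the two nonnegative terms $\alpha\langle x-p,x-Tx\rangle$ and $\langle v-p,v-Tv\rangle$ (pseudo-contractivity in the form $\langle w-Tw,w-p\rangle\ge0$), writing $v-Tv=(1-\alpha)d+w$ and $x-Tv=d+w$ with $d=x-Tx$, $w=Tx-Tv$, and bounding via $\|w\|\le L\alpha\|d\|$ does yield the coefficient $K=-2\alpha\beta(1-\alpha)+\beta^2+2L\alpha\beta(\alpha-\beta)+L^2\alpha^2\beta^2$, and indeed $K+\alpha\beta(1-2\alpha-L^2\alpha^2)=\beta(\beta-\alpha)(1-L\alpha)^2\le0$ precisely because $\beta\le\alpha$. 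This is where the hypothesis $\beta\le\alpha$ is actually used, a point the paper's citation obscures. Your version buys self-containedness and makes visible where each hypothesis enters; the paper's buys brevity at the cost of deferring the only nontrivial step to an external source.
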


\begin{proof}
Obviously, by Lemma 2.3, we can conclude $C_x$ is closed and convex.
From \cite{11}, we can easily obtain $F(T)\subset C_x$. Taking $u\in
C_x$, $\forall x\in C$, we get
\begin{equation}
\|y-u\|^2\leq\|x-u\|^2-\alpha\beta(1-2\alpha-L^2\alpha^2)\|x-Tx\|^2.
\end{equation}
On the other hand,
\begin{equation}
\|y-u\|^2=\|y-x\|^2+2\langle y-x,x-u\rangle+\|x-u\|^2.
\end{equation}
Combining (6.1) and (6.2), we have
\begin{equation}
\alpha(1-2\alpha-L^2\alpha^2)\|x-Tx\|^2\leq 2\langle x-Tv,x-u\rangle
\end{equation}
It follows that,
\begin{equation}
\aligned \alpha(1-2\alpha-L^2\alpha^2)\|x-Tx\|^2&\leq 2\langle
x-Tv,x-u\rangle\\
&\leq2\|x-u\|\|x-Tv\|\\
&\leq 2\|x-u\|(\|x-Tx\|+\|Tx-Tv\|)\\
&\leq 2(1+L\alpha)\|x-u\|\|x-Tx\|.
\endaligned
\end{equation}
Noting that the function $f(x)=1-2t-L^2 t^2$ is strictly decreasing
in $t\in (0,1)$, we infer that
$$
1-2\alpha-L^2\alpha^2>0.
$$
Then, from (6.4), we have
\begin{equation}
\|x-Tx\|\leq\frac{2(1+L\alpha)}{\alpha(1-2\alpha-L^2\alpha^2)}\|x-u\|
\end{equation}
which implies $u\in {^*C_x}$. So, $F(T)\subset C_x\subset {^*C_x}$.
\end{proof}

\begin{lemma}
Let $C$ be a nonempty closed convex subset of a real Hilbert space
$H$. Let $T:C\rightarrow C$ be a $\kappa-$strict pseudo-contractive
mapping for some $0\leq\kappa<1$ with $F(T)\neq\O$. $\forall x\in C$
and $\alpha,\beta\in [0,1]$ such that
$0<\alpha<\frac{2}{\sqrt{4\kappa L^2+(\kappa+1)^2}+(\kappa+1)}$ and
$0<\beta\leq\kappa\alpha+(1-\kappa)$, let
$$
\aligned
&v=(1-\alpha)x+\alpha Tx\\
&y=(1-\beta)x+\beta Tv,
\endaligned
$$
$$
C_x=\{z\in
C:\|y-z\|^2\leq\|x-z\|^2-\alpha\beta[1-(\kappa+1)\alpha-\kappa
L^2\alpha^2]\|x-Tx\|^2\}
$$
and
$$
^*C_x=\{z\in
C:\|x-Tx\|\leq\frac{2(1+L\alpha)}{\alpha[1-(\kappa+1)\alpha-\kappa
L^2\alpha^2]}\|x-z\|\}.
$$
Then, there holds $C_x$ is a closed convex set with $F(T)\subset
C_x\subset {^*C_x}$.
\end{lemma}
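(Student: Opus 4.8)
The plan is to mirror the structure of the proof of Lemma 6.1, since Lemma 6.2 is its $\kappa$-strict-pseudo-contraction analogue. The set $C_x$ is visibly of the form covered by Lemma 2.3 (take $y$, $x$ as the two base points, $z=0$, and $a=-\alpha\beta[1-(\kappa+1)\alpha-\kappa L^2\alpha^2]\|x-Tx\|^2$), so closedness and convexity follow immediately. The two real tasks are then (i) to establish the inclusion $F(T)\subset C_x$ and (ii) to establish $C_x\subset {}^*C_x$.

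For the inclusion $F(T)\subset C_x$, I would take $p\in F(T)$ and expand $\|y-p\|^2$ using the Ishikawa scheme $y=(1-\beta)x+\beta Tv$ and $v=(1-\alpha)x+\alpha Tx$. Applying Lemma 2.1(ii) to $y-p=(1-\beta)(x-p)+\beta(Tv-p)$ yields
$$
\|y-p\|^2=(1-\beta)\|x-p\|^2+\beta\|Tv-p\|^2-\beta(1-\beta)\|x-Tv\|^2.
$$
The heart of this step is to control $\|Tv-p\|^2$ via the $\kappa$-strict pseudo-contraction inequality (1.2) with $y=p$, giving $\|Tv-p\|^2\le\|v-p\|^2+\kappa\|(I-T)v\|^2$, then to expand $\|v-p\|^2$ again by Lemma 2.1(ii), and finally to bound the resulting $\|(I-T)v\|$ and cross terms using the Lipschitz constant $L$ (note a $\kappa$-strict pseudo-contraction is automatically Lipschitz). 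Carefully collecting the $\|x-Tx\|^2$ coefficients should produce exactly the stated bound $\|y-p\|^2\le\|x-p\|^2-\alpha\beta[1-(\kappa+1)\alpha-\kappa L^2\alpha^2]\|x-Tx\|^2$, at which point $F(T)\subset C_x$ follows. The constraint $0<\beta\le\kappa\alpha+(1-\kappa)$ is presumably precisely what is needed to absorb a leftover term and keep the coefficient in the clean displayed form.

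For the inclusion $C_x\subset {}^*C_x$, I would take $u\in C_x$ and run the same algebraic identity as in Lemma 6.1: write $\|y-u\|^2=\|y-x\|^2+2\langle y-x,x-u\rangle+\|x-u\|^2$ (this is Lemma 2.1(i) suitably arranged), and substitute the defining inequality of $C_x$. Since $y-x=\beta(Tv-x)$, the cross term reduces the problem to bounding $\alpha[1-(\kappa+1)\alpha-\kappa L^2\alpha^2]\|x-Tx\|^2\le 2\langle x-Tv,x-u\rangle$, and then
$$
2\langle x-Tv,x-u\rangle\le 2\|x-u\|\|x-Tv\|\le 2\|x-u\|(\|x-Tx\|+\|Tx-Tv\|)\le 2(1+L\alpha)\|x-u\|\|x-Tx\|,
$$
using $\|x-v\|=\alpha\|x-Tx\|$ and the $L$-Lipschitz bound $\|Tx-Tv\|\le L\|x-v\|$. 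Dividing by $\|x-Tx\|$ then gives $u\in{}^*C_x$.

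The main obstacle I anticipate is the positivity of the coefficient $1-(\kappa+1)\alpha-\kappa L^2\alpha^2$: without it the $C_x\subset{}^*C_x$ step is vacuous and the displayed constant is meaningless. I would verify this by viewing $g(\alpha)=1-(\kappa+1)\alpha-\kappa L^2\alpha^2$ as a downward-opening quadratic in $\alpha$ (strictly decreasing on $(0,1)$) whose positive root is exactly $\tfrac{2}{\sqrt{4\kappa L^2+(\kappa+1)^2}+(\kappa+1)}$, so the hypothesis $0<\alpha<\tfrac{2}{\sqrt{4\kappa L^2+(\kappa+1)^2}+(\kappa+1)}$ guarantees $g(\alpha)>0$. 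This is the analogue of the explicit bound $1-2\alpha-L^2\alpha^2>0$ used in Lemma 6.1, and checking that the stated root formula is indeed the root of $g$ is the one genuinely quantitative point in the argument; everything else is the same bookkeeping as in Lemma 6.1 with $\kappa$ inserted.
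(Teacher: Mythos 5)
Your proposal follows essentially the same route as the paper's proof: Lemma 2.3 for closedness and convexity, the three expansions of $\|v-p\|^2$, $\|v-Tv\|^2$ and $\|y-p\|^2$ via Lemma 2.1(ii) with the condition $\beta\leq\kappa\alpha+(1-\kappa)$ absorbing the leftover $\|x-Tv\|^2$ term, and then the identity $\|y-u\|^2=\|y-x\|^2+2\langle y-x,x-u\rangle+\|x-u\|^2$ followed by Cauchy--Schwarz and the Lipschitz bound for $C_x\subset{}^*C_x$. Your explicit check that the upper bound on $\alpha$ is the positive root of $1-(\kappa+1)t-\kappa L^2t^2$ is in fact more careful than the paper's justification, which only invokes monotonicity.
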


\begin{proof}
Obviously, by Lemma 2.3, we can conclude $C_x$ is closed and convex.
Let $p\in F(T)$. We have,
\begin{equation}
\aligned \|v-p\|^2&=\|(1-\alpha)(x-p)+\alpha(Tx-p)\|^2\\
&=(1-\alpha)\|x-p\|^2+\alpha\|Tx-p\|^2-\alpha(1-\alpha)\|x-Tx\|^2\\
&\leq(1-\alpha)\|x-p\|^2+\alpha(\|x-p\|^2+\kappa\|x-Tx\|^2)\\
&\quad -\alpha(1-\alpha)\|x-Tx\|^2\\
&=\|x-p\|^2+\alpha[\kappa-(1-\alpha)]\|x-Tx\|^2
\endaligned
\end{equation}
and
\begin{equation}
\aligned
\|v-Tv\|^2&=\|(1-\alpha)(x-Tv)+\alpha(Tx-Tv)\|^2\\
&=(1-\alpha)\|x-Tv\|^2+\alpha\|Tx-Tv\|^2-\alpha(1-\alpha)\|x-Tx\|^2\\
&\leq(1-\alpha)\|x-Tv\|^2+L^2\alpha\|x-v\|^2-\alpha(1-\alpha)\|x-Tx\|^2\\
&=(1-\alpha)\|x-Tv\|^2+L^2\alpha^3\|x-Tx\|^2-\alpha(1-\alpha)\|x-Tx\|^2\\
&=(1-\alpha)\|x-Tv\|^2+\alpha(L^2\alpha^2+\alpha-1)\|x-Tx\|^2
\endaligned
\end{equation}
and also,
\begin{equation}
\aligned \|y-p\|^2&=\|(1-\beta)(x-p)+\beta(Tv-p)\|^2\\
&=(1-\beta)\|x-p\|^2+\beta\|Tv-p\|^2-\beta(1-\beta)\|x-Tv\|^2\\
&\leq(1-\beta)\|x-p\|^2+\beta(\|v-p\|^2+\kappa\|v-Tv\|^2)-\beta(1-\beta)\|x-Tv\|^2
\endaligned
\end{equation}
Substituting (6.6) and (6.7) in (6.8), we yield
$$
\aligned \|y-p\|^2&\leq(1-\beta)\|x-p\|^2+\beta\|x-p\|^2+\beta\alpha[\kappa-(1-\alpha)]\|x-Tx\|^2\\
&\quad+\beta\kappa(1-\alpha)\|x-Tv\|^2+\beta\kappa\alpha(L^2\alpha^2+\alpha-1)\|x-Tx\|^2\\
&\quad-\beta(1-\beta)\|x-Tv\|^2\\
&=\|x-p\|^2+\beta[\kappa(1-\alpha)-(1-\beta)]\|x-Tv\|^2\\
&\quad+\alpha\beta[\kappa L^2\alpha^2+(\kappa+1)\alpha-1]\|x-Tx\|^2.
\endaligned
$$
Since $\beta\leq\kappa\alpha+(1-\kappa)$, then,
$$
\aligned
\|y-p\|^2&\leq\|x-p\|^2+\alpha\beta[\kappa L^2\alpha^2+(\kappa+1)\alpha-1]\|x-Tx\|^2\\
&=\|x-p\|^2-\alpha\beta[1-(\kappa+1)\alpha-\kappa
L^2\alpha^2]\|x-Tx\|^2.
\endaligned
$$
Therefore, $p\in C_x$, i.e., $F(T)\subset C_x$. Taking $u\in C_x$,
$\forall x\in C$, we get
\begin{equation}
\|y-u\|^2\leq\|x-u\|^2-\alpha\beta[1-(\kappa+1)\alpha-\kappa
L^2\alpha^2]\|x-Tx\|^2.
\end{equation}
On the other hand,
\begin{equation}
\|y-u\|^2=\|y-x\|^2+2\langle y-x,x-u\rangle+\|x-u\|^2.
\end{equation}
Combining (6.9) and (6.10), we have
\begin{equation}
\alpha[1-(\kappa+1)\alpha-\kappa L^2\alpha^2]\|x-Tx\|^2\leq 2\langle
x-Tv,x-u\rangle
\end{equation}
It follows that,
\begin{equation}
\aligned \alpha[1-(\kappa+1)\alpha-\kappa
L^2\alpha^2]\|x-Tx\|^2&\leq 2\langle
x-Tv,x-u\rangle\\
&\leq2\|x-u\|\|x-Tv\|\\
&\leq 2\|x-u\|(\|x-Tx\|+\|Tx-Tv\|)\\
&\leq 2(1+L\alpha)\|x-u\|\|x-Tx\|.
\endaligned
\end{equation}
Noting that the function $f(x)=1-(\kappa+1)t-\kappa L^2 t^2$ is
strictly decreasing in $t\in (0,1)$, we infer that
$$
1-(\kappa+1)\alpha-\kappa L^2\alpha^2>0.
$$
Then, from (6.12), we have
\begin{equation}
\|x-Tx\|\leq\frac{2(1+L\alpha)}{\alpha[1-(\kappa+1)\alpha-\kappa
L^2\alpha^2]}\|x-u\|
\end{equation}
which implies $u\in {^*C_x}$. So, $F(T)\subset C_x\subset {^*C_x}$.
\end{proof}

\begin{lemma}
Let $C$ be a nonempty closed convex subset of a real Hilbert space
$H$. Let $T$ be a nonexpansive mapping of \ $C$ into itself with
$F(T)\neq\O$. $\forall x\in C$, $0<\beta\leq 1$ and $0\leq\alpha\leq
1$ let
$$
\aligned
&v=(1-\alpha)x+\alpha Tx\\
&y=(1-\beta)x+\beta Tv,
\endaligned
$$
$$
C_x=\{z\in C: \|z-y\|^2\leq\|z-x\|^2+\beta(\|v\|^2-\|x\|^2+2\langle
x-v,z\rangle)\}
$$
and
$$
^*C_x=\{z\in C: \beta(1-\alpha)\|x-Tx\|\leq
3\|x-z\|+\alpha\|Tx-z\|\}.
$$
Then, $C_x$ is a closed convex subset with $F(T)\subset C_x\subset
{^*C_x}$.
\end{lemma}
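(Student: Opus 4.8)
The plan is to establish the three claims separately, after one simplification that makes the defining inequality of $C_x$ transparent. The key observation is that $\|v\|^2-\|x\|^2+2\langle x-v,z\rangle=\|v-z\|^2-\|x-z\|^2$, so that the condition $z\in C_x$ is equivalent to the reformulated inequality $\|z-y\|^2\le(1-\beta)\|z-x\|^2+\beta\|v-z\|^2$. Closedness and convexity of $C_x$ then follow at once from Lemma 2.3: the original defining inequality is already of the form treated there, with the fixed vector taken to be $2\beta(x-v)$ and the constant taken to be $\beta(\|v\|^2-\|x\|^2)$.

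To show $F(T)\subset C_x$, I would take $p\in F(T)$, write $y-p=(1-\beta)(x-p)+\beta(Tv-p)$, and expand by Lemma 2.1(ii) to obtain $\|y-p\|^2=(1-\beta)\|x-p\|^2+\beta\|Tv-p\|^2-\beta(1-\beta)\|x-Tv\|^2$. Since $T$ is nonexpansive and $Tp=p$, I have $\|Tv-p\|=\|Tv-Tp\|\le\|v-p\|$; discarding the nonpositive last term then yields precisely the reformulated inequality at $z=p$, so $p\in C_x$.

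The inclusion $C_x\subset {}^*C_x$ is the substantive part, and I would obtain it by squeezing $\|y-x\|$ between a lower and an upper bound. For the lower bound I use only nonexpansiveness: from $\|x-Tx\|\le\|x-Tv\|+\|Tv-Tx\|\le\|x-Tv\|+\|v-x\|=\|x-Tv\|+\alpha\|x-Tx\|$ I get $(1-\alpha)\|x-Tx\|\le\|x-Tv\|$, and multiplying by $\beta$ together with $\|y-x\|=\beta\|Tv-x\|$ gives $\beta(1-\alpha)\|x-Tx\|\le\|y-x\|$. For the upper bound I invoke membership: let $u\in C_x$. Since $1-\beta\le1$ and $\beta\le1$, the reformulated inequality gives $\|y-u\|^2\le\|x-u\|^2+\|v-u\|^2$, hence $\|y-u\|\le\|x-u\|+\|v-u\|$. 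Bounding $\|v-u\|\le(1-\alpha)\|x-u\|+\alpha\|Tx-u\|$ via $v=(1-\alpha)x+\alpha Tx$ and then using $\|y-x\|\le\|y-u\|+\|u-x\|$ produces $\|y-x\|\le(3-\alpha)\|x-u\|+\alpha\|Tx-u\|\le3\|x-u\|+\alpha\|Tx-u\|$. Combining the two bounds gives $\beta(1-\alpha)\|x-Tx\|\le3\|x-u\|+\alpha\|Tx-u\|$, i.e.\ $u\in {}^*C_x$.

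The only genuine insight is the reformulation of the defining inequality; after that both inclusions reduce to standard triangle-inequality estimates and one application of Lemma 2.1(ii). I expect the delicate bookkeeping to be the emergence of the constant $3$: it should come exactly from the three successive triangle inequalities (the split of $\|y-u\|$, the split of $\|v-u\|$, and $\|y-x\|\le\|y-u\|+\|u-x\|$) after discarding the convex weights $1-\beta$ and $\beta$, and I would be careful to verify that the $\|Tx-u\|$ term keeps coefficient exactly $\alpha$ throughout.
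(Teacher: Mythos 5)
Your proof is correct and follows essentially the same route as the paper's: Lemma 2.3 for closedness and convexity, the convexity estimate $\|y-p\|^2\le(1-\beta)\|x-p\|^2+\beta\|Tv-p\|^2$ with nonexpansiveness for $F(T)\subset C_x$, and the same chain of triangle inequalities (via $(1-\alpha)\|x-Tx\|\le\|x-Tv\|$ and $\|v-u\|\le(1-\alpha)\|x-u\|+\alpha\|Tx-u\|$) for $C_x\subset{}^*C_x$. The only differences are cosmetic: you combine through $\|y-x\|\le\|y-u\|+\|u-x\|$ and keep the exact convex-combination identity, where the paper routes through $\|x-u\|+\|y-u\|$ with a $\sqrt{\beta}$ in the intermediate bound, but both land on the same constant $3$.
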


\begin{proof}
By Lemma 2.3, we see that $C_x$ is closed and convex. Let $p\in
F(T)$, for any $x\in C$ we have,
$$
\aligned\|y-p\|^2&=\|(1-\beta)(x-p)+\beta(Tv-p)\|^2\\
&\leq(1-\beta)\|x-p\|^2+\beta\|v-p\|^2\\
&=\|x-p\|^2+\beta(\|v-p\|^2-\|x-p\|^2)\\
&=\|x-p\|^2+\beta(\|v\|^2-\|x\|^2+2\langle x-v,p\rangle)
\endaligned
$$
Hence, $F(T)\subset C_x$. Let $u\in C_x$, then we get
\begin{equation}
\aligned \|y-u\|^2&\leq\|x-u\|^2+\beta(\|v\|^2-\|x\|^2+2\langle x-v,u\rangle)\\
&=\|x-u\|^2+\beta(\|v-x\|^2+2\langle x-v,u-x\rangle)\\
&\leq\|x-u\|^2+\beta\|v-u\|^2\\
&\leq[\|x-u\|+\sqrt{\beta}\|v-u\|]^2.
\endaligned
\end{equation}
It follows that,
\begin{equation}
\aligned\|y-u\|&\leq\|x-u\|+\sqrt{\beta}\|v-u\|\\
&=\|x-u\|+\sqrt{\beta}\|(1-\alpha)(x-u)+\alpha(Tx-u)\|\\
&\leq\|x-u\|+\sqrt{\beta}(1-\alpha)\|x-u\|+\sqrt{\beta}\alpha\|Tx-u\|\\
&\leq 2\|x-u\|+\alpha\|Tx-u\|.
\endaligned
\end{equation}
Besides,
\begin{equation}
\aligned \|x-Tx\|&\leq\|x-Tv\|+\|Tv-Tx\|\\
&\leq\frac{1}{\beta}\|x-y\|+\|v-x\|\\
&\leq\frac{1}{\beta}[\|x-u\|+\|y-u\|]+\alpha\|x-Tx\|.
\endaligned
\end{equation}
Combining (6.15) and (6.16), we obtain
\begin{equation}
\aligned \beta(1-\alpha)\|x-Tx\|&\leq\|x-u\|+\|y-u\|\\
&\leq 3\|x-u\|+\alpha\|Tx-u\|.
\endaligned
\end{equation}
From (6.17), we can conclude $u\in {^*C_x}$. So, $F(T)\subset
C_x\subset {^*C_x}$.
\end{proof}

\begin{lemma}
Let $C$ be a nonempty closed convex subset of a real Hilbert space
$H$. Let $T$ be a nonexpansive mapping of \ $C$ into itself with
$F(T)\neq\O$. $\forall x\in C$, $0\leq\alpha<1$ and
$0\leq\frac{\alpha}{1-\alpha}<\beta\leq 1$, let
$$
\aligned
&v=(1-\alpha)x+\alpha Tx\\
&y=(1-\beta)x+\beta Tv,
\endaligned
$$
$$
C_x=\{z\in C: \|z-y\|^2\leq\|z-x\|^2+\beta(\|v\|^2-\|x\|^2+2\langle
x-v,z\rangle)\}
$$
and
$$
^*C_x=\{z\in C: \|x-Tx\|\leq
\frac{4}{\beta(1-\alpha)-\alpha}\|x-z\|\}.
$$
Then, $C_x$ is a closed convex subset with $F(T)\subset C_x\subset
{^*C_x}$.
\end{lemma}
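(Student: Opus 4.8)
The plan is to follow the proof of Lemma 6.3 almost verbatim, since the defining set $C_x$ here is literally the same and $T$ is again nonexpansive; only the auxiliary set ${^*C_x}$ and the admissible ranges of $\alpha,\beta$ differ. First I would invoke Lemma 2.3 to obtain that $C_x$ is closed and convex, and reproduce the inclusion $F(T)\subset C_x$ unchanged: for $p\in F(T)$, convexity of $\|\cdot\|^2$ (the inequality form of Lemma 2.1(ii)) together with $\|Tv-p\|=\|Tv-Tp\|\le\|v-p\|$ gives $\|y-p\|^2\le\|x-p\|^2+\beta(\|v\|^2-\|x\|^2+2\langle x-v,p\rangle)$, so $p\in C_x$.

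The substantive part is the inclusion $C_x\subset {^*C_x}$. Fixing $u\in C_x$, I would rerun exactly the estimates (6.14)--(6.17) of the previous proof: the chain (6.14)--(6.15) yields $\|y-u\|\le 2\|x-u\|+\alpha\|Tx-u\|$ (using $\sqrt{\beta}\le 1$ and $1-\alpha\le 1$), while the chain in (6.16), via $\|x-Tv\|=\frac{1}{\beta}\|x-y\|$, nonexpansiveness $\|Tv-Tx\|\le\|v-x\|$, and $\|v-x\|=\alpha\|x-Tx\|$, gives $\beta(1-\alpha)\|x-Tx\|\le\|x-u\|+\|y-u\|$. Combining these two exactly as in (6.17) produces $\beta(1-\alpha)\|x-Tx\|\le 3\|x-u\|+\alpha\|Tx-u\|$. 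Every one of these steps uses only $0<\beta\le 1$, $\alpha<1$, and nonexpansiveness of $T$, so all remain valid under the present hypotheses.

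The one genuinely new step — the point where the two lemmas diverge — is to eliminate the $\|Tx-u\|$ term so that only $\|x-z\|$ survives on the right-hand side. For this I would apply the triangle inequality $\|Tx-u\|\le\|x-Tx\|+\|x-u\|$ to the displayed bound, obtaining $\beta(1-\alpha)\|x-Tx\|\le(3+\alpha)\|x-u\|+\alpha\|x-Tx\|$, and then transpose the $\alpha\|x-Tx\|$ term to reach $[\beta(1-\alpha)-\alpha]\|x-Tx\|\le(3+\alpha)\|x-u\|$. Here the hypothesis $\frac{\alpha}{1-\alpha}<\beta$ is precisely what guarantees $\beta(1-\alpha)-\alpha>0$, so the division is legitimate; finally, bounding $3+\alpha\le 4$ (valid since $\alpha<1$) gives $\|x-Tx\|\le\frac{4}{\beta(1-\alpha)-\alpha}\|x-u\|$, i.e. $u\in {^*C_x}$, which completes $F(T)\subset C_x\subset {^*C_x}$.

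I do not expect any serious obstacle: the content is essentially that of Lemma 6.3, and the only insight required is recognizing that the constraint on $\beta$ has been repackaged as $\frac{\alpha}{1-\alpha}<\beta$ exactly so that the denominator $\beta(1-\alpha)-\alpha$ is positive, and that replacing $\|Tx-u\|$ by $\|x-Tx\|+\|x-u\|$ is what converts the mixed estimate of the earlier lemma into the single-term bound claimed here.
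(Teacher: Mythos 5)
Your proposal is correct and follows essentially the same route as the paper: the paper likewise reuses the $F(T)\subset C_x$ argument and the chain $\|y-u\|\le\|x-u\|+\sqrt{\beta}\|v-u\|$ from Lemma 6.3, bounds $\alpha\|Tx-u\|$ by $\alpha\|x-Tx\|+\alpha\|x-u\|$, combines with $\beta(1-\alpha)\|x-Tx\|\le\|x-u\|+\|y-u\|$ to get $\beta(1-\alpha)\|x-Tx\|\le 4\|x-u\|+\alpha\|x-Tx\|$, and transposes using $\beta(1-\alpha)-\alpha>0$. The only cosmetic difference is that you apply the triangle inequality after forming the mixed estimate (getting $3+\alpha\le 4$) rather than inside the bound for $\|y-u\|$; the content is identical.
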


\begin{proof}
By Lemma 2.3, we see that $C_x$ is closed and convex and similarly,
we can prove $F(T)\subset C_x$. Let $u\in C_x$, then we get
\begin{equation}
\aligned \|y-u\|^2&\leq\|x-u\|^2+\beta(\|v\|^2-\|x\|^2+2\langle x-v,u\rangle)\\
&\leq[\|x-u\|+\sqrt{\beta}\|v-u\|]^2.
\endaligned
\end{equation}
It follows that,
\begin{equation}
\aligned\|y-u\|&\leq\|x-u\|+\sqrt{\beta}\|v-u\|\\
&=\|x-u\|+\sqrt{\beta}\|(1-\alpha)(x-u)+\alpha(Tx-u)\|\\
&\leq\|x-u\|+\sqrt{\beta}(1-\alpha)\|x-u\|+\sqrt{\beta}\alpha\|Tx-u\|\\
&\leq 2\|x-u\|+\alpha\|Tx-u\|\\
&\leq 2\|x-u\|+\alpha\|Tx-x\|+\alpha\|x-u\|\\
&\leq 3\|x-u\|+\alpha\|x-Tx\|.
\endaligned
\end{equation}
Besides,
\begin{equation}
\aligned \|x-Tx\|&\leq\|x-Tv\|+\|Tv-Tx\|\\
&\leq\frac{1}{\beta}\|x-y\|+\|v-x\|\\
&\leq\frac{1}{\beta}[\|x-u\|+\|y-u\|]+\alpha\|x-Tx\|.
\endaligned
\end{equation}
Combining (6.19) and (6.20), we obtain
\begin{equation}
\aligned \beta(1-\alpha)\|x-Tx\|&\leq\|x-u\|+\|y-u\|\\
&\leq 4\|x-u\|+\alpha\|x-Tx\|.
\endaligned
\end{equation}
Hence,
\begin{equation}
\|x-Tx\|\leq \frac{4}{\beta(1-\alpha)-\alpha}\|x-u\|.
\end{equation}
From (6.22), we can conclude $u\in {^*C_x}$. So, $F(T)\subset
C_x\subset {^*C_x}$
\end{proof}

\begin{theorem}
Let $C$ be a nonempty closed convex subset of a real Hilbert space
$H$. Let $T:C\rightarrow C$ be a Lipschitz pseudo-contraction such
that $L\geq 1$ and $F(T)\neq\O$. Suppose that $\{\alpha_n\}$ and
$\{\beta_n\}$ are two real sequences in $(0,1)$ satisfying the
conditions:
\\
($C_1$) $0<\beta_n\leq\alpha_n$, $\forall n\geq 0$;\\
($C_2$) $\liminf_{n\rightarrow\infty}\alpha_n\geq\alpha'>0$;\\
($C_3$)
$\limsup_{n\rightarrow\infty}\alpha_n\leq\alpha<\frac{1}{\sqrt{1+L^2}+1},
\forall n\geq 0$.
\\
Let a sequence $\{x_n\}$ be generated by
$$
\left\{\begin{array}{l}x_0\in C \ chosen \ arbitrarily\\
v_n=(1-\alpha_n)x_n+\alpha_n Tx_n\\
y_n=(1-\beta_n)x_n+\beta_n Tv_n\\
C_n=\{z\in C:\|y_n-z\|^2\leq\|x_n-z\|^2 -\alpha_n\beta_n
(1-2\alpha_n-L^2 \alpha_n^2)\|x_n-Tx_n\|^2\}\\
Q_n=\{z\in C:\langle z-x_n,x_n-x_0\rangle\geq 0\}\\
x_{n+1}=P_{C_n^*\cap Q_n}\\
\end{array}\right.
$$
where $C_n^*$ is a closed convex set with $F(T)\subset C_n^*\subset
C_n$. Then, $\{x_n\}$ converges strongly to a fixed point
$P_{F(T)}x_0$.
\end{theorem}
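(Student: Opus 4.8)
The plan is to recognize this iteration as an instance of the framework Theorem 3.1, exactly as Theorems 5.1--5.3 were derived, taking the comparison set from Lemma 6.1 and the demiclosedness hypothesis from Lemma 2.5. First I would introduce
$$
{}^*C_n=\Big\{z\in C:\|x_n-Tx_n\|\leq\frac{2(1+L\alpha_n)}{\alpha_n(1-2\alpha_n-L^2\alpha_n^2)}\|x_n-z\|\Big\},
$$
which is precisely the set denoted $C_n$ in Theorem 3.1 once we set $\mu_n=\frac{2(1+L\alpha_n)}{\alpha_n(1-2\alpha_n-L^2\alpha_n^2)}$ and take the nonnegative function $\theta_n(z)\equiv 0$, so that $\lim_{n\to\infty}\theta_n(x_{n+1})=0$ holds trivially. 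Since $(C_1)$--$(C_3)$ supply the hypotheses $0<\beta_n\leq\alpha_n<\frac{1}{\sqrt{1+L^2}+1}$ of Lemma 6.1, applying that lemma with $x=x_n$, $\alpha=\alpha_n$, $\beta=\beta_n$, $v=v_n$, $y=y_n$ shows that $C_n$ is closed and convex and that $F(T)\subset C_n\subset {}^*C_n$. Combined with the standing assumption $F(T)\subset C_n^*\subset C_n$, this produces the chain $F(T)\subset C_n^*\subset C_n\subset {}^*C_n$ required by Theorem 3.1, with $C_n^*$ in its own role and ${}^*C_n$ in the role of the set $C_n$ there.

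The substantive step is the uniform bound $L+1\leq\mu_n\leq\mu<\infty$. The decisive algebraic identity is $\frac{1}{\sqrt{1+L^2}+1}=\frac{\sqrt{1+L^2}-1}{L^2}$, from which one reads off the equivalence $1-2\alpha-L^2\alpha^2>0\iff\alpha<\frac{1}{\sqrt{1+L^2}+1}$; this is exactly why condition $(C_3)$ is posed with that bound. By $(C_2)$ and $(C_3)$ there are an index $N$ and constants $0<a\leq b<\frac{1}{\sqrt{1+L^2}+1}$ with $\alpha_n\in[a,b]$ for all $n\geq N$, so for such $n$ the denominator exceeds the positive constant $a(1-2b-L^2b^2)$ while the numerator is at most $2(1+Lb)$, making $\mu_n$ uniformly bounded; enlarging $\mu$ to absorb the finitely many earlier terms yields $\mu_n\leq\mu<\infty$ for all $n$. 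The lower bound is immediate, since $2(1+L\alpha_n)\geq 2L\alpha_n$ and $\alpha_n(1-2\alpha_n-L^2\alpha_n^2)\leq\alpha_n$ give $\mu_n\geq 2L\geq L+1$ using $L\geq 1$.

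Finally, because $T$ is $L$-Lipschitz it is demi-continuous, and it is pseudo-contractive by hypothesis, so Lemma 2.5 guarantees that $F(T)$ is a closed convex subset of $C$ and that $I-T$ is demiclosed at zero. With $F(T)$ nonempty, closed and convex, $I-T$ demiclosed at zero, $L+1\leq\mu_n\leq\mu<\infty$, $\theta_n\equiv 0$, and $F(T)\subset C_n^*\subset {}^*C_n$ for all $n$, every hypothesis of Theorem 3.1 is met, and I would invoke it directly to conclude $x_n\to P_{F(T)}x_0$.

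I expect the only genuine difficulty to be the uniform upper bound on $\mu_n$: one must use the $\liminf$/$\limsup$ form of $(C_2)$ and $(C_3)$, rather than termwise inequalities, to keep $\alpha_n$ away from both $0$ and the critical value $\frac{1}{\sqrt{1+L^2}+1}$, since it is this separation that prevents the denominator $\alpha_n(1-2\alpha_n-L^2\alpha_n^2)$ from degenerating. Once that bound is secured, the conclusion is a direct application of Lemma 6.1, Lemma 2.5, and Theorem 3.1.
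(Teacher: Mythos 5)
Your proposal is correct and follows essentially the same route as the paper: introduce ${}^*C_n$ with coefficient $\mu_n=\frac{2(1+L\alpha_n)}{\alpha_n(1-2\alpha_n-L^2\alpha_n^2)}$, invoke Lemma 6.1 to get $F(T)\subset C_n^*\subset C_n\subset{}^*C_n$, bound $\mu_n$ uniformly, and conclude via Lemma 2.5 and Theorem 3.1. You are in fact somewhat more careful than the paper, which simply writes $\mu_n\leq\frac{2(1+L\alpha)}{\alpha'(1-2\alpha-L^2\alpha^2)}$ as if $(C_2)$ and $(C_3)$ were termwise bounds and does not verify $\mu_n\geq L+1$ at all.
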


\begin{proof}
From the assumption, we have
$\frac{2(1+L\alpha_n)}{\alpha_n(1-2\alpha_n-L^2\alpha_n^2)}\leq\frac{2(1+L\alpha)}{\alpha'(1-2\alpha-L^2\alpha^2)}<\infty$.
Let $^*C_n=\{z\in
C:\|x_n-Tx_n\|\leq\frac{2(1+L\alpha_n)}{\alpha_n(1-2\alpha_n-L^2\alpha_n^2)}\|x_n-z\|\}$,
then using Lemma 6.1, $F(T)\subset C_n^*\subset C_n\subset {^*C_n}$.
Hence, by Lemma 2.5 and Theorem 3.1, $x_n\rightarrow P_{F(T)}x_0$.
\end{proof}

\begin{remark}
In this theorem, let $C_n^*=C_n$, then we yield a theorem which was
also introduced in \cite{11}.
\end{remark}

\begin{theorem}
Let $C$ be a nonempty closed convex subset of a real Hilbert space
$H$ and $T$ a Lipschitz pseudo-contraction from $C$ into itself with
the Lipschitz constant $L\geq 1$. Assume sequence $\{\tau_n\}\subset
[\tau,1]$ with $\tau\in (0,1]$, sequence $\{\alpha_n\}\subset [a,b]$
with $a,b\in (0,\frac{1}{L+1})$ and sequence $\{\beta_n\}\subset
(0,1)$ satisfies that $\beta_n\leq\alpha_n$. Let $\{x_n\}$ be a
sequence generated by the following manner:
\begin{equation}
\left\{\begin{array}{l}x_0\in C \ chosen \ arbitrarily\\
v_n=(1-\alpha_n) x_n+\alpha_n Tx_n\\
y_n=(1-\beta_n)x_n+\beta_n Tv_n\\
C_n'=\{z\in C: \tau_n\alpha_n[1-(1+L)\alpha_n]\|x_n-T x_n\|^2\leq\langle x_n-z,v_n-Tv_n\rangle\}\\
C_n''=\{z\in
C:\|y_n-z\|^2\leq\|x_n-z\|^2-\alpha_n\beta_n(1-2\alpha_n-L^2\alpha_n^2)\|x_n-Tx_n\|^2\}\\
C_n=C_n'\cap C_n''\\
Q_n=\{z\in C: \langle z-x_n,x_n-x_0\rangle\geq 0\}\\
x_{n+1}=P_{C_n^*\cap Q_n}x_0
\end{array}\right.
\end{equation}
where $C_n^*$ is a closed convex set with $F(T)\subset C_n^*\subset
C_n$. Then $\{x_n\}$ converges strongly to $P_{F(T)}x_0$.
\end{theorem}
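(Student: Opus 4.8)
The plan is to reduce the statement to Theorem 3.1, exactly along the lines of the proof of Theorem 5.1, exploiting the fact that intersecting with the extra constraint $C_n''$ only shrinks the feasible set and is therefore harmless for the upper estimate that drives the convergence. The key observation I would start from is the elementary inclusion $C_n=C_n'\cap C_n''\subset C_n'$.

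I would then identify each constraint with a lemma already proved. The set $C_n'$ is exactly the set $C_x$ of Lemma 5.1 evaluated at $x=x_n$, $\alpha=\alpha_n$, $\tau=\tau_n$, with $y=v_n=(1-\alpha_n)x_n+\alpha_n Tx_n$; so Lemma 5.1 applies verbatim and gives that $C_n'$ is closed convex with $F(T)\subset C_n'\subset {}^*C_n'$, where
\[
{}^*C_n'=\Big\{z\in C:\ \|x_n-Tx_n\|\le\frac{(L+1)\alpha_n+1}{\tau_n\alpha_n[1-(L+1)\alpha_n]}\,\|x_n-z\|\Big\}.
\]
Similarly $C_n''$ is the set $C_x$ of Lemma 6.1 with $y=y_n$, so Lemma 2.3 makes it closed convex and the fixed-point inclusion of Lemma 6.1 gives $F(T)\subset C_n''$. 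Hence $C_n$ is closed convex, $F(T)\subset C_n$ (so the hypothesis $F(T)\subset C_n^*\subset C_n$ is admissible), and I obtain the chain
\[
F(T)\subset C_n^*\subset C_n\subset C_n'\subset {}^*C_n'.
\]

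Next I would verify that ${}^*C_n'$ has precisely the form required by Theorem 3.1, with $\mu_n=\frac{(L+1)\alpha_n+1}{\tau_n\alpha_n[1-(L+1)\alpha_n]}$ and $\theta_n\equiv 0$. A one-line estimate gives $\mu_n>L+1$, and the hypotheses $\alpha_n\in[a,b]\subset(0,\frac{1}{L+1})$ and $\tau_n\ge\tau$ furnish the uniform upper bound $\mu_n\le\frac{(L+1)b+1}{\tau a[1-(L+1)b]}<\infty$; moreover $\theta_n(x_{n+1})=0\to0$ trivially. Since $T$ is $L$-Lipschitz it is demicontinuous and pseudo-contractive, so Lemma 2.5 yields that $F(T)$ is closed convex and $I-T$ is demiclosed at zero. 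With ${}^*C_n'$ and $C_n^*$ playing the roles of the sets $C_n$ and $C_n^*$ in Theorem 3.1, every hypothesis of that theorem is satisfied and it delivers $x_n\to P_{F(T)}x_0$.

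The computation is almost entirely bookkeeping; the one point I would handle with care is the apparent clash between the $\alpha$-ranges of the two lemmas being combined. Lemma 6.1 is stated under $\alpha<\frac{1}{\sqrt{1+L^2}+1}$, which is strictly tighter than the range $\frac{1}{L+1}$ assumed here and is needed to control its own upper set ${}^*C_x$. I would emphasize that I never invoke that upper set: the convergence-producing bound comes solely from $C_n'$ through Lemma 5.1, whose hypothesis $\alpha_n\in(0,\frac{1}{L+1})$ is exactly what is given, while from Lemma 6.1 I borrow only the inclusion $F(T)\subset C_n''$, which is just the fixed-point inequality valid for every $\alpha$ regardless of the sign of $1-2\alpha-L^2\alpha^2$. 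This is why the weaker range on $\alpha_n$ suffices and the argument remains consistent.
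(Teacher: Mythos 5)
Your proof is correct and follows essentially the same route as the paper, which simply observes $F(T)\subset C_n^*\subset C_n\subset C_n'$ and invokes Theorem 5.1 (whose own proof is exactly your combination of Lemma 5.1 with Theorem 3.1). Your additional verification that $F(T)\subset C_n''$ still holds on the wider range of $\alpha_n$ --- needed so that a set $C_n^*$ with $F(T)\subset C_n^*\subset C_n$ actually exists --- is a point the paper leaves implicit, and your justification (the fixed-point inequality for $C_n''$ requires only $\beta_n\le\alpha_n$, not a sign condition on $1-2\alpha_n-L^2\alpha_n^2$) is sound.
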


\begin{proof}
Obviously, $F(T)\subset C_n^*\subset C_n\subset C_n'$. Then, by
Theorem 5.1, we can obtain $x_n\rightarrow P_{F(T)}x_0$.
\end{proof}

\begin{theorem}
Let $C$ be a nonempty closed convex subset of a real Hilbert space
$H$. Let $T:C\rightarrow C$ be a $\kappa-$strict-pseudo-contraction
for some $0\leq\kappa<1$ such that $F(T)\neq\O$. Suppose that
$\{\alpha_n\}$ and $\{\beta_n\}$ are two real sequences in $[0,1]$
satisfying $0<\alpha'\leq\alpha_n\leq\alpha<\frac{2}{\sqrt{4\kappa
L^2+(\kappa+1)^2}+(\kappa+1)}$ and
$0<\beta_n\leq\kappa\alpha_n+(1-\kappa)$. Suppose sequence $\{x_n\}$
be generated by
$$
\left\{\begin{array}{l}x_0\in C \ chosen \ arbitrarily\\
v_n=(1-\alpha_n)x_n+\alpha_n Tx_n\\
y_n=(1-\beta_n)x_n+\beta_n Tv_n\\
C_n=\{z\in C:\|y_n-z\|^2\leq\|x_n-z\|^2 -\alpha_n\beta_n
(1-(\kappa+1)\alpha_n-\kappa L^2 \alpha_n^2)\|x_n-Tx_n\|^2\}\\
Q_n=\{z\in C:\langle z-x_n,x_n-x_0\rangle\geq 0\}\\
x_{n+1}=P_{C_n^*\cap Q_n}\\
\end{array}\right.
$$
where $C_n^*$ is a closed convex set with $F(T)\subset C_n^*\subset
C_n$. Then, $\{x_n\}$ converges strongly to a fixed point
$P_{F(T)}x_0$.
\end{theorem}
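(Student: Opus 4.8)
The plan is to recognize that this theorem is the \emph{strict pseudo-contraction} counterpart of Theorem 5.3 and Theorem 6.1, and that its proof is obtained by feeding Lemma 6.2 into the master framework Theorem 3.1. Lemma 6.2 is stated precisely for the data of this theorem: with $v_n,y_n$ as defined and with $C_n$ the set appearing in the iteration, it produces the auxiliary set
$$
{}^*C_n=\Bigl\{z\in C:\ \|x_n-Tx_n\|\leq\frac{2(1+L\alpha_n)}{\alpha_n[1-(\kappa+1)\alpha_n-\kappa L^2\alpha_n^2]}\,\|x_n-z\|\Bigr\}
$$
together with the chain $F(T)\subset C_n\subset {}^*C_n$. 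Since by hypothesis $F(T)\subset C_n^*\subset C_n$, this upgrades to $F(T)\subset C_n^*\subset C_n\subset {}^*C_n$, which is exactly the inclusion matching the hypotheses of Theorem 3.1 (with $\theta_n\equiv 0$, so that $\lim_n\theta_n(x_{n+1})=0$ holds trivially).

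First I would check that the coefficient
$$
\mu_n:=\frac{2(1+L\alpha_n)}{\alpha_n[1-(\kappa+1)\alpha_n-\kappa L^2\alpha_n^2]}
$$
is uniformly bounded above by a finite constant. The only delicate point is the denominator: writing $g(t)=1-(\kappa+1)t-\kappa L^2t^2$, one observes that the prescribed threshold $\tfrac{2}{\sqrt{4\kappa L^2+(\kappa+1)^2}+(\kappa+1)}$ is precisely the positive root of $g$ (equivalently, the positive solution of $\kappa L^2t^2+(\kappa+1)t-1=0$ after rationalizing). Since $g$ is strictly decreasing and $\alpha'\leq\alpha_n\leq\alpha<\tfrac{2}{\sqrt{4\kappa L^2+(\kappa+1)^2}+(\kappa+1)}$, we get $g(\alpha_n)\geq g(\alpha)>0$ and $\alpha_n\geq\alpha'$, whence
$$
\mu_n\leq\frac{2(1+L\alpha)}{\alpha'[1-(\kappa+1)\alpha-\kappa L^2\alpha^2]}=:\mu<\infty .
$$
Thus $\{\mu_n\}$ is bounded above as required. (Should one wish to meet the lower bound $L+1\leq\mu_n$ literally, it suffices to replace $\mu_n$ by $\max\{\mu_n,L+1\}$, which only enlarges ${}^*C_n$ and preserves every inclusion above.)

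Next I would record that, as a $\kappa$-strict pseudo-contraction, $T$ is $L$-Lipschitz, and that Lemma 2.6 guarantees $F(T)$ is closed and convex and that $I-T$ is demiclosed at zero. With these facts all hypotheses of Theorem 3.1 are in place: $T$ is $L$-Lipschitz, $F(T)\neq\emptyset$ is closed convex, $I-T$ is demiclosed at zero, $L+1\leq\mu_n\leq\mu<\infty$, $\theta_n\equiv 0$ with $\lim_n\theta_n(x_{n+1})=0$, and $F(T)\subset C_n^*\subset {}^*C_n$ with $C_n^*$ closed and convex. Invoking Theorem 3.1 then yields $x_n\to P_{F(T)}x_0$, which is the assertion.

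The argument is therefore largely bookkeeping, and the single genuine computation is the uniform boundedness of $\mu_n$. The main obstacle there is to see that the stated threshold is exactly the root of $g$, which is what keeps the denominator bounded away from $0$ uniformly in $n$; but this positivity was already exploited inside the proof of Lemma 6.2, so no idea beyond reusing that monotonicity observation is needed.
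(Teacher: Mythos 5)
Your proposal is correct and follows essentially the same route as the paper's own proof: invoke Lemma 6.2 to obtain $F(T)\subset C_n^*\subset C_n\subset {}^*C_n$, bound the coefficient by $\frac{2(1+L\alpha)}{\alpha'[1-(\kappa+1)\alpha-\kappa L^2\alpha^2]}<\infty$ using that $\alpha$ lies below the positive root of $1-(\kappa+1)t-\kappa L^2t^2$, and then conclude via Lemma 2.6 and Theorem 3.1. Your extra remarks (identifying the threshold as the root of the quadratic, and patching the lower bound $L+1\leq\mu_n$ by passing to $\max\{\mu_n,L+1\}$) only make explicit details the paper leaves implicit.
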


\begin{proof}
From the assumption, we have
$\frac{2(1+L\alpha_n)}{\alpha_n[1-(\kappa+1)\alpha_n-\kappa
L^2\alpha_n^2]}\leq\frac{2(1+L\alpha)}{\alpha'[1-(\kappa+1)\alpha-\kappa
L^2\alpha^2]}<\infty$. Let $^*C_n=\{z\in
C:\|x_n-Tx_n\|\leq\frac{2(1+L\alpha_n)}{\alpha_n[1-(\kappa+1)\alpha_n-\kappa
L^2\alpha_n^2]}\|x_n-z\|\}$, then using Lemma 6.2, $F(T)\subset
C_n^*\subset C_n\subset {^*C_n}$. Hence, by Lemma 2.6 and Theorem
3.1, $x_n\rightarrow P_{F(T)}x_0$.
\end{proof}

\begin{theorem}
Let $C$ be a nonempty closed convex subset of a real Hilbert space
$H$. Let $T:C\rightarrow C$ be a $\kappa-$strict-pseudo-contraction
for some $0\leq\kappa<1$ such that $F(T)\neq\O$. Suppose that
$\{\alpha_n\}$ and $\{\beta_n\}$ are two real sequences in $[0,1]$
satisfying $0<\alpha\leq\alpha_n\leq 1$ and
$0\leq\beta_n\leq\kappa\alpha_n+(1-\kappa)$. Let a sequence
$\{x_n\}$ be generated by
$$
\left\{\begin{array}{l}x_0\in C \ chosen \ arbitrarily\\
v_n=(1-\alpha_n)x_n+\alpha_n Tx_n\\
y_n=(1-\beta_n)x_n+\beta_n Tv_n\\
C_n'=\{z\in C: \|v_n-z\|^2\leq\|x_n-z\|^2+\alpha_n(\kappa-(1-\alpha_n))\|x_n-Tx_n\|^2\}\\
C_n''=\{z\in C:\|y_n-z\|^2\leq\|x_n-z\|^2 -\alpha_n\beta_n
(1-(\kappa+1)\alpha_n-\kappa L^2 \alpha_n^2)\|x_n-Tx_n\|^2\}\\
C_n=C_n'\cap C_n''\\
Q_n=\{z\in C:\langle z-x_n,x_n-x_0\rangle\geq 0\}\\
x_{n+1}=P_{C_n^*\cap Q_n}\\
\end{array}\right.
$$
where $C_n^*$ is a closed convex set with $F(T)\subset C_n^*\subset
C_n$ and $\{\alpha_n\}$ is chosen such that
$0<\alpha\leq\alpha_n\leq 1$. Then, $\{x_n\}$ converges strongly to
$P_{F(T)}x_0$.
\end{theorem}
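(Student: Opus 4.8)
The plan is to reduce this theorem to the Mann-type result Theorem 5.3, exactly as Theorem 6.2 was reduced to Theorem 5.1. The decisive observation is that the set $C_n'$ appearing in the intersection $C_n=C_n'\cap C_n''$ is precisely the set supplied by Lemma 5.3: with the substitutions $x\mapsto x_n$, $y\mapsto v_n$ and $\alpha\mapsto\alpha_n$, the Mann iterate of Lemma 5.3 is exactly $v_n=(1-\alpha_n)x_n+\alpha_n Tx_n$, and its associated set $C_x$ is exactly $C_n'$. Since the hypothesis $0<\alpha\leq\alpha_n\leq 1$ is precisely the requirement $\alpha_n\in(0,1]$ of Lemma 5.3, that lemma gives $F(T)\subset C_n'$ together with the closedness and convexity of $C_n'$.

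First I would check that $C_n^*$ is legitimately defined, i.e. that $F(T)\subset C_n=C_n'\cap C_n''$. The inclusion $F(T)\subset C_n'$ is handed to us by Lemma 5.3 as above. For $F(T)\subset C_n''$ I would reuse only the first half of the computation in the proof of Lemma 6.2, which establishes $\|y-p\|^2\leq\|x-p\|^2-\alpha\beta[1-(\kappa+1)\alpha-\kappa L^2\alpha^2]\|x-Tx\|^2$ for every $p\in F(T)$; this step uses only $\beta\leq\kappa\alpha+(1-\kappa)$, which is guaranteed here by $0\leq\beta_n\leq\kappa\alpha_n+(1-\kappa)$, and does not invoke the stronger upper bound on $\alpha$ from Lemma 6.2. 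Hence $F(T)\subset C_n''$, and therefore $F(T)\subset C_n$, so the prescribed $C_n^*$ with $F(T)\subset C_n^*\subset C_n$ indeed exists.

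Next, from $C_n=C_n'\cap C_n''\subset C_n'$ and the hypothesis $F(T)\subset C_n^*\subset C_n$ I obtain $F(T)\subset C_n^*\subset C_n'$. Thus $C_n^*$ is a closed convex set sandwiched between $F(T)$ and the Mann-set $C_n'$ of Lemma 5.3. Relabeling $v_n$ as the Mann iterate $(1-\alpha_n)x_n+\alpha_n Tx_n$, the recursion of the present theorem---the definition of $Q_n$ and the projection $x_{n+1}=P_{C_n^*\cap Q_n}x_0$---coincides verbatim with the recursion of Theorem 5.3 for this admissible choice of $C_n^*$. Invoking Theorem 5.3 (which itself rests on Lemma 2.6 and Theorem 3.1) then yields $x_n\to P_{F(T)}x_0$.

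The main obstacle is not computational but conceptual: one must recognize that the extra Ishikawa data $(\beta_n,y_n,C_n'')$ play no role in the convergence beyond ensuring $F(T)\subset C_n$, and that shrinking $C_n$ to the intersection preserves the sandwich $F(T)\subset C_n^*\subset C_n'$ needed to fall back on the Mann theorem. The only genuine check is that $F(T)\subset C_n''$ survives under the weaker bound $\alpha_n\leq 1$ allowed here, which the argument above confirms.
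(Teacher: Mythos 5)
Your proposal is correct and follows essentially the same route as the paper, whose proof simply observes $F(T)\subset C_n^*\subset C_n\subset C_n'$ and invokes Theorem 5.3. Your additional check that $F(T)\subset C_n''$ (so that $C_n^*$ exists), using only the first half of Lemma 6.2's computation and the bound $\beta_n\leq\kappa\alpha_n+(1-\kappa)$, is a detail the paper leaves implicit but is verified correctly.
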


\begin{proof}
It is clear that $F(T)\subset C_n^*\subset C_n\subset C_n'$. Hence,
by Theorem 5.3, $x_n\rightarrow P_{F(T)}x_0$.
\end{proof}

Using our method, we can yield at least four different CQ algorithms
for Ishikawa's iteration process for nonexpansive mappings.

\begin{theorem}
Let $C$ be a nonempty closed convex subset of a real Hilbert space
$H$. Let $T:C\rightarrow C$ be a nonexpansive mapping with
$F(T)\neq\O$. Suppose that $\{\alpha_n\}$ and $\{\beta_n\}$ are two
real sequences in $[0,1]$ satisfying
$0<\alpha'\leq\alpha_n\leq\alpha<1$ and $0<\beta_n\leq 1$. Let a
sequence $\{x_n\}$ be generated by
$$
\left\{\begin{array}{l}x_0\in C \ chosen \ arbitrarily\\
v_n=(1-\alpha_n)x_n+\alpha_n Tx_n\\
y_n=(1-\beta_n)x_n+\beta_n Tv_n\\
C_n=\{z\in C:\|z-y_n\|^2\leq\|z-x_n\|^2 -\alpha_n\beta_n
(1-\alpha_n)\|x_n-Tx_n\|^2\}\\
Q_n=\{z\in C:\langle z-x_n,x_n-x_0\rangle\geq 0\}\\
x_{n+1}=P_{C_n^*\cap Q_n}\\
\end{array}\right.
$$
where $C_n^*$ is a closed convex set with $F(T)\subset C_n^*\subset
C_n$. Then, $\{x_n\}$ converges strongly to a fixed point
$P_{F(T)}x_0$.
\end{theorem}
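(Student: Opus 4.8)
The plan is to read this theorem as the specialization $\kappa=0$ of Theorem 6.3. A nonexpansive mapping is exactly a $\kappa$-strict pseudo-contraction with $\kappa=0$, and it is Lipschitz with $L=1$, so the first task is purely bookkeeping: substitute $\kappa=0$ and $L=1$ everywhere in Lemma 6.2 and Theorem 6.3 and check that the data collapse to the present hypotheses. The penalty coefficient $1-(\kappa+1)\alpha_n-\kappa L^2\alpha_n^2$ becomes $1-\alpha_n$, so the set $C_n$ defined here is literally the $C_n$ of Theorem 6.3. The admissible range $\alpha_n\leq\alpha<\frac{2}{\sqrt{4\kappa L^2+(\kappa+1)^2}+(\kappa+1)}$ becomes $\alpha_n\leq\alpha<\frac{2}{\sqrt{1}+1}=1$, matching our constraint, and the constraint $\beta_n\leq\kappa\alpha_n+(1-\kappa)$ becomes $\beta_n\leq1$, matching $0<\beta_n\leq1$.

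Next I would manufacture the auxiliary outer set exactly as in the proof of Theorem 6.3. Putting $\kappa=0$, $L=1$ in the $^*C_x$ of Lemma 6.2 gives the Lipschitz-type constant $\frac{2(1+\alpha_n)}{\alpha_n(1-\alpha_n)}$, so I would set
$$
{}^*C_n=\{z\in C:\|x_n-Tx_n\|\leq\frac{2(1+\alpha_n)}{\alpha_n(1-\alpha_n)}\|x_n-z\|\}
$$
and invoke Lemma 6.2 to obtain the inclusion chain $F(T)\subset C_n^*\subset C_n\subset{}^*C_n$. Since $\alpha_n\geq\alpha'$ and $1-\alpha_n\geq1-\alpha$ are both positive, multiplying gives $\alpha_n(1-\alpha_n)\geq\alpha'(1-\alpha)$, whence $\frac{2(1+\alpha_n)}{\alpha_n(1-\alpha_n)}\leq\frac{2(1+\alpha)}{\alpha'(1-\alpha)}<\infty$; thus the coefficient $\mu_n=\frac{2(1+\alpha_n)}{\alpha_n(1-\alpha_n)}$ is uniformly bounded above, and a one-line check ($1+\alpha_n^2\geq0$) shows $\mu_n\geq2=L+1$, so it meets both the upper and the lower requirement that Theorem 3.1 imposes on $\{\mu_n\}$.

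Finally I would verify the remaining hypotheses of Theorem 3.1. Closedness and convexity of $F(T)$ together with demiclosedness of $I-T$ at zero follow from Lemma 2.6 with $\kappa=0$ (this is just the classical demiclosedness principle for nonexpansive mappings); here the penalty term plays the role of $\theta_n\equiv0$, so the hypothesis $\lim_{n\to\infty}\theta_n(x_{n+1})=0$ holds trivially. With the inclusion chain and the bounded $\{\mu_n\}$ in hand, Theorem 3.1 delivers $x_n\to P_{F(T)}x_0$.

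I expect no genuine obstacle: the entire argument is the algebraic collapse of Lemma 6.2 at $\kappa=0,\ L=1$ plus the elementary bound $\alpha_n(1-\alpha_n)\geq\alpha'(1-\alpha)$ ensuring a uniform Lipschitz-type constant. If one wanted a self-contained proof not citing Lemma 6.2, one could reprove $F(T)\subset C_n$ directly from Lemma 2.1(ii) using $\|Tx_n-p\|\leq\|x_n-p\|$, and then recover the outer bound on $\|x_n-Tx_n\|$ by the same expand-and-compare estimate as in Lemma 6.2; but that only duplicates work already completed.
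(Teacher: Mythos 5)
Your proposal is correct and is exactly the paper's route: the paper proves this theorem with the single line ``By Theorem 6.3, we can prove the conclusion,'' i.e.\ the specialization $\kappa=0$ (and $L=1$) of Theorem 6.3 and Lemma 6.2, which you carry out explicitly and correctly. The only difference is that you supply the bookkeeping (the collapse of the penalty coefficient to $1-\alpha_n$, the bound $\mu_n=\frac{2(1+\alpha_n)}{\alpha_n(1-\alpha_n)}\leq\frac{2(1+\alpha)}{\alpha'(1-\alpha)}$, and the appeal to Lemma 2.6 and Theorem 3.1) that the paper leaves implicit.
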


\begin{proof}
By Theorem 6.3, we can prove the conclusion.
\end{proof}

\begin{theorem}
Let $C$ be a nonempty closed convex subset of a Hilbert space $H$
and $T$ a nonexpansive mapping of \ $C$ into itself such that
$F(T)\neq \O$. Assume that $\{\alpha_n\}$ and $\{\beta_n\}$ are
sequences in $[0,1]$ such that $0<\beta\leq\beta_n\leq 1$ and
$\alpha_n\rightarrow 0$. Define a sequence $\{x_n\}$ in $C$ by
algorithm:
$$
\left\{\begin{array}{l}x_0\in C \ chosen \ arbitrarily\\
v_n=(1-\alpha_n)x_n+\alpha_n Tx_n\\
y_n=(1-\beta_n)x_n+\beta_n Tv_n\\
C_n=\{z\in C: \|z-y_n\|^2\leq\|z-x_n\|^2+\beta_n(\|v_n\|^2-\|x_n\|^2+2\langle x_n-v_n,z\rangle)\}\\
Q_n=\{z\in C: \langle z-x_n,x_n-x_0\rangle\geq 0\}\\
x_{n+1}=P_{C_n^*\cap Q_n}x_0
\end{array}\right.
$$
where $C_n^*$ is a closed convex set with $F(T)\subset C_n^*\subset
C_n$. Then, $\{x_n\}$ converges strongly to $P_{F(T)}x_0$.
\end{theorem}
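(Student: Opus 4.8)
The plan is to obtain this theorem as a direct specialization of the master Theorem 3.1, in exactly the same spirit as the earlier Ishikawa-type results (Theorems 6.1--6.4). The set $C_n$ appearing in the algorithm is precisely the set $C_x$ of Lemma 6.4 (which coincides with that of Lemma 6.3) evaluated at $x=x_n$ with parameters $\alpha_n,\beta_n$; thus the analytic heart of the argument is already packaged into that lemma, and the work reduces to verifying that the standing hypotheses of Theorem 3.1 hold under the present parameter regime $\alpha_n\to 0$, $\beta\leq\beta_n\leq 1$.

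First I would record the preliminaries. Since $T$ is nonexpansive it is a $0$-strict pseudo-contraction, so Lemma 2.6 guarantees that $F(T)$ is closed and convex and that $I-T$ is demiclosed at zero; these are two of the hypotheses required by Theorem 3.1. The Lipschitz constant is $L=1$, so the condition $L+1\leq\mu_n$ reads $\mu_n\geq 2$. Next I would construct the auxiliary set
$$
{^*C_n}=\Big\{z\in C:\|x_n-Tx_n\|\leq\frac{4}{\beta_n(1-\alpha_n)-\alpha_n}\|x_n-z\|\Big\},
$$
set $\mu_n=4/[\beta_n(1-\alpha_n)-\alpha_n]$ and $\theta_n\equiv 0$, and verify the sandwich $F(T)\subset C_n^*\subset C_n\subset{^*C_n}$. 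The inclusion $F(T)\subset C_n$ holds for every $n$ by the convexity computation in Lemma 6.4 (which needs only $\beta_n\in(0,1]$), while $C_n\subset{^*C_n}$ is exactly the conclusion of Lemma 6.4, valid once $\alpha_n/(1-\alpha_n)<\beta_n$. Because $\alpha_n\to 0$ while $\beta_n\geq\beta>0$, this last condition holds for all sufficiently large $n$, and then $\beta_n(1-\alpha_n)-\alpha_n\geq\beta-(\beta+1)\alpha_n$, which is eventually at least $\beta/2$; hence $\mu_n\leq 8/\beta<\infty$ is uniformly bounded. Moreover $\beta_n(1-\alpha_n)-\alpha_n\leq\beta_n\leq 1\leq 2$ forces $\mu_n\geq 2=L+1$, and with $\theta_n\equiv 0$ the requirement $\lim_{n\to\infty}\theta_n(x_{n+1})=0$ is automatic. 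Invoking Theorem 3.1 then yields $x_n\to P_{F(T)}x_0$.

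The main obstacle I anticipate is precisely the boundedness of $\mu_n$, and it is here that the hypothesis $\alpha_n\to 0$ (rather than $\alpha_n$ bounded away from $0$, as in Theorems 6.1--6.4) does the essential work: the denominator $\beta_n(1-\alpha_n)-\alpha_n$ is controlled from below only because $\alpha_n$ becomes negligible against the fixed lower bound $\beta$ on $\beta_n$. One technical point to address is that this lower bound, and hence the estimate $\|x_n-Tx_n\|\leq\mu_n\|x_n-x_{n+1}\|$, is available only for $n$ beyond some index $N$. This is harmless, however: the part of the proof of Theorem 3.1 that establishes boundedness of $\{x_n\}$ and $\|x_{n+1}-x_n\|\to 0$ uses only $F(T)\subset C_n^*$ together with the projection identities, all of which hold for every $n$, whereas the conclusion $\|x_n-Tx_n\|\to 0$ is an asymptotic statement needing the Lipschitz estimate only for large $n$. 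The eventual validity of the bound therefore suffices to drive the demiclosedness argument and close the proof.
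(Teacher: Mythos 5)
Your argument is correct, and it reaches the conclusion by the same overall mechanism as the paper (sandwich $F(T)\subset C_n^*\subset C_n\subset{}^*C_n$ feeding into Theorem 3.1), but via a different auxiliary set. The paper invokes Lemma 6.3, whose estimate $\beta_n(1-\alpha_n)\|x_n-Tx_n\|\le 3\|x_n-z\|+\alpha_n\|Tx_n-z\|$ holds for \emph{every} $n$ with $\beta_n>0$, $\alpha_n\in[0,1]$; it then takes $\mu_n=3/[\beta_n(1-\alpha_n)]$ and a genuinely nonzero $\theta_n(z)=\frac{\alpha_n}{\beta_n(1-\alpha_n)}\|Tx_n-z\|$, using $\alpha_n\to 0$ together with boundedness of $\|Tx_n-x_{n+1}\|$ to force $\theta_n(x_{n+1})\to 0$. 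You instead invoke Lemma 6.4, which absorbs the $\|Tx-z\|$ term into the coefficient at the price of the restriction $\alpha_n/(1-\alpha_n)<\beta_n$; this lets you take $\theta_n\equiv 0$, but the inclusion $C_n\subset{}^*C_n$ is then only available for $n$ beyond some index. Your discussion of why that is harmless is the right one: well-definedness, boundedness of $\{x_n\}$ and $\|x_{n+1}-x_n\|\to 0$ in Theorem 3.1 use only $F(T)\subset C_n^*\cap Q_n$ and the projection identities, which hold for all $n$, while the Lipschitz-type estimate is needed only asymptotically. The trade-off is essentially aesthetic: the paper's route exercises the $\theta_n(z)$ device built into Theorem 3.1 and needs the side remark that $\|Tx_n-x_{n+1}\|$ is bounded, whereas yours keeps $\theta_n\equiv 0$ but must argue around the finitely many indices where Lemma 6.4 does not apply (and where your $\mu_n$ could even be undefined or negative — worth saying explicitly that those indices are simply discarded). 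Both are sound; note only that the paper itself also makes a ``without loss of generality, $\alpha_n\le\alpha<1$'' reduction, so neither proof is entirely free of the eventual-index issue.
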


\begin{proof}
First observing that $\alpha_n\rightarrow 0$, we can conclude
$\alpha_n\neq 1$ since $n$ is sufficient big. So, without losing
generality, we can assume $0\leq\alpha_n\leq\alpha<1$. Combining
with the assumption of $\beta_n$, we have $\frac{3}{\beta_n
(1-\alpha_n)}\leq\frac{3}{\beta(1-\alpha)}<\infty$. Easily, we can
prove $\|Tx_n-x_{n+1}\|$ is bounded. Then,
$\lim_{n\rightarrow\infty}\frac{\alpha_n}{\beta_n
(1-\alpha_n)}\|Tx_n-x_{n+1}\|=0$. Let $^*C_n=\{z\in C:
\beta_n(1-\alpha_n)\|x_n-Tx_n\|\leq
3\|x_n-z\|+\alpha_n\|Tx_n-z\|\}$, then using Lemma 6.3, $F(T)\subset
C_n^*\subset C_n\subset {^*C_n}$. Hence, by Lemma 2.6 and Theorem
3.1, $x_n\rightarrow P_{F(T)}x_0$.
\end{proof}

\begin{remark}
In this theorem, let $C_n^*=C_n$, then we obtain an algorithm which
was also proposed in \cite{9}.
\end{remark}

\begin{theorem}
Let $C$ be a nonempty closed convex subset of a Hilbert space $H$
and $T$ a nonexpansive mapping of \ $C$ into itself such that
$F(T)\neq \O$. Assume that $\{\alpha_n\}$ and $\{\beta_n\}$ are
sequences in $[0,1]$ such that $0\leq\alpha_n\leq\alpha<1$ and
$0\leq\frac{\alpha_n}{1-\alpha_n}\leq\frac{\alpha}{1-\alpha}<\beta\leq\beta_n\leq
1$. Define a sequence $\{x_n\}$ in $C$ by algorithm:
$$
\left\{\begin{array}{l}x_0\in C \ chosen \ arbitrarily\\
v_n=(1-\alpha_n)x_n+\alpha_n Tx_n\\
y_n=(1-\beta_n)x_n+\beta_n Tv_n\\
C_n=\{z\in C: \|z-y_n\|^2\leq\|z-x_n\|^2+\beta_n(\|v_n\|^2-\|x_n\|^2+2\langle x_n-v_n,z\rangle)\}\\
Q_n=\{z\in C: \langle z-x_n,x_n-x_0\rangle\geq 0\}\\
x_{n+1}=P_{C_n^*\cap Q_n}x_0
\end{array}\right.
$$
where $C_n^*$ is a closed convex set with $F(T)\subset C_n^*\subset
C_n$. Then, $\{x_n\}$ converges strongly to $P_{F(T)}x_0$.
\end{theorem}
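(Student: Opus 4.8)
The plan is to reduce Theorem 6.6 to the master convergence result, Theorem 3.1, exactly as every preceding theorem in this section has been reduced. The entire burden is to verify the three hypotheses of Theorem 3.1: that the operator $I-T$ is demiclosed at zero, that $F(T)$ is closed and convex, and that there is an auxiliary closed convex set $^*C_n$ with $F(T)\subset C_n^*\subset C_n\subset {^*C_n}$ whose defining constant is a bounded sequence $\mu_n$ with $L+1\le\mu_n\le\mu<\infty$. Here $T$ is nonexpansive, so $L=1$ works, and Lemma 2.6 (applied with $\kappa=0$) supplies both the demiclosedness of $I-T$ at zero and the closedness and convexity of $F(T)$. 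Thus the real work is constructing $^*C_n$ and checking the uniform bound, and for this the key tool is Lemma 6.4.

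First I would observe that the hypotheses on $\{\alpha_n\}$ and $\{\beta_n\}$ are precisely the per-step hypotheses of Lemma 6.4: we have $0\le\alpha_n\le\alpha<1$ and $0\le\frac{\alpha_n}{1-\alpha_n}\le\frac{\alpha}{1-\alpha}<\beta\le\beta_n\le 1$, which gives $\frac{\alpha_n}{1-\alpha_n}<\beta_n$, i.e. $\beta_n(1-\alpha_n)-\alpha_n>0$, the positivity needed for the denominator in Lemma 6.4. Applying Lemma 6.4 with $x=x_n$, $\alpha=\alpha_n$, $\beta=\beta_n$ identifies the set $C_n$ in the theorem with the set $C_x$ of the lemma, and produces
$$
{^*C_n}=\Bigl\{z\in C:\ \|x_n-Tx_n\|\le\frac{4}{\beta_n(1-\alpha_n)-\alpha_n}\|x_n-z\|\Bigr\}
$$
together with the inclusion $F(T)\subset C_n\subset {^*C_n}$. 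Since $C_n^*$ is assumed to satisfy $F(T)\subset C_n^*\subset C_n$, chaining these gives $F(T)\subset C_n^*\subset C_n\subset {^*C_n}$.

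Next I would establish the uniform bound on the coefficient. The function $t\mapsto\frac{4}{t}$ is decreasing, so it suffices to bound $\beta_n(1-\alpha_n)-\alpha_n$ away from zero below. From $\alpha_n\le\alpha$ and $\beta_n\ge\beta$ one obtains, after a short monotonicity argument, a uniform positive lower bound of the form $\beta(1-\alpha)-\alpha>0$ (this is exactly the content of the standing assumption $\frac{\alpha}{1-\alpha}<\beta$), whence $\frac{4}{\beta_n(1-\alpha_n)-\alpha_n}\le\frac{4}{\beta(1-\alpha)-\alpha}<\infty$. Setting this finite constant as $\mu$ (and noting $\mu\ge L+1=2$, which is automatic since $\|x_n-Tx_n\|\le\mu\|x_n-z\|$ must hold for $z\in F(T)$) places us squarely in the situation of Theorem 3.1 with $\theta_n\equiv 0$.

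The main obstacle, and the only point requiring care, is the uniform lower bound $\beta_n(1-\alpha_n)-\alpha_n\ge\beta(1-\alpha)-\alpha$: although $t\mapsto\beta_n(1-t)-t=\beta_n-(\beta_n+1)t$ decreases in the $\alpha_n$ slot and the expression increases in the $\beta_n$ slot, one must confirm that substituting the extreme values $\alpha_n\mapsto\alpha$ and $\beta_n\mapsto\beta$ simultaneously does not lose the sign; this follows because the worst case is $\alpha_n=\alpha,\beta_n=\beta$ and the hypothesis guarantees positivity there. Once this bound is in hand the conclusion is immediate: by Lemma 2.6 the demiclosedness and the closed convexity of $F(T)$ hold, and by Theorem 3.1 the sequence $\{x_n\}$ converges strongly to $P_{F(T)}x_0$. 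I would therefore write the proof as a single short paragraph citing Lemma 6.4, the uniform-bound estimate, Lemma 2.6, and Theorem 3.1, mirroring the proof of Theorem 6.5.
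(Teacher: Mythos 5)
Your proposal is correct and follows essentially the same route as the paper: it invokes Lemma 6.4 to get $F(T)\subset C_n^*\subset C_n\subset{^*C_n}$ with ${^*C_n}=\{z\in C:\|x_n-Tx_n\|\le\frac{4}{\beta_n(1-\alpha_n)-\alpha_n}\|x_n-z\|\}$, bounds the coefficient by $\frac{4}{\beta(1-\alpha)-\alpha}<\infty$, and concludes via Lemma 2.6 and Theorem 3.1. The only difference is that you spell out the monotonicity argument behind the uniform bound, which the paper leaves implicit.
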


\begin{proof}
From the assumption, we have
$\frac{4}{\beta_n(1-\alpha_n)-\alpha_n}\leq\frac{4}{\beta(1-\alpha)-\alpha}<\infty$.
Let $^*C_n=\{z\in C: \|x_n-Tx_n\|\leq
\frac{4}{\beta_n(1-\alpha_n)-\alpha_n}\|x_n-z\|\}$, then using Lemma
6.4, $F(T)\subset C_n^*\subset C_n\subset {^*C_n}$. Hence, by Lemma
2.6 and Theorem 3.1, $x_n\rightarrow P_{F(T)}x_0$.
\end{proof}

\begin{theorem}
Let $C$ be a nonempty closed convex subset of a Hilbert space $H$
and $T$ a nonexpansive mapping of \ $C$ into itself such that
$F(T)\neq \O$. Suppose $x_0\in C$ chosen arbitrarily and $\{x_n\}$
is given by
$$
\left\{\begin{array}{l}v_n=(1-\alpha_n)x_n+\alpha_n Tx_n\\
y_n=(1-\beta_n)x_n+\beta_n Tv_n\\
C_n'=\{z\in C: \|z-v_n\|\leq\|z-x_n\|\}\\
C_n''=\{z\in C: \|z-y_n\|^2\leq\|z-x_n\|^2+\beta_n(\|v_n\|^2-\|x_n\|^2+2\langle x_n-v_n,z\rangle)\}\\
C_n=C_n'\cap C_n''\\
Q_n=\{z\in C: \langle z-x_n,x_n-x_0\rangle\geq 0\}\\
x_{n+1}=P_{C_n^*\cap Q_n}x_0
\end{array}\right.
$$
where $C_n^*$ is a closed convex set with $F(T)\subset C_n^*\subset
C_n$ and $\{\alpha_n\}$ and $\{\beta_n\}$ are chosen such that
$0<\alpha\leq\alpha_n\leq 1$ and $0\leq\beta_n\leq 1$. Then,
$\{x_n\}$ converges strongly to $P_{F(T)}x_0$.
\end{theorem}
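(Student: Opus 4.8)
The plan is to obtain this theorem as an immediate corollary of Theorem 4.3, exactly in the spirit in which Theorem 6.2 was reduced to Theorem 5.1 and Theorem 6.4 to Theorem 5.3. The guiding principle of the generalized CQ framework is that strong convergence is already forced by a single ``good'' set $C_n'$ whose star-set bound controls $\|x_n-Tx_n\|$; intersecting with any further closed convex set that still contains $F(T)$ merely shrinks the feasible region and cannot disturb the conclusion. Here the good set is the Mann-type set $C_n'$.

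First I would record the inclusion chain. Since $C_n=C_n'\cap C_n''$ we have $C_n\subset C_n'$, and together with the standing hypothesis $F(T)\subset C_n^*\subset C_n$ this gives
\[
F(T)\subset C_n^*\subset C_n\subset C_n'.
\]
Thus $C_n^*$ is a closed convex set with $F(T)\subset C_n^*\subset C_n'$. Next I would identify $C_n'$ with the set treated in Theorem 4.3: writing $v_n=(1-\alpha_n)x_n+\alpha_n Tx_n$, the set
\[
C_n'=\{\,z\in C:\|z-v_n\|\leq\|z-x_n\|\,\}
\]
is precisely the Mann-type CQ set of Theorem 4.3 with $v_n$ playing the role of $y_n$, and the remaining hypotheses ($T$ nonexpansive, $F(T)\neq\O$, and $0<\alpha\leq\alpha_n\leq 1$) coincide verbatim. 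For completeness one records $F(T)\subset C_n'$ from Lemma 2.1(ii) and nonexpansiveness, namely $\|v_n-p\|^2\leq\|x_n-p\|^2-\alpha_n(1-\alpha_n)\|x_n-Tx_n\|^2\leq\|x_n-p\|^2$ for $p\in F(T)$, and one notes via Lemma 2.3 that $C_n''$ is closed and convex, so that $C_n^*\cap Q_n$ is a genuine closed convex set on which $P_{C_n^*\cap Q_n}x_0$ is well defined.

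With the chain $F(T)\subset C_n^*\subset C_n'$ in hand and $x_{n+1}=P_{C_n^*\cap Q_n}x_0$, I would simply apply Theorem 4.3, taking $C_n^*$ as its intermediate closed convex set relative to $C_n'$. Every hypothesis of Theorem 4.3 is then met word for word, and it delivers $x_n\rightarrow P_{F(T)}x_0$.

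There is no genuinely hard step, and the only point deserving care is structural rather than computational: one should resist extracting convergence from the Halpern--Ishikawa set $C_n''$ directly, as was done in Theorems 6.6 and 6.7. The present hypotheses $0<\alpha\leq\alpha_n\leq 1$ and $0\leq\beta_n\leq 1$ match neither of those theorems, since here $\alpha_n$ need not tend to $0$ and nothing forces $\beta_n(1-\alpha_n)-\alpha_n>0$; consequently the star-sets of Lemmas 6.3 and 6.4 degenerate, their coefficients blowing up or their $\theta_n$ failing to vanish. This is exactly why the algorithm pairs $C_n''$ with the Mann set $C_n'$: it is $C_n'$, not $C_n''$, that supplies the usable star-set bound, and the reduction to Theorem 4.3 above is the clean way to exploit it.
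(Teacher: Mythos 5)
Your proposal is correct and matches the paper's own proof, which likewise observes $F(T)\subset C_n^*\subset C_n\subset C_n'$ and invokes Theorem 4.3 with $v_n$ in the role of the Mann iterate $y_n$. The additional verifications you supply (that $C_n''$ is closed and convex via Lemma 2.3, and that $C_n''$ cannot itself drive the argument under these parameter hypotheses) are sound but not needed beyond what the paper records.
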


\begin{proof}
It is obvious that $F(T)\subset C_n^*\subset C_n\subset C_n'$.
Hence, by Theorem 4.3, $x_n\rightarrow P_{F(T)}x_0$.
\end{proof}

\section{Generalized CQ algorithms for Halpern' iteration
process}

In this section, we give some algorithms for Halpern's iteration
process. To prove the main theorems, we need the following lemmas.

\begin{lemma}
Let $C$ be a nonempty closed convex subset of a real Hilbert space
$H$. Let $T:C\rightarrow C$ be a Lipschitz pseudo-contractive
mapping with Lipschitz constant $L\geq 1$. $\forall x,x_0\in C$ and
$\alpha\in [0,1]$, let
$$
y=(1-\alpha)x_0+\alpha Tx
$$
and
$$C_x=\{z\in C: \|y-z\|^2\leq\|x-z\|^2+2(1-\alpha)\langle
x-x_0,z\rangle+\theta\}
$$
where
$\theta=(1-\alpha)(\|x_0\|^2-\|x\|^2)+\alpha\|x-Tx\|^2-\alpha(1-\alpha)\|x_0-Tx\|^2$.
Then, there holds $C_x$ is a closed convex set with $F(T)\subset
C_x$.
\end{lemma}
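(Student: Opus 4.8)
The plan is to follow exactly the pattern established in Lemmas 5.1--5.3 and 6.1--6.4: the closedness and convexity of $C_x$ will be immediate from Lemma 2.3, and the real content is showing $F(T)\subset C_x$. So the first step is to rewrite the defining inequality of $C_x$ in the form $\|y-z\|^2\leq\|x-z\|^2+\langle w,z\rangle+a$ for suitable $w=2(1-\alpha)(x-x_0)$ and constant $a=\theta$, and invoke Lemma 2.3 directly to conclude $C_x$ is closed and convex.

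For the inclusion $F(T)\subset C_x$, I would fix $p\in F(T)$ (so $Tp=p$) and compute $\|y-p\|^2$ directly from $y=(1-\alpha)x_0+\alpha Tx$. First I would write $y-p=(1-\alpha)(x_0-p)+\alpha(Tx-p)$ and apply the convex-combination identity of Lemma 2.1(ii) to obtain
\begin{equation*}
\|y-p\|^2=(1-\alpha)\|x_0-p\|^2+\alpha\|Tx-p\|^2-\alpha(1-\alpha)\|x_0-Tx\|^2.
\end{equation*}
The key estimate is the middle term: since $T$ is pseudo-contractive, inequality (1.4), namely $\langle Tx-Tp,x-p\rangle\leq\|x-p\|^2$ with $Tp=p$, gives control on $\|Tx-p\|^2$. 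The natural route is to use the well-known identity $\|Tx-p\|^2=\|Tx-x\|^2+2\langle Tx-x,x-p\rangle+\|x-p\|^2$ together with $\langle Tx-x,x-p\rangle\le 0$ (from (1.4)) to bound $\|Tx-p\|^2\le\|x-p\|^2+\|x-Tx\|^2$; this is precisely the pseudo-contractive analogue of the strict-pseudo-contraction bound used in Lemma 5.3.

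Substituting this bound and then expanding $\|x_0-p\|^2$ and $\|x-p\|^2$ in terms of $\langle\cdot,p\rangle$ and the norms $\|x_0\|^2,\|x\|^2$, I expect the coefficient of $\langle\cdot,p\rangle$ to collect into exactly $2(1-\alpha)\langle x-x_0,p\rangle$ and the constant remainder to collapse into the stated $\theta=(1-\alpha)(\|x_0\|^2-\|x\|^2)+\alpha\|x-Tx\|^2-\alpha(1-\alpha)\|x_0-Tx\|^2$, yielding $\|y-p\|^2\le\|x-p\|^2+2(1-\alpha)\langle x-x_0,p\rangle+\theta$, i.e. $p\in C_x$. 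The main obstacle is purely bookkeeping: tracking the inner-product and norm terms through the expansion so that the cross terms in $\langle\cdot,p\rangle$ combine into the single clean coefficient $2(1-\alpha)$ and everything independent of $p$ matches $\theta$ verbatim. Note that, unlike the earlier Mann/Ishikawa lemmas, this statement only asserts $F(T)\subset C_x$ (there is no auxiliary set $^*C_x$), so no reverse Lipschitz-type estimate bounding $\|x-Tx\|$ is needed here.
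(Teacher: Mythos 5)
Your proposal matches the paper's proof: closedness and convexity via Lemma 2.3, the expansion of $\|y-p\|^2$ via Lemma 2.1(ii), the bound $\|Tx-p\|^2\leq\|x-p\|^2+\|x-Tx\|^2$ from pseudo-contractivity (the paper applies (1.3) directly, which is equivalent to your route through (1.4)), and the same regrouping of terms into $2(1-\alpha)\langle x-x_0,p\rangle+\theta$. The approach and all key steps are essentially identical.
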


\begin{proof}
Obviously, by Lemma 2.3, we can conclude $C_x$ is closed and convex.
Let $p\in F(T)$. We have,
$$
\aligned \|y-p\|^2&=\|(1-\alpha)(x_0-p)+\alpha(Tx-p)\|^2\\
&=(1-\alpha)\|x_0-p\|^2+\alpha\|Tx-p\|^2-\alpha(1-\alpha)\|x_0-Tx\|^2\\
&\leq(1-\alpha)\|x_0-p\|^2+\alpha(\|x-p\|^2+\|x-Tx\|^2)-\alpha(1-\alpha)\|x_0-Tx\|^2\\
&=\|x-p\|^2+(1-\alpha)(\|x_0-p\|^2-\|x-p\|^2)\\
&\quad+\alpha\|x-Tx\|^2-\alpha(1-\alpha)\|x_0-Tx\|^2\\
&=\|x-p\|^2+(1-\alpha)(\|x_0\|^2-\|x\|^2+2\langle x-x_0,p\rangle)\\
&\quad+\alpha\|x-Tx\|^2-\alpha(1-\alpha)\|x_0-Tx\|^2\\
&=\|x-p\|^2+2(1-\alpha)\langle
x-x_0,p\rangle+(1-\alpha)(\|x_0\|^2-\|x\|^2)\\
&\quad+\alpha\|x-Tx\|^2-\alpha(1-\alpha)\|x_0-Tx\|^2
\endaligned
$$
Let
$\theta=(1-\alpha)(\|x_0\|^2-\|x\|^2)+\alpha\|x-Tx\|^2-\alpha(1-\alpha)\|x_0-Tx\|^2$.
Then,
$$
\|y-p\|^2\leq\|x-p\|^2+2(1-\alpha)\langle x-x_0,p\rangle+\theta
$$
Therefore, $p\in C_x$, i.e., $F(T)\subset C_x$.
\end{proof}

\begin{lemma}
Let $C$ be a nonempty closed convex subset of a real Hilbert space
$H$. Let $T:C\rightarrow C$ be a $\kappa-$strict pseudo-contractive
mapping for some $0\leq\kappa<1$ with $F(T)\neq\O$. $\forall
x,x_0\in C$ and $\alpha\in [0,1]$, let
$$
y=(1-\alpha)x_0+\alpha Tx,
$$
$$
C_x=\{z\in C: \|y-z\|^2\leq\|x-z\|^2+2(1-\alpha)\langle
x-x_0,z\rangle+\theta\}
$$
and
$$
^*C_x=\{z\in C:
\|x-Tx\|\leq\frac{2}{1-\sqrt{\alpha\kappa}}\|x-z\|+\frac{\sqrt{1-\alpha}}{1-\sqrt{\alpha\kappa}}(\|x_0-Tx\|+\|x_0-z\|)\}
$$
where
$\theta=(1-\alpha)(\|x_0\|^2-\|x\|^2)+\alpha\kappa\|x-Tx\|^2-\alpha(1-\alpha)\|x_0-Tx\|^2$.
Then, there holds $C_x$ is a closed convex set with $F(T)\subset
C_x\subset {^*C_x}$.
\end{lemma}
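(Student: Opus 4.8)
The plan is to follow the same two-part template used in Lemmas 5.1--6.2: first establish $F(T)\subset C_x$ by an expansion-of-the-norm computation with a fixed point $p$, then prove $C_x\subset {^*C_x}$ by specializing to an arbitrary $u\in C_x$ and extracting the Lipschitz-type bound. The inclusion $F(T)\subset C_x$ should be immediate: the definition of $C_x$ here is \emph{identical} to that in Lemma 7.1, and the value of $\theta$ differs only by the factor $\kappa$ multiplying $\alpha\|x-Tx\|^2$. So I would repeat the chain of equalities from the proof of Lemma 7.1, but at the one step where Lemma 7.1 bounds $\alpha\|Tx-p\|^2\leq\alpha(\|x-p\|^2+\|x-Tx\|^2)$ using pseudo-contractivity, I instead use the \emph{strict} pseudo-contraction inequality $\|Tx-p\|^2\leq\|x-p\|^2+\kappa\|x-Tx\|^2$ (valid since $p=Tp$). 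This produces exactly the stated $\theta$ with the $\kappa$ factor, giving $\|y-p\|^2\leq\|x-p\|^2+2(1-\alpha)\langle x-x_0,p\rangle+\theta$, hence $p\in C_x$.

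For the harder inclusion $C_x\subset{^*C_x}$, I would take $u\in C_x$, so that
$$
\|y-u\|^2\leq\|x-u\|^2+2(1-\alpha)\langle x-x_0,u\rangle+\theta.
$$
The strategy is to rewrite $\|y-u\|^2$ \emph{exactly} (no fixed point involved), using $y=(1-\alpha)x_0+\alpha Tx$ together with Lemma 2.1(ii), producing the algebraic identity analogous to the $p$-computation but with $u$ in place of $p$ and without the inequality step. Subtracting the inequality from this exact expression, the $2(1-\alpha)\langle x-x_0,u\rangle$ terms and the $\|x_0\|^2-\|x\|^2$ pieces inside $\theta$ should cancel, leaving a relation of the form
$$
\alpha\|Tx-u\|^2\leq\alpha\kappa\|x-Tx\|^2+\alpha\|x-u\|^2 \quad(\text{up to cross terms}),
$$
which is the strict-pseudo-contraction-type estimate that drove Lemma 5.3. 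The remaining work is then to convert $\|Tx-u\|^2\leq\|x-u\|^2+\kappa\|x-Tx\|^2$ into a bound on $\|x-Tx\|$, but now the Halpern geometry forces extra terms involving $x_0$: the triangle inequality $\|x-Tx\|\leq\|x-u\|+\|u-Tx\|$ and $\|x_0-Tx\|$ appear, which is precisely why $\,{}^*C_x$ carries the factor $\tfrac{1}{1-\sqrt{\alpha\kappa}}$ and the additive $\tfrac{\sqrt{1-\alpha}}{1-\sqrt{\alpha\kappa}}(\|x_0-Tx\|+\|x_0-u\|)$ term rather than a clean single-term Lipschitz bound.

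\textbf{The main obstacle} I anticipate is the bookkeeping that produces the exact coefficients $\tfrac{2}{1-\sqrt{\alpha\kappa}}$ and $\tfrac{\sqrt{1-\alpha}}{1-\sqrt{\alpha\kappa}}$. Unlike the Mann case (Lemma 5.3), here the auxiliary point $x_0$ never disappears, so after obtaining the quadratic-in-norms inequality I expect to complete a square or factor an expression of the form $(\text{something})^2 - \alpha\kappa(\|x-Tx\|)^2 \geq 0$; the appearance of $\sqrt{\alpha\kappa}$ strongly suggests that $\|x-Tx\|$ and a term like $\sqrt{\alpha\kappa}\,\|x-Tx\|$ must be collected on opposite sides before dividing through by $1-\sqrt{\alpha\kappa}>0$ (which is positive because $\alpha,\kappa<1$). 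I would therefore set up the inequality so that $\|x-Tx\|$ is isolated, bound the mixed terms $\|u-Tx\|$ and $\|x_0-Tx\|$ by triangle inequalities through $x$ and $x_0$, and track the $\sqrt{1-\alpha}$ weight that comes from the $(1-\alpha)$-coefficients in $y$. Once $\|x-Tx\|\bigl(1-\sqrt{\alpha\kappa}\bigr)$ is bounded above by $2\|x-u\|+\sqrt{1-\alpha}(\|x_0-Tx\|+\|x_0-u\|)$, dividing gives membership in ${}^*C_x$ and completes the proof.
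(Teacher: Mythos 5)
Your treatment of the inclusion $F(T)\subset C_x$ is correct and is exactly the paper's argument: expand $\|y-p\|^2$ with Lemma 2.1(ii) and replace the pseudo-contraction estimate of Lemma 7.1 by $\|Tx-p\|^2\le\|x-p\|^2+\kappa\|x-Tx\|^2$.

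For $C_x\subset{}^*C_x$, however, your concrete intermediate step leads away from the stated target. Your exact-subtraction computation is right: since the right-hand side of the $C_x$ inequality equals $(1-\alpha)\|x_0-u\|^2+\alpha\|x-u\|^2+\alpha\kappa\|x-Tx\|^2-\alpha(1-\alpha)\|x_0-Tx\|^2$ and $\|y-u\|^2=(1-\alpha)\|x_0-u\|^2+\alpha\|Tx-u\|^2-\alpha(1-\alpha)\|x_0-Tx\|^2$, everything cancels and one gets $\alpha\|Tx-u\|^2\le\alpha\|x-u\|^2+\alpha\kappa\|x-Tx\|^2$ with no cross terms. But once you divide by $\alpha$ and run the Lemma 5.3 argument, all dependence on $\alpha$ is gone and the best you can extract is $\|x-Tx\|\le\frac{2}{1-\kappa}\|x-u\|$ (or $\frac{2}{1-\sqrt{\kappa}}\|x-u\|$ via the square-root variant). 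When $\alpha<\kappa$ this coefficient is \emph{larger} than $\frac{2}{1-\sqrt{\alpha\kappa}}$, and the additive term $\frac{\sqrt{1-\alpha}}{1-\sqrt{\alpha\kappa}}(\|x_0-Tx\|+\|x_0-u\|)$ has no reason to absorb the deficit, so membership in the stated ${}^*C_x$ does not follow; the route also breaks at $\alpha=0$, which the hypothesis permits. (It does prove containment in a cleaner Mann-type star set, which would suffice for a \emph{variant} of the lemma feeding into Theorem 3.1 — but not for the lemma as stated.) The paper avoids this loss by never isolating $\|Tx-u\|^2$: it keeps the bound in convex-combination form,
$$
\|y-u\|^2\le(1-\alpha)\|x_0-u\|^2+\alpha\bigl(\|x-u\|+\sqrt{\kappa}\|x-Tx\|\bigr)^2\le\bigl[\sqrt{1-\alpha}\|x_0-u\|+\sqrt{\alpha}\|x-u\|+\sqrt{\alpha\kappa}\|x-Tx\|\bigr]^2,
$$
takes square roots to get a bound on $\|y-u\|$ (this is the only place $\sqrt{\alpha\kappa}$ and $\sqrt{1-\alpha}$ can arise), and then closes with $\|x-Tx\|\le\|x-u\|+\|y-u\|+\|y-Tx\|$ together with the identity $\|y-Tx\|=(1-\alpha)\|x_0-Tx\|$, after which moving $\sqrt{\alpha\kappa}\|x-Tx\|$ to the left and dividing by $1-\sqrt{\alpha\kappa}>0$ gives the claim. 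Your closing paragraph correctly guesses this endgame, but the step you actually propose — reduce to $\|Tx-u\|^2\le\|x-u\|^2+\kappa\|x-Tx\|^2$ and proceed as in Lemma 5.3 — cannot produce the coefficients $\frac{2}{1-\sqrt{\alpha\kappa}}$ and $\frac{\sqrt{1-\alpha}}{1-\sqrt{\alpha\kappa}}$, so the second inclusion is not established as written.
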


\begin{proof}
Obviously, by Lemma 2.3, we can conclude $C_x$ is closed and convex.
Let $p\in F(T)$. We have,
$$
\aligned \|y-p\|^2&=\|(1-\alpha)(x_0-p)+\alpha(Tx-p)\|^2\\
&=(1-\alpha)\|x_0-p\|^2+\alpha\|Tx-p\|^2-\alpha(1-\alpha)\|x_0-Tx\|^2\\
&\leq(1-\alpha)\|x_0-p\|^2+\alpha(\|x-p\|^2+\kappa\|x-Tx\|^2)-\alpha(1-\alpha)\|x_0-Tx\|^2\\
&=\|x-p\|^2+(1-\alpha)(\|x_0-p\|^2-\|x-p\|^2)\\
&\quad+\alpha\kappa\|x-Tx\|^2-\alpha(1-\alpha)\|x_0-Tx\|^2\\
&=\|x-p\|^2+(1-\alpha)(\|x_0\|^2-\|x\|^2+2\langle x-x_0,p\rangle)\\
&\quad+\alpha\kappa\|x-Tx\|^2-\alpha(1-\alpha)\|x_0-Tx\|^2\\
&=\|x-p\|^2+2(1-\alpha)\langle
x-x_0,p\rangle+(1-\alpha)(\|x_0\|^2-\|x\|^2)\\
&\quad+\alpha\kappa\|x-Tx\|^2-\alpha(1-\alpha)\|x_0-Tx\|^2
\endaligned
$$
Let
$\theta=(1-\alpha)(\|x_0\|^2-\|x\|^2)+\alpha\kappa\|x-Tx\|^2-\alpha(1-\alpha)\|x_0-Tx\|^2$.
Then,
$$
\|y-p\|^2\leq\|x-p\|^2+2(1-\alpha)\langle x-x_0,p\rangle+\theta
$$
Therefore, $p\in C_x$, i.e., $F(T)\subset C_x$. Let $u\in C_x$, then
$\forall x\in C$
\begin{equation}
\aligned \|y-u\|^2&\leq\|x-u\|^2+2(1-\alpha)\langle
x-x_0,u\rangle+(1-\alpha)(\|x_0\|^2-\|x\|^2)\\
&\quad+\alpha\kappa\|x-Tx\|^2-\alpha(1-\alpha)\|x_0-Tx\|^2\\
&=(1-\alpha)\|x_0-u\|^2+\alpha(\|x-u\|^2+\kappa\|x-Tx\|^2)\\
&\quad-\alpha(1-\alpha)\|x_0-Tx\|^2\\
&\leq(1-\alpha)\|x_0-u\|^2+\alpha(\|x-u\|^2+\kappa\|x-Tx\|^2)\\
&\leq(1-\alpha)\|x_0-u\|^2+\alpha(\|x-u\|+\sqrt{\kappa}\|x-Tx\|)^2\\
&\leq[\sqrt{1-\alpha}\|x_0-u\|+\sqrt{\alpha}(\|x-u\|+\sqrt{\kappa}\|x-Tx\|)]^2.
\endaligned
\end{equation}
It follows that,
\begin{equation}
\|y-u\|
\leq\sqrt{1-\alpha}\|x_0-u\|+\sqrt{\alpha}\|x-u\|+\sqrt{\alpha\kappa}\|x-Tx\|.
\end{equation}
Besides,
\begin{equation}
\aligned \|x-Tx\|&\leq\|x-y\|+\|y-Tx\|\\
&\leq\|x-u\|+\|y-u\|+(1-\alpha)\|x_0-Tx\|\\
\endaligned
\end{equation}
Substitute (7.2) into (7.3) can yield
\begin{equation}
(1-\sqrt{\alpha\kappa})\|x-Tx\|\leq
2\|x-u\|+\sqrt{1-\alpha}(\|x_0-Tx\|+\|x_0-u\|).
\end{equation}
From the assumption of the coefficients, we have
\begin{equation}
\|x-Tx\|\leq\frac{2}{1-\sqrt{\alpha\kappa}}\|x-u\|+\frac{\sqrt{1-\alpha}}{1-\sqrt{\alpha\kappa}}(\|x_0-Tx\|+\|x_0-u\|)
\end{equation}
which implies $u\in {^*C_x}$. So, $F(T)\subset C_x\subset {^*C_x}$.
\end{proof}

\begin{lemma}
Let $C$ be a nonempty closed convex subset of a real Hilbert space
$H$. Let $T$ be a nonexpansive mapping of \ $C$ into itself with
$F(T)\neq\O$. $\forall x_0,x\in C$ and $0\leq\alpha\leq 1$, let
$$
y=(1-\alpha)x_0+\alpha Tx,
$$
$$
C_x=\{z\in C: \|z-y\|^2\leq\|z-x\|^2+(1-\alpha)(\|x_0\|^2+2\langle
x-x_0,z\rangle)\}
$$
and
$$
^*C_x=\{z\in C:
\|x-Tx\|\leq2\|x-z\|+\sqrt{1-\alpha}[4\|x_0\|+\|x_0-Tx\|+\|x-x_0\|+\|x_0-z\|]\}
$$
Then, $C_x$ is a closed convex subset with $F(T)\subset C_x\subset
{^*C_x}$.
\end{lemma}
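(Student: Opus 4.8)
The plan is to follow the same three-part template used for Lemmas 7.1 and 7.2, specialized to a nonexpansive $T$ (equivalently, the $\kappa=0$ case of a strict pseudo-contraction). First I would dispatch closedness and convexity of $C_x$: rewriting its defining relation as $\|z-y\|^2\leq\|z-x\|^2+\langle 2(1-\alpha)(x-x_0),z\rangle+(1-\alpha)\|x_0\|^2$ puts it exactly in the form covered by Lemma 2.3, with linear vector $2(1-\alpha)(x-x_0)$ and additive constant $(1-\alpha)\|x_0\|^2$; hence $C_x$ is closed and convex with no further work.

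For the inclusion $F(T)\subset C_x$, I would take $p\in F(T)$ and expand $\|y-p\|^2=\|(1-\alpha)(x_0-p)+\alpha(Tx-p)\|^2$ by Lemma 2.1(ii), obtaining $(1-\alpha)\|x_0-p\|^2+\alpha\|Tx-p\|^2-\alpha(1-\alpha)\|x_0-Tx\|^2$. Nonexpansiveness gives $\|Tx-p\|\leq\|x-p\|$, and the nonpositive term $-\alpha(1-\alpha)\|x_0-Tx\|^2$ may be discarded, yielding $\|y-p\|^2\leq\|x-p\|^2+(1-\alpha)(\|x_0-p\|^2-\|x-p\|^2)$. Since $\|x_0-p\|^2-\|x-p\|^2=\|x_0\|^2-\|x\|^2+2\langle x-x_0,p\rangle$ and discarding the further nonpositive term $-(1-\alpha)\|x\|^2$ only enlarges the right-hand side, I land inside the defining inequality of $C_x$, so $p\in C_x$.

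The substantive part is $C_x\subset{^*C_x}$. For $u\in C_x$ I would first simplify the defining bound using the identity $\|x_0\|^2+2\langle x-x_0,u\rangle=\|x_0-u\|^2-\|x-u\|^2+\|x\|^2$, so that the inequality becomes $\|y-u\|^2\leq\alpha\|x-u\|^2+(1-\alpha)\|x_0-u\|^2+(1-\alpha)\|x\|^2$, a sum of three nonnegative squares. Applying $a^2+b^2+c^2\leq(a+b+c)^2$ and taking square roots gives the linear estimate $\|y-u\|\leq\|x-u\|+\sqrt{1-\alpha}\,\|x_0-u\|+\sqrt{1-\alpha}\,\|x\|$. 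Then I would run the triangle inequality $\|x-Tx\|\leq\|x-y\|+\|y-Tx\|$, noting that $y-Tx=(1-\alpha)(x_0-Tx)$ so $\|y-Tx\|=(1-\alpha)\|x_0-Tx\|\leq\sqrt{1-\alpha}\,\|x_0-Tx\|$, together with $\|x-y\|\leq\|x-u\|+\|y-u\|$. Collecting all terms, and using $\|x\|\leq\|x-x_0\|+\|x_0\|$ and $1-\alpha\leq\sqrt{1-\alpha}$ to convert coefficients, produces a bound of exactly the shape $2\|x-u\|+\sqrt{1-\alpha}\,[\,\|x_0\|+\|x_0-Tx\|+\|x-x_0\|+\|x_0-u\|\,]$, whence $u\in{^*C_x}$.

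I expect the main obstacle to be purely the bookkeeping in this last step: choosing the grouping for the perfect-square inequality and then tracking the several triangle-inequality expansions so that every stray norm is rewritten in terms of the four quantities $\|x_0\|$, $\|x_0-Tx\|$, $\|x-x_0\|$, $\|x_0-u\|$ appearing in ${^*C_x}$. The stated coefficient $4\|x_0\|$ is not tight, since a careful accounting gives coefficient $1$; but enlarging it only weakens the defining inequality of ${^*C_x}$, so the inclusion holds a fortiori and no sharpness is required.
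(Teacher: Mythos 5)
Your proposal is correct, and its first two parts (closedness/convexity via Lemma 2.3, and $F(T)\subset C_x$ via convexity of $\|\cdot\|^2$ plus nonexpansiveness, discarding the nonpositive terms $-\alpha(1-\alpha)\|x_0-Tx\|^2$ and $-(1-\alpha)\|x\|^2$) match the paper's proof essentially verbatim. The only genuine divergence is in how the cross term of the $C_x$ inequality is linearized for the inclusion $C_x\subset{^*C_x}$. The paper estimates $2\langle x-x_0,u\rangle\leq\|x-x_0+u\|^2$ and then bounds $\|x_0\|+\|x-x_0+u\|\leq 2\|x_0\|+\|x+u\|$, so the auxiliary quantity $\|x+u\|$ survives into its analogue of your linear estimate and must later be absorbed via $\|x+u\|\leq\|x-x_0\|+\|u-x_0\|+2\|x_0\|$; this is exactly what forces the coefficient $4\|x_0\|$ in the statement of ${^*C_x}$. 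You instead use the exact polarization identity $\|x_0\|^2+2\langle x-x_0,u\rangle=\|x_0-u\|^2-\|x-u\|^2+\|x\|^2$, which reassembles the bound into the convex combination $\alpha\|x-u\|^2+(1-\alpha)\|x_0-u\|^2+(1-\alpha)\|x\|^2$ before applying $a^2+b^2+c^2\leq(a+b+c)^2$; the remaining triangle inequalities are the same as the paper's. Your route is cleaner and sharper (coefficient $1$ on $\|x_0\|$ rather than $4$), and since all terms are nonnegative the stronger inequality implies membership in the stated ${^*C_x}$, so your observation that the lemma's constant is not tight but harmless is exactly right.
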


\begin{proof}
By Lemma 2.3, we see that $C_x$ is closed and convex. For any $p\in
F(T)$, we have
$$
\aligned\|y-p\|^2&=\|(1-\alpha)(x_0-p)+\alpha(Tx-p)\|^2\\
&\leq(1-\alpha)\|x_0-p\|^2+\alpha\|x-p\|^2\\
&=\|x-p\|^2+(1-\alpha)(\|x_0-p\|^2-\|x-p\|^2)\\
&=\|x-p\|^2+(1-\alpha)(\|x_0\|^2+2\langle x-x_0,p\rangle).
\endaligned
$$
Hence, $F(T)\subset C_x$. Let $u\in C_x$, then $\forall x\in C$
\begin{equation}
\aligned \|y-u\|^2&\leq\|x-u\|^2+(1-\alpha)(\|x_0\|^2+2\langle
x-x_0,u\rangle)\\
&\leq\|x-u\|^2+(1-\alpha)(\|x_0\|^2+\|x-x_0+u\|^2)\\
&\leq\|x-u\|^2+(1-\alpha)[\|x_0\|+\|x-x_0+u\|]^2\\
&\leq[\|x-u\|+\sqrt{1-\alpha}(2\|x_0\|+\|x+u\|)]^2.
\endaligned
\end{equation}
From (7.6), we obtain
\begin{equation}
\|y-u\|\leq\|x-u\|+\sqrt{1-\alpha}[2\|x_0\|+\|x+u\|].
\end{equation}
We also have,
\begin{equation}
\aligned \|x-Tx\|&\leq\|x-u\|+\|y-u\|+\|y-Tx\|\\
&=\|x-u\|+\|y-u\|+(1-\alpha)\|x_0-Tx\|.
\endaligned
\end{equation}
Combining (7.7) and (7.8), we get
\begin{equation}
\aligned \|x-Tx\|&\leq\|x-u\|+(1-\alpha)\|x_0-Tx\|+\|y-u\|\\
&\leq\|x-u\|+(1-\alpha)\|x_0-Tx\|+\|x-u\|\\
&\quad+\sqrt{1-\alpha}[2\|x_0\|+\|x+u\|]\\
&\leq2\|x-u\|+\sqrt{1-\alpha}[4\|x_0\|+\|x_0-Tx\|+\|x-x_0\|+\|u-x_0\|].
\endaligned
\end{equation}
From (7.9), we can conclude $u\in {^*C_x}$. So, $F(T)\subset
C_x\subset {^*C_x}$.
\end{proof}

\begin{theorem}
Let $C$ be a nonempty closed convex subset of a real Hilbert space
$H$ and $T$ a Lipschitz pseudo-contraction from $C$ into itself with
the Lipschitz constant $L\geq 1$ and $F(T)\neq\O$. Assume sequence
$\{\tau_n\}\subset [\tau,1]$ with $\tau\in (0,1]$, sequence
$\{\alpha_n\}\subset [a,b]$ with $a,b\in (0,\frac{1}{L+1})$ and
sequence $\{\beta_n\}$ satisfies that $\beta_n\in [0,1]$. Let
$\{x_n\}$ be a sequence generated by the following manner:
\begin{equation}
\left\{\begin{array}{l}x_0\in C \ chosen \ arbitrarily\\
y_n=(1-\beta_n)x_0+\beta_n Tx_n\\
v_n=(1-\alpha_n) x_n+\alpha_n Tx_n\\
C_n'=\{z\in C: \tau_n\alpha_n[1-(1+L)\alpha_n]\|x_n-T x_n\|^2\leq\langle x_n-z,v_n-Tv_n\rangle\}\\
C_n''=\{z\in C: \|y_n-z\|^2\leq\|x_n-z\|^2+2(1-\beta_n)\langle
x_n-x_0,z\rangle+\theta_n\}\\
C_n=C_n'\cap C_n''\\
Q_n=\{z\in C: \langle z-x_n,x_n-x_0\rangle\geq 0\}\\
x_{n+1}=P_{C_n^*\cap Q_n}x_0
\end{array}\right.
\end{equation}
where $C_n^*$ is a closed convex set with $F(T)\subset C_n^*\subset
C_n$ and
$\theta_n=(1-\beta_n)(\|x_0\|^2-\|x_n\|^2)+\beta_n\|x_n-Tx_n\|^2-\beta_n(1-\beta_n)\|x_0-Tx_n\|^2$.
Then $\{x_n\}$ converges strongly to $P_{F(T)}x_0$.
\end{theorem}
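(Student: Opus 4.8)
The plan is to exploit the fact that the algorithm decomposes into a Mann part and a Halpern part, with the convergence being driven entirely by the Mann part. Concretely, $C_n'$ is exactly the set treated in Lemma~5.1 and Theorem~5.1, with $v_n$ playing the role of the auxiliary point $y$ and with $x=x_n$, $\alpha=\alpha_n$, $\tau=\tau_n$; while $C_n''$ is exactly the Halpern-type set of Lemma~7.1, with $x=x_n$, $\alpha=\beta_n$ and $y=y_n$. The intersection $C_n=C_n'\cap C_n''$ only shrinks the Mann set, so the Halpern ingredient contributes nothing beyond keeping $F(T)$ inside $C_n$.

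First I would invoke Lemma~7.1 (with $x=x_n$, $\alpha=\beta_n\in[0,1]$ and $\theta=\theta_n$) to conclude that $C_n''$ is closed and convex with $F(T)\subset C_n''$; note that this lemma supplies containment of $F(T)$ only, and no control on $\|x_n-Tx_n\|$. Next I would apply Lemma~5.1 (with $x=x_n$, $\alpha=\alpha_n$, $\tau=\tau_n$) to obtain that $C_n'$ is closed and convex with
$$F(T)\subset C_n'\subset {}^{*}C_n'=\Bigl\{z\in C:\|x_n-Tx_n\|\leq\frac{(L+1)\alpha_n+1}{\tau_n\alpha_n[1-(L+1)\alpha_n]}\|x_n-z\|\Bigr\}.$$
Because $\{\alpha_n\}\subset[a,b]$ with $a,b\in(0,\frac{1}{L+1})$ and $\{\tau_n\}\subset[\tau,1]$, the Lipschitz-type coefficient is bounded above by $\frac{(L+1)b+1}{\tau a[1-(L+1)b]}<\infty$, exactly as in the proof of Theorem~5.1.

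Combining the two containments gives $F(T)\subset C_n=C_n'\cap C_n''$, so that an intermediate closed convex set $C_n^*$ with $F(T)\subset C_n^*\subset C_n$ indeed exists, and in particular $F(T)\subset C_n^*\subset C_n\subset C_n'$. Since $C_n'$ is precisely the set called $C_n$ in Theorem~5.1, the set $C_n^*$ is a legitimate choice for the intermediate set there, and the projection step $x_{n+1}=P_{C_n^*\cap Q_n}x_0$ together with $Q_n$ is identical. As $T$ is a Lipschitz pseudo-contraction with $L\geq 1$ and $F(T)\neq\emptyset$, Lemma~2.5 guarantees that $F(T)$ is closed convex and $I-T$ is demiclosed at zero; therefore Theorem~5.1 applies verbatim and yields $x_n\to P_{F(T)}x_0$. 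This mirrors the reduction already used in the proof of Theorem~6.2.

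The step I expect to require the most care is verifying that inserting the Halpern set $C_n''$ does not disturb the machinery of Theorem~5.1. The essential estimate $\|x_n-Tx_n\|\to 0$ is forced by the bound carried by ${}^{*}C_n'$ (through $\|x_{n+1}-x_n\|\to 0$), and intersecting with $C_n''$ only makes $C_n$ smaller, which preserves the chain $F(T)\subset C_n^*\subset C_n'$. Hence no genuinely new difficulty arises; the whole content beyond Theorem~5.1 is the observation that both $C_n'$ and $C_n''$ contain $F(T)$, which is exactly what Lemmas~5.1 and~7.1 provide.
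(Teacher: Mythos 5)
Your proposal is correct and takes essentially the same route as the paper: the paper's own proof of this theorem consists of observing $F(T)\subset C_n^*\subset C_n\subset C_n'$ and then invoking Theorem 5.1. Your extra verification via Lemmas 5.1 and 7.1 that $F(T)\subset C_n'\cap C_n''$ (so that the hypothesis on $C_n^*$ is non-vacuous) is a harmless elaboration of the same reduction.
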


\begin{proof}
It is obvious that $F(T)\subset C_n^*\subset C_n\subset C_n'$.
Hence, by Theorem 5.1, $x_n\rightarrow P_{F(T)}x_0$.
\end{proof}

\begin{theorem}
Let $C$ be a nonempty closed convex subset of a real Hilbert space
$H$. Let $T:C\rightarrow C$ be a $\kappa-$strict-pseudo-contraction
for some $0\leq\kappa<1$ such that $F(T)\neq\O$. Suppose that
$\{\alpha_n\}$ is a real sequence in $[0,1]$ satisfies that
$\lim_{n\rightarrow\infty}\alpha_n=1$. Let a sequence $\{x_n\}$ be
generated by
$$
\left\{\begin{array}{l}x_0\in C \ chosen \ arbitrarily\\
y_n=(1-\alpha_n)x_0+\alpha_n Tx_n\\
C_n=\{z\in C: \|y_n-z\|^2\leq\|x_n-z\|^2+2(1-\alpha_n)\langle
x_n-x_0,z\rangle+\theta_n\}\\
Q_n=\{z\in C:\langle z-x_n,x_n-x_0\rangle\geq 0\}\\
x_{n+1}=P_{C_n^*\cap Q_n}\\
\end{array}\right.
$$
where $C_n^*$ is a closed convex set with $F(T)\subset C_n^*\subset
C_n$ and
$\theta_n=(1-\alpha_n)(\|x_0\|^2-\|x_n\|^2)+\alpha_n\kappa\|x_n-Tx_n\|^2-\alpha_n(1-\alpha_n)\|x_0-Tx_n\|^2$.
Then, $\{x_n\}$ converges strongly to $P_{F(T)}x_0$.
\end{theorem}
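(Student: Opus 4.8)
The plan is to realize Theorem 7.2 as a direct instance of the master result Theorem 3.1, exactly as was done for Theorems 5.3, 6.2 and 7.1: the role of the generic Lipschitz set $C_n$ appearing in (3.1) will be played by the auxiliary set ${}^*C_n$ supplied by Lemma 7.2. First I would record the structural hypotheses. Since $T$ is a $\kappa$-strict pseudo-contraction with $0\leq\kappa<1$, it is Lipschitz with constant $L=\frac{1+\kappa}{1-\kappa}\geq 1$, and by Lemma 2.6 the fixed point set $F(T)$ is closed and convex while $I-T$ is demiclosed at zero. In particular $P_{F(T)}x_0$ is well defined and all the standing assumptions on $T$ in Theorem 3.1 are met.

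Next I would invoke Lemma 7.2 with $x=x_n$ and $\alpha=\alpha_n$. The element $y_n=(1-\alpha_n)x_0+\alpha_n Tx_n$ and the set $C_n$ of the algorithm coincide with the objects $y$ and $C_x$ of the lemma (with the matching choice of $\theta_n$), so the lemma gives that each $C_n$ is closed and convex and that $F(T)\subset C_n\subset {}^*C_n$, where
\[
{}^*C_n=\Bigl\{z\in C:\ \|x_n-Tx_n\|\leq\frac{2}{1-\sqrt{\alpha_n\kappa}}\|x_n-z\|+\frac{\sqrt{1-\alpha_n}}{1-\sqrt{\alpha_n\kappa}}\bigl(\|x_0-Tx_n\|+\|x_0-z\|\bigr)\Bigr\}.
\]
Combining this with the hypothesis $F(T)\subset C_n^*\subset C_n$ yields $F(T)\subset C_n^*\subset {}^*C_n$. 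I would then read off ${}^*C_n$ as precisely the set $C_n$ of Theorem 3.1, under the identification $\mu_n=\frac{2}{1-\sqrt{\alpha_n\kappa}}$ and $\theta_n(z)=\frac{\sqrt{1-\alpha_n}}{1-\sqrt{\alpha_n\kappa}}(\|x_0-Tx_n\|+\|x_0-z\|)\geq 0$. Because $\alpha_n\leq 1$ we have $\mu_n\leq\frac{2}{1-\sqrt{\kappa}}<\infty$; the lower bound $\mu_n\geq L+1=\frac{2}{1-\kappa}$ amounts to $\alpha_n\geq\kappa$, which holds for all large $n$ since $\alpha_n\rightarrow 1>\kappa$, and if one wishes it for every $n$ one may harmlessly replace the coefficient by $\max\{\mu_n,L+1\}$, which only enlarges ${}^*C_n$ and preserves the inclusion $F(T)\subset C_n^*\subset {}^*C_n$.

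The one genuine point to check, and the part I expect to be the main obstacle, is the hypothesis $\lim_{n\rightarrow\infty}\theta_n(x_{n+1})=0$ of Theorem 3.1. Here I would split $\theta_n(x_{n+1})$ into the scalar factor $\frac{\sqrt{1-\alpha_n}}{1-\sqrt{\alpha_n\kappa}}$ and the factor $\|x_0-Tx_n\|+\|x_0-x_{n+1}\|$. Since $\alpha_n\rightarrow 1$, the numerator $\sqrt{1-\alpha_n}\rightarrow 0$ while the denominator tends to $1-\sqrt{\kappa}>0$, so the scalar factor tends to $0$. For the remaining factor one needs boundedness: the sequence $\{x_n\}$ is bounded because, as established in the first part of the proof of Theorem 3.1, $x_n=P_{Q_n}x_0$ and $F(T)\subset Q_n$ force $\|x_0-x_n\|\leq\|x_0-p\|$ for every $p\in F(T)$. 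This estimate rests only on the CQ projection structure and not on the $\theta$-condition, so there is no circularity. Boundedness of $\{x_n\}$ together with the Lipschitz continuity of $T$ then bounds $\{Tx_n\}$, whence $\|x_0-Tx_n\|+\|x_0-x_{n+1}\|$ is bounded and the product $\theta_n(x_{n+1})\rightarrow 0$.

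Having verified every hypothesis, I would conclude by applying Theorem 3.1, with Lemma 2.6 furnishing the demiclosedness of $I-T$, to obtain $x_n\rightarrow P_{F(T)}x_0$, which is the assertion.
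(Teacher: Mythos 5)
Your proposal is correct and follows essentially the same route as the paper's own proof: invoke Lemma 7.2 to get $F(T)\subset C_n^*\subset C_n\subset{}^*C_n$, identify ${}^*C_n$ with the set of Theorem 3.1 via $\mu_n=\frac{2}{1-\sqrt{\alpha_n\kappa}}$ and $\theta_n(z)=\frac{\sqrt{1-\alpha_n}}{1-\sqrt{\alpha_n\kappa}}(\|x_0-Tx_n\|+\|x_0-z\|)$, check $\theta_n(x_{n+1})\rightarrow 0$ via boundedness, and conclude with Lemma 2.6 and Theorem 3.1. You are in fact more careful than the paper on two points it glosses over --- the lower bound $\mu_n\geq L+1$ (handled by $\max\{\mu_n,L+1\}$) and the non-circularity of using boundedness of $\{x_n\}$ to verify the $\theta$-hypothesis --- both handled correctly.
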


\begin{proof}
From the assumption, we have
$\frac{2}{1-\sqrt{\alpha_n\kappa}}\leq\frac{2}{1-\sqrt{\kappa}}<\infty$.
Easily, we can prove $\|x_0-Tx_n\|+\|x_0-x_{n+1}\|$ is bounded.
Then,
$\lim_{n\rightarrow\infty}\frac{\sqrt{1-\alpha_n}}{1-\sqrt{\alpha_n\kappa}}(\|x_0-Tx_n\|+\|x_0-x_{n+1}\|)=0$.
Let $^*C_n=\{z\in C:
\|x_n-Tx_n\|\leq\frac{2}{1-\sqrt{\alpha_n\kappa}}\|x_n-z\|+\frac{\sqrt{1-\alpha_n}}{1-\sqrt{\alpha_n\kappa}}(\|x_0-Tx_n\|+\|x_0-z\|)\}$,
then using Lemma 7.2, $F(T)\subset C_n^*\subset C_n\subset {^*C_n}$.
Hence, by Lemma 2.6 and Theorem 3.1, $x_n\rightarrow P_{F(T)}x_0$.
\end{proof}

\begin{theorem}
Let $C$ be a nonempty closed convex subset of a Hilbert space $H$
and $T$ a $\kappa-$strict pseudo-contraction of $C$ into itself for
some $0\leq\kappa<1$ with $F(T)\neq\O$. Suppose $x_0\in C$ chosen
arbitrarily and $\{x_n\}$ is given by
\begin{equation}
\left\{\begin{array}{l}v_n=(1-\alpha_n) x_n+\alpha_n Tx_n\\
y_n=(1-\beta_n)x_0+\beta_n Tx_n\\
C_n'=\{z\in C: \|v_n-z\|^2\leq\|x_n-z\|^2+\alpha_n(\kappa-(1-\alpha_n))\|x_n-Tx_n\|^2\}\\
C_n''=\{z\in C: \|y_n-z\|^2\leq\|x_n-z\|^2+2(1-\beta_n)\langle
x_n-x_0,z\rangle+\theta_n\}\\
C_n=C_n'\cap C_n''\\
Q_n=\{z\in C: \langle z-x_n,x_n-x_0\rangle\geq 0\}\\
x_{n+1}=P_{C_n^*\cap Q_n}x_0
\end{array}\right.
\end{equation}
where $C_n^*$ is a closed convex set with $F(T)\subset C_n^*\subset
C_n$, $\{\alpha_n\}$ is chosen such that $0<\alpha\leq\alpha_n\leq
1$, $\{\beta_n\}$ is a sequence in $[0,1]$ and
$\theta_n=(1-\beta_n)(\|x_0\|^2-\|x_n\|^2)+\beta_n\kappa\|x_n-Tx_n\|^2-\beta_n(1-\beta_n)\|x_0-Tx_n\|^2$.
Then, $\{x_n\}$ converges strongly to $P_{F(T)}x_0$.
\end{theorem}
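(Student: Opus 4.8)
The plan is to reduce this theorem to the already-established Theorem 5.3, following the same containment strategy used for the combined algorithms of Theorems 6.3 and 7.1. The structural observation driving the proof is that the feasible set here is the intersection $C_n=C_n'\cap C_n''$, in which $C_n'$ is exactly the Mann-type set for a $\kappa$-strict pseudo-contraction appearing in Theorem 5.3 (with the present $v_n$ playing the role of $y_n$ there), while $C_n''$ is the Halpern-type set analyzed in Lemma 7.2.

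First I would confirm that $F(T)\subset C_n$. The computation in Lemma 5.3, namely inequality (5.10), shows that every $p\in F(T)$ satisfies the defining inequality of $C_n'$, and Lemma 7.2 shows that every $p\in F(T)$ satisfies the defining inequality of $C_n''$; hence $F(T)\subset C_n'\cap C_n''=C_n$. Together with the standing hypothesis $F(T)\subset C_n^*\subset C_n$, this yields the chain $F(T)\subset C_n^*\subset C_n\subset C_n'$, the final inclusion being automatic because $C_n$ is an intersection having $C_n'$ as a factor.

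Next I would observe that this chain is precisely the hypothesis needed to invoke Theorem 5.3: the parameter condition $0<\alpha\leq\alpha_n\leq 1$ is assumed, $T$ is a $\kappa$-strict pseudo-contraction, and $C_n^*$ is a closed convex set wedged between $F(T)$ and the Mann set $C_n'$. Applying Theorem 5.3 verbatim then gives $x_n\rightarrow P_{F(T)}x_0$, which completes the proof.

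The step I expect to require the most care is verifying that the extra Halpern constraint $C_n''$ cannot obstruct convergence. The point is that the convergence mechanism of Theorem 5.3, which itself rests on Theorem 3.1, uses only $F(T)\subset C_n^*$ and $C_n^*\subset C_n'$, since it is $C_n'$ that forces $\|x_n-Tx_n\|\rightarrow 0$ via the norm estimate furnished by Lemma 5.3; intersecting with $C_n''$ merely shrinks the feasible region while preserving $F(T)\subset C_n$, so it affects neither the boundedness of $\{x_n\}$, nor the monotonicity of $\|x_n-x_0\|$, nor the demiclosedness step. I would also note that one cannot reduce instead through $C_n''$ by way of Theorem 7.2, since the Halpern parameter $\beta_n$ is only assumed to lie in $[0,1]$ and need not tend to $1$; the reduction must therefore proceed through $C_n'$.
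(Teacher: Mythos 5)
Your proposal is correct and follows essentially the same route as the paper: the paper's proof is exactly the observation that $F(T)\subset C_n^*\subset C_n\subset C_n'$ (since $C_n=C_n'\cap C_n''$) and that $C_n'$ is the Mann-type set of Theorem~5.3, so Theorem~5.3 applies verbatim. Your additional checks — that $F(T)\subset C_n''$ via Lemma~7.2 and that the extra Halpern constraint only shrinks the feasible set without disturbing the convergence mechanism — are sound elaborations of what the paper leaves implicit.
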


\begin{proof}
It is obvious that $F(T)\subset C_n^*\subset C_n\subset C_n'$.
Hence, by Theorem 5.3, $x_n\rightarrow P_{F(T)}x_0$.
\end{proof}

The following theorem is a deduced result of Theorem 7.3.

\begin{theorem}
Let $C$ be a nonempty closed convex subset of a real Hilbert space
$H$. Let $T:C\rightarrow C$ be a nonexpansive mapping with
$F(T)\neq\O$. Suppose that $\{\alpha_n\}$ is a real sequence in
$[0,1]$ satisfies that $\lim_{n\rightarrow\infty}\alpha_n=1$. Let a
sequence $\{x_n\}$ be generated by
$$
\left\{\begin{array}{l}x_0\in C \ chosen \ arbitrarily\\
y_n=(1-\alpha_n)x_0+\alpha_n Tx_n\\
C_n=\{z\in C: \|y_n-z\|^2\leq\|x_n-z\|^2+2(1-\alpha_n)\langle
x_n-x_0,z\rangle+\theta_n\}\\
Q_n=\{z\in C:\langle z-x_n,x_n-x_0\rangle\geq 0\}\\
x_{n+1}=P_{C_n^*\cap Q_n}\\
\end{array}\right.
$$
where $C_n^*$ is a closed convex set with $F(T)\subset C_n^*\subset
C_n$ and
$\theta_n=(1-\alpha_n)(\|x_0\|^2-\|x_n\|^2)-\alpha_n(1-\alpha_n)\|x_0-Tx_n\|^2$.
Then, $\{x_n\}$ converges strongly to $P_{F(T)}x_0$.
\end{theorem}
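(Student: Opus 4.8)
The plan is to read Theorem 7.4 as the special case $\kappa = 0$ of the $\kappa$-strict pseudo-contraction Halpern theorem established above (Theorem 7.2). A nonexpansive mapping satisfies $\|Tx - Ty\|^2 \leq \|x - y\|^2$, which is exactly the strict pseudo-contraction inequality (1.2) with $\kappa = 0$, so every nonexpansive $T$ is a $0$-strict pseudo-contraction. First I would substitute $\kappa = 0$ into the statement of Theorem 7.2 and check that the data agree verbatim: the set $C_n$, the half-space $Q_n$, and the recursion $x_{n+1} = P_{C_n^* \cap Q_n} x_0$ are unchanged, while the summand $\alpha_n \kappa \|x_n - Tx_n\|^2$ disappears from $\theta_n$, leaving precisely $\theta_n = (1-\alpha_n)(\|x_0\|^2 - \|x_n\|^2) - \alpha_n(1-\alpha_n)\|x_0 - Tx_n\|^2$, the $\theta_n$ of Theorem 7.4. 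The hypotheses on $\{\alpha_n\}$ (a sequence in $[0,1]$ with $\alpha_n \to 1$) and on $C_n^*$ (closed, convex, with $F(T) \subset C_n^* \subset C_n$) likewise coincide, so Theorem 7.2 applies directly and gives $x_n \to P_{F(T)} x_0$.

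If one prefers to expose the mechanism rather than merely quote Theorem 7.2, the argument runs through Lemma 7.2 and the master Theorem 3.1 exactly as in the proof of Theorem 7.2. Putting $\kappa = 0$ in Lemma 7.2 produces the Lipschitz-type outer set ${}^*C_n = \{z \in C : \|x_n - Tx_n\| \leq 2\|x_n - z\| + \sqrt{1-\alpha_n}\,(\|x_0 - Tx_n\| + \|x_0 - z\|)\}$ together with the inclusion chain $F(T) \subset C_n^* \subset C_n \subset {}^*C_n$. This casts the iteration in the form demanded by Theorem 3.1 with coefficient $\mu_n \equiv 2$ and remainder function $\theta_n(z) = \sqrt{1-\alpha_n}\,(\|x_0 - Tx_n\| + \|x_0 - z\|)$. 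For nonexpansive $T$ the set $F(T)$ is closed and convex and $I - T$ is demiclosed at zero, either by Lemma 2.6 with $\kappa = 0$ or by the classical demiclosedness principle, so the standing hypotheses of Theorem 3.1 are met.

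The single point that genuinely requires checking is the condition $\lim_{n \to \infty} \theta_n(x_{n+1}) = 0$ of Theorem 3.1. The framework of Theorem 3.1 already yields boundedness of $\{x_n\}$ through inequality (3.2), and nonexpansiveness of $T$ together with the existence of a fixed point $p$ gives $\|Tx_n - p\| \leq \|x_n - p\|$, hence boundedness of $\{Tx_n\}$; consequently $\|x_0 - Tx_n\| + \|x_0 - x_{n+1}\|$ stays bounded. Since $\alpha_n \to 1$ forces $\sqrt{1-\alpha_n} \to 0$, the product $\theta_n(x_{n+1}) = \sqrt{1-\alpha_n}\,(\|x_0 - Tx_n\| + \|x_0 - x_{n+1}\|)$ tends to $0$, while $\mu_n \equiv 2$ is trivially bounded. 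With these verifications Theorem 3.1 delivers $x_n \to P_{F(T)} x_0$. I expect no substantial obstacle here: the statement is a routine corollary, and all the work reduces to confirming that the $\kappa = 0$ substitution reproduces the stated $\theta_n$ and that the vanishing-remainder requirement survives the passage $\alpha_n \to 1$.
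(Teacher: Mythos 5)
Your proposal is correct and is exactly the paper's intended argument: the paper gives no proof of this theorem beyond the remark that it is ``a deduced result of Theorem 7.3'' (evidently a typo for Theorem 7.2, since the displayed $C_n$ and $\theta_n$ are precisely those of Theorem 7.2 with $\kappa=0$), and your specialization --- checking that $\theta_n$ reduces to the stated form, that Lemma 7.2 yields $\mu_n\equiv 2$ and the remainder $\sqrt{1-\alpha_n}\,(\|x_0-Tx_n\|+\|x_0-z\|)$, and that boundedness plus $\alpha_n\to 1$ gives $\theta_n(x_{n+1})\to 0$ --- supplies exactly the details the paper omits. Nothing further is needed.
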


\begin{theorem}
Let $C$ be a nonempty closed convex subset of a Hilbert space $H$.
Let $T$ be a nonexpansive mapping of \ $C$ into itself such that
$F(T)\neq \O$. Assume that $\{\alpha_n\}$ is a sequences in $(0,1)$
such that $\lim_{n\rightarrow\infty}\alpha_n=1$. Define a sequence
$\{x_n\}$ in $C$ by algorithm:
$$
\left\{\begin{array}{l}x_0\in C \ chosen \ arbitrarily\\
y_n=(1-\alpha_n)x_0+\alpha_n Tx_n\\
C_n=\{z\in C:
\|z-y_n\|^2\leq\|z-x_n\|^2+(1-\alpha_n)(\|x_0\|^2+2\langle
x_n-x_0,z\rangle)\}\\
Q_n=\{z\in C: \langle z-x_n,x_n-x_0\rangle\geq 0\}\\
x_{n+1}=P_{C_n^*\cap Q_n}x_0
\end{array}\right.
$$
where $C_n^*$ is a closed convex set with $F(T)\subset C_n^*\subset
C_n$. Then, $\{x_n\}$ converges strongly to $P_{F(T)}x_0$.
\end{theorem}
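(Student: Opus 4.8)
The plan is to derive this final theorem (Theorem 7.5) as a direct consequence of the general framework established in Theorem 3.1, exactly in the style of the preceding Halpern-type results. The key observation is that this theorem is the nonexpansive specialization of the $\kappa$-strict pseudo-contraction case treated in Lemma 7.3, so I expect the heavy lifting to have already been done there. Let me trace how the pieces fit together.

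First I would verify that the set $C_n$ appearing in the algorithm is precisely the set $C_x$ from Lemma 7.3 with $x=x_n$, $x_0$ fixed, and $\alpha=\alpha_n$. Indeed, with $y_n=(1-\alpha_n)x_0+\alpha_n Tx_n$, the defining inequality $\|z-y_n\|^2\leq\|z-x_n\|^2+(1-\alpha_n)(\|x_0\|^2+2\langle x_n-x_0,z\rangle)$ matches the definition of $C_x$ in Lemma 7.3 verbatim. By that lemma, $C_n$ is closed and convex, and $F(T)\subset C_n\subset {^*C_n}$, where
$$
{^*C_n}=\{z\in C:\|x_n-Tx_n\|\leq 2\|x_n-z\|+\sqrt{1-\alpha_n}\,[4\|x_0\|+\|x_0-Tx_n\|+\|x_n-x_0\|+\|x_0-z\|]\}.
$$
Since $F(T)\subset C_n^*\subset C_n\subset {^*C_n}$, the chain of inclusions required to invoke Theorem 3.1 is in place, provided I can cast ${^*C_n}$ into the required form $\{z\in C:\|x_n-Tx_n\|\leq\mu_n\|x_n-z\|+\theta_n(z)\}$ with $L+1\leq\mu_n\leq\mu<\infty$ and $\lim_{n\to\infty}\theta_n(x_{n+1})=0$.

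The main obstacle, then, is the verification of the vanishing-remainder condition. Here I would set $\mu_n=2$ (noting $T$ is nonexpansive, so $L=1$ and $L+1=2\leq\mu_n$), and define the remainder function by $\theta_n(z)=\sqrt{1-\alpha_n}\,[4\|x_0\|+\|x_0-Tx_n\|+\|x_n-x_0\|+\|x_0-z\|]$. The crux is to show $\theta_n(x_{n+1})\to 0$. Since $\alpha_n\to 1$, we have $\sqrt{1-\alpha_n}\to 0$, so it suffices to prove that the bracketed quantity stays bounded as $n\to\infty$. As in the proofs of Theorems 7.2 and 7.3, I would argue that $\{x_n\}$ is bounded (this is guaranteed inside the proof of Theorem 3.1 via the estimate $\|x_0-x_n\|\leq\|x_0-p\|$), hence $\{x_{n+1}\}$ is bounded, and by the $L$-Lipschitz (indeed nonexpansive) property $\{Tx_n\}$ is bounded as well; thus $\|x_0-Tx_n\|$, $\|x_n-x_0\|$, and $\|x_0-x_{n+1}\|$ are all bounded. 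Consequently $\theta_n(x_{n+1})\to 0$, completing the hypotheses of Theorem 3.1.

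With these verifications assembled, the conclusion is immediate: all hypotheses of Theorem 3.1 are satisfied, so $\{x_n\}$ converges strongly to $P_{F(T)}x_0$. In fact, since a nonexpansive mapping is a $0$-strict pseudo-contraction, this theorem is simply the $\kappa=0$ case of Theorem 7.3, and the remark that it is ``a deduced result of Theorem 7.3'' (preceding Theorem 7.4) suggests the cleanest writeup would cite Lemma 7.3 for the structural inclusions and then invoke Lemma 2.6 together with Theorem 3.1, mirroring the one-line proofs given for Theorems 7.2 and 7.3.
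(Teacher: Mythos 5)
Your proposal is correct and follows essentially the same route as the paper: the paper's proof likewise invokes Lemma 7.3 to obtain $F(T)\subset C_n^*\subset C_n\subset {^*C_n}$, observes that $\sqrt{1-\alpha_n}\,[4\|x_0\|+\|x_0-Tx_n\|+\|x_n-x_0\|+\|x_0-x_{n+1}\|]\rightarrow 0$ by boundedness, and concludes via Lemma 2.6 and Theorem 3.1. (Only your side remarks misattribute Lemma 7.3 to the $\kappa$-strict case; the lemma is stated for nonexpansive mappings, but this does not affect your argument.)
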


\begin{proof}
Obviously, $4\|x_0\|+\|x_0-Tx_n\|+\|x_n-x_0\|+\|x_0-x_{n+1}\|$ is
bounded. Then,
$\lim_{n\rightarrow\infty}\sqrt{1-\alpha_n}[4\|x_0\|+\|x_0-Tx_n\|+\|x_n-x_0\|+\|x_0-x_{n+1}\|]=0$.
Let $^*C_n=\{z\in C: \|x_n-Tx_n\|\leq2\|x_n-z\|
+\sqrt{1-\alpha_n}[4\|x_0\|+\|x_0-Tx_n\|+\|x_n-x_0\|+\|x_0-z\|]\}$,
then using Lemma 7.3, $F(T)\subset C_n^*\subset C_n\subset {^*C_n}$.
Hence, by Lemma 2.6 and Theorem 3.1, $x_n\rightarrow P_{F(T)}x_0$.
\end{proof}

\begin{remark}
In this theorem, let $C_n^*=C_n$, then we obtain an algorithm which
is also proposed in \cite{9}. And Theorem 7.4 is also the deduced
result of Theorem 7.5.
\end{remark}

\begin{remark}
In last three sections, $C_n$ itself is closed and convex. So,
setting $C_n^*=C_n$, we can yield normal CQ algorithms.
\end{remark}

\section{Relations of different algorithms}

In \cite{12}, Takahashi, Takeuchi and Kubota obtained another strong
convergence theorem for nonexpansive mappings, named monotone C
method.

\begin{theorem}
Let $C$ be a nonempty closed convex subset of a Hilbert space $H$
and $T$ a nonexpansive mapping of \ $C$ into itself such that
$F(T)\neq \O$. Suppose $x_0\in C_0=C$ chosen arbitrarily and
$\{x_n\}$ is given by
\begin{equation}
\left\{\begin{array}{l}y_n=(1-\alpha_n) x_n+\alpha_n Tx_n\\
C_{n+1}=\{z\in C_n: \|z-y_n\|\leq\|z-x_n\|\}\\
x_{n+1}=P_{C_{n+1}}x_0
\end{array}\right.
\end{equation}
where $\{\alpha_n\}$ is chosen such that $0<\alpha\leq\alpha_n\leq
1$. Then, $\{x_n\}$ converges strongly to $P_{F(T)}x_0$.
\end{theorem}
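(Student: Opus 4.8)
The plan is to mirror the argument of Theorem 3.1, with the nested structure $C_{n+1}\subset C_n$ playing exactly the role that $Q_n$ played there. First I would establish that the iteration is well defined by showing, by induction on $n$, that each $C_n$ is a nonempty closed convex set containing $F(T)$. Since $C_0=C$ this is clear for $n=0$, and the set $\{z:\|z-y_n\|\le\|z-x_n\|\}$ is a closed half-space, so $C_{n+1}$ is closed and convex as the intersection of $C_n$ with that half-space. To see $F(T)\subset C_{n+1}$, take $p\in F(T)\subset C_n$ and apply Lemma 2.1(ii) to $y_n=(1-\alpha_n)x_n+\alpha_n Tx_n$:
\[
\|y_n-p\|^2=(1-\alpha_n)\|x_n-p\|^2+\alpha_n\|Tx_n-p\|^2-\alpha_n(1-\alpha_n)\|x_n-Tx_n\|^2.
\]
Nonexpansiveness gives $\|Tx_n-p\|=\|Tx_n-Tp\|\le\|x_n-p\|$, whence $\|y_n-p\|^2\le\|x_n-p\|^2-\alpha_n(1-\alpha_n)\|x_n-Tx_n\|^2\le\|x_n-p\|^2$, so $p\in C_{n+1}$. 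Thus $P_{C_{n+1}}x_0$ exists for every $n$ and $\{x_n\}$ is well defined.

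Next I would extract monotonicity and boundedness purely from the nesting. Because $x_{n+1}\in C_{n+1}\subset C_n$ while $x_n=P_{C_n}x_0$, the nearest-point property gives $\|x_0-x_n\|\le\|x_0-x_{n+1}\|$, so $\|x_0-x_n\|$ is nondecreasing; and since $q:=P_{F(T)}x_0\in F(T)\subset C_n$ we get $\|x_0-x_n\|\le\|x_0-q\|$ for all $n$. Hence $\{x_n\}$ is bounded and $\lim_n\|x_0-x_n\|$ exists. This is the crux of the monotone method: nestedness together with $x_n=P_{C_n}x_0$ supplies exactly the monotonicity that the half-space $Q_n$ provided in Theorem 3.1, so no $Q_n$ is needed here.

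Then I would prove asymptotic regularity. By Lemma 2.2 applied to $x_n=P_{C_n}x_0$ and the point $x_{n+1}\in C_n$, one has $\langle x_0-x_n,\,x_{n+1}-x_n\rangle\le0$; combined with Lemma 2.1(i) (as in Theorem 3.1) this yields $\|x_{n+1}-x_n\|^2\le\|x_{n+1}-x_0\|^2-\|x_n-x_0\|^2\to0$. Since $x_{n+1}\in C_{n+1}$ we have $\|x_{n+1}-y_n\|\le\|x_{n+1}-x_n\|$, so by the triangle inequality $\|x_n-y_n\|\le 2\|x_{n+1}-x_n\|\to0$; and because $\|x_n-y_n\|=\alpha_n\|x_n-Tx_n\|$ with $\alpha_n\ge\alpha>0$, it follows that $\|x_n-Tx_n\|\to0$.

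Finally I would conclude by demiclosedness. A nonexpansive map is a $0$-strict pseudo-contraction, so Lemma 2.6 gives that $I-T$ is demiclosed at zero; together with $\|x_n-Tx_n\|\to0$ this forces $\omega_w(x_n)\subset F(T)$. Applying Lemma 2.4 with $u=x_0$ and $q=P_{F(T)}x_0$ (using $\omega_w(x_n)\subset F(T)$ and the bound $\|x_n-x_0\|\le\|x_0-q\|$ from the second step) yields $x_n\to P_{F(T)}x_0$, as claimed. I expect no serious obstacle in the estimates themselves; the only genuinely new point relative to Theorem 3.1 is recognizing that the monotone nesting of the $C_n$ replaces the auxiliary set $Q_n$ in producing both the monotonicity of $\|x_0-x_n\|$ and the projection inequality $\langle x_0-x_n,x_{n+1}-x_n\rangle\le0$.
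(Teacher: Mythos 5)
Your proof is correct, and it is a genuinely different route from the one the paper takes. The paper never proves this theorem (its Theorem 8.1, the monotone C method) directly: it states it as a known result and then, in Section 8, derives the generalized version (Theorem 8.5) through the chain of reductions in Proposition 8.7, namely CQ method $\Rightarrow$ monotone Q method $\Rightarrow$ monotone C method, where the CQ method itself rests on the framework Theorem 3.1 via Theorem 4.3. The pivotal observation in that reduction is the same one you isolate --- since $x_n=P_{C_n}x_0$ and $C_{n+1}\subset C_n$, Lemma 2.2 forces $C_n\subset Q_{n+1}=\{z:\langle z-x_{n+1},x_{n+1}-x_0\rangle\geq 0\}$, so projecting onto the nested $C_{n+1}$ is the same as projecting onto $C_{n+1}\cap\bigl(\bigcap_{i\le n+1}Q_i\bigr)$ and the CQ theorem applies --- but the paper uses it to transport the conclusion from an already-proved theorem, whereas you use it to rebuild the whole argument from scratch (well-definedness, monotonicity and boundedness of $\|x_0-x_n\|$, asymptotic regularity via $\|x_{n+1}-x_n\|^2\le\|x_{n+1}-x_0\|^2-\|x_n-x_0\|^2$, then demiclosedness and Lemma 2.4). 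Your direct proof is more self-contained and makes transparent exactly which structural features (nesting plus the nearest-point property) do the work of $Q_n$; the paper's reduction buys generality, since Proposition 8.7 delivers the result for arbitrary intermediate sets $C_n(thm5)$ with $F(T)\subset C_n(thm5)\subset C_{n+1}$ and simultaneously establishes the logical relations among the four methods. All of your individual estimates check out, including the step $\|x_n-y_n\|\le 2\|x_{n+1}-x_n\|$ from $x_{n+1}\in C_{n+1}$ and the identification $\|x_n-y_n\|=\alpha_n\|x_n-Tx_n\|$ with $\alpha_n\geq\alpha>0$.
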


In \cite{13}, Su and Qin got a new hybrid method, named Monotone CQ
iteration processes.

\begin{theorem}
Let $C$ be a nonempty closed convex subset of a Hilbert space $H$
and $T$ a nonexpansive mapping of \ $C$ into itself such that
$F(T)\neq \O$. Suppose $x_0\in C_0=C$ chosen arbitrarily and
$\{x_n\}$ is given by
\begin{equation}
\left\{\begin{array}{l}y_n=(1-\alpha_n) x_n+\alpha_n Tx_n\\
C_0=\{z\in C: \|z-y_0\|\leq\|z-x_0\|\}\\
Q_0=C\\
C_n=\{z\in C_{n-1}\cap Q_{n-1}: \|z-y_n\|\leq\|z-x_n\|\}\\
Q_n=\{z\in C_{n-1}\cap Q_{n-1}: \langle z-x_n,x_n-x_0\rangle\geq 0\}\\
x_{n+1}=P_{C_n\cap Q_n}x_0
\end{array}\right.
\end{equation}
where $\{\alpha_n\}$ is chosen such that $0<\alpha\leq\alpha_n\leq
1$. Then, $\{x_n\}$ converges strongly to $P_{F(T)}x_0$.
\end{theorem}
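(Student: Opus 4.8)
The plan is to run essentially the argument that proves Theorem 3.1, adapted to the nested (monotone) construction. First I would verify by induction on $n$ that each $C_n$ and $Q_n$ is closed and convex: after squaring, the condition $\|z-y_n\|\leq\|z-x_n\|$ is (via Lemma 2.1) a half-space constraint on $z$, and intersecting such a half-space with the closed convex set $C_{n-1}\cap Q_{n-1}$ preserves closedness and convexity; $Q_n$ is handled the same way. This guarantees that $P_{C_n\cap Q_n}x_0$ is meaningful once the set is known to be nonempty.

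Next I would show $F(T)\subset C_n\cap Q_n$ for all $n$, again by induction. For the $C_n$ part, if $p\in F(T)$ then nonexpansiveness gives $\|y_n-p\|\leq(1-\alpha_n)\|x_n-p\|+\alpha_n\|Tx_n-p\|\leq\|x_n-p\|$, so $p$ satisfies $\|p-y_n\|\leq\|p-x_n\|$; combined with the inductive hypothesis $F(T)\subset C_{n-1}\cap Q_{n-1}$ this places $p$ in $C_n$. For the $Q_n$ part I would invoke the variational characterization of the metric projection (Lemma 2.2): since $x_n=P_{C_{n-1}\cap Q_{n-1}}x_0$, every $z\in C_{n-1}\cap Q_{n-1}$ obeys $\langle z-x_n,x_n-x_0\rangle\geq0$, and in particular so does each $p\in F(T)$, whence $F(T)\subset Q_n$. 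Thus $F(T)\subset C_n\cap Q_n$ and $\{x_n\}$ is well defined.

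With containment and nesting in hand, the convergence analysis is that of Theorem 3.1. From $x_n=P_{C_{n-1}\cap Q_{n-1}}x_0$ together with $F(T)\subset C_{n-1}\cap Q_{n-1}$ and $x_{n+1}\in C_n\cap Q_n\subset C_{n-1}\cap Q_{n-1}$, the projection inequality applied to $p\in F(T)$ yields $\|x_0-x_n\|\leq\|x_0-p\|$ (boundedness), while applied to $x_{n+1}$ it yields $\|x_0-x_n\|\leq\|x_0-x_{n+1}\|$, so $\lim_n\|x_n-x_0\|$ exists. Lemma 2.1(i) then forces $\|x_{n+1}-x_n\|\to0$. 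Since $x_{n+1}\in C_n$ gives $\|x_{n+1}-y_n\|\leq\|x_{n+1}-x_n\|$, and $\|x_n-y_n\|=\alpha_n\|x_n-Tx_n\|$ with $\alpha_n\geq\alpha>0$, I would conclude $\|x_n-Tx_n\|\leq\frac{1}{\alpha}(\|x_n-x_{n+1}\|+\|x_{n+1}-y_n\|)\to0$. Finally, a nonexpansive map is $0$-strictly pseudo-contractive, so Lemma 2.6 provides demiclosedness of $I-T$ at zero; hence $\omega_w(x_n)\subset F(T)$, and Lemma 2.4 with $u=x_0$ and $q=P_{F(T)}x_0$ (using $\|x_n-x_0\|\leq\|x_0-q\|$) delivers $x_n\to P_{F(T)}x_0$.

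The main obstacle is organizational rather than analytical: the induction must be arranged so that the single variational inequality for $x_n=P_{C_{n-1}\cap Q_{n-1}}x_0$ can be invoked simultaneously for the fixed points $p\in F(T)$ and for the successor $x_{n+1}$, and this is precisely what the monotone nesting $F(T)\subset\cdots\subset C_n\cap Q_n\subset C_{n-1}\cap Q_{n-1}$ secures. Alternatively, in keeping with the relational theme of this section, one could derive Theorem 8.2 from the monotone C method (Theorem 8.1) by exploiting the equivalences among these hybrid schemes, rather than repeating the convergence analysis from scratch.
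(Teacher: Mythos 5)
Your proof is correct, but it takes a different route from the paper's. The paper never proves Theorem 8.2 directly: it is obtained at the end of Section 8 as the special case $C_n(thm6)=C_n$ of Theorem 8.6, which in turn is reached through the implication chain of Proposition 8.7 (Theorem 8.3 $\Rightarrow$ 8.4 $\Rightarrow$ 8.5 $\Leftrightarrow$ 8.6), ultimately resting on the generalized CQ framework of Theorem 3.1 via Theorem 4.3. You instead give a direct, self-contained convergence argument that re-runs the Theorem 3.1 machinery on the nested sets: induction for closedness, convexity and $F(T)\subset C_n\cap Q_n$, monotonicity of $\|x_n-x_0\|$, $\|x_{n+1}-x_n\|\to 0$ via Lemma 2.1(i), $\|x_n-Tx_n\|\to 0$ from $x_{n+1}\in C_n$ and $\alpha_n\geq\alpha>0$, then Lemmas 2.6 and 2.4. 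All the steps check out, including the observation that the variational characterization of $x_n=P_{C_{n-1}\cap Q_{n-1}}x_0$ makes every point of $C_{n-1}\cap Q_{n-1}$ satisfy the $Q_n$ constraint automatically --- which is exactly the structural fact the paper exploits in step (2) of the proof of Proposition 8.7 to show $C_{n-1}(thm9)\subset Q_n$. Your direct proof buys self-containedness and transparency about why the monotone nesting works; the paper's relational route buys economy (one convergence engine, Theorem 3.1, serving every variant) and is itself the point of Section 8. Your closing remark that one could instead derive the result from the equivalences is precisely the paper's chosen path.
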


\begin{remark}
Theorem 1.1, 8.1 and 8.2 seem different from each other. However,
the steps of their proof are more or less the same. So, they may
share some properties or may have some relations.
\end{remark}

In this section, we give the relations of the following four
theorems.

\begin{theorem}
Let $C$ be a nonempty closed convex subset of a Hilbert space $H$
and $T$ a nonexpansive mapping of \ $C$ into itself such that
$F(T)\neq \O$. Suppose $x_0\in C$ chosen arbitrarily and $\{x_n\}$
is given by
$$
\left\{\begin{array}{l}y_n=(1-\alpha_n) x_n+\alpha_n Tx_n\\
C_n=\{z\in C:\|y_n-z\|\leq\|x_n-z\|\}\\
Q_n=\{z\in C: \langle z-x_n,x_n-x_0\rangle\geq 0\}\\
x_{n+1}=P_{C_n(thm3)\cap Q_n}x_0
\end{array}\right.
$$
where $C_n(thm3)$ is a closed convex set with $F(T)\subset
C_n(thm3)\subset C_n$ and $\{\alpha_n\}$ is chosen such that
$0<\alpha\leq\alpha_n\leq 1$. Then, $\{x_n\}$ converges strongly to
$P_{F(T)}x_0$.
\end{theorem}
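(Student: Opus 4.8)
The plan is to recognize that this statement is, up to renaming $C_n^*$ as $C_n(thm3)$, verbatim Theorem 4.3; the quickest route is simply to invoke Theorem 4.3. For a self-contained argument I would instead reduce directly to the main result, Theorem 3.1. First I would dispose of the hypotheses on $T$: a nonexpansive mapping is $L$-Lipschitz with $L=1$, and since every nonexpansive mapping is a $0$-strict pseudo-contraction, Lemma 2.6 guarantees that $F(T)$ is closed and convex and that $I-T$ is demiclosed at zero. Hence the structural requirements Theorem 3.1 imposes on $T$, $F(T)$, and the demiclosedness of $I-T$ are met automatically.

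The next step is to cast the set $C_n=\{z\in C:\|y_n-z\|\leq\|x_n-z\|\}$ into the form demanded by Theorem 3.1. Writing $y_n=(1-\alpha_n)x_n+\alpha_n Tx_n$ and expanding both sides of $\|y_n-z\|\leq\|x_n-z\|$ by Lemma 2.1, I would derive that every $z\in C_n$ satisfies $\|x_n-Tx_n\|\leq\frac{2}{\alpha_n}\|x_n-z\|$; that is, $C_n\subset{^*C_n}$ where ${^*C_n}=\{z\in C:\|x_n-Tx_n\|\leq\frac{2}{\alpha_n}\|x_n-z\|\}$. This is exactly the inclusion already recorded just after Theorem 4.1. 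Setting $\mu_n=\frac{2}{\alpha_n}$ and $\theta_n(z)\equiv 0$, the hypothesis $0<\alpha\leq\alpha_n\leq 1$ yields $L+1=2\leq\mu_n\leq\frac{2}{\alpha}<\infty$, and $\lim_{n\to\infty}\theta_n(x_{n+1})=0$ holds trivially.

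With these identifications in place, $C_n(thm3)$ plays the role of the set $C_n^*$ in Theorem 3.1, satisfying $F(T)\subset C_n(thm3)\subset C_n\subset{^*C_n}$, while the sets $Q_n$ and the projection step $x_{n+1}=P_{C_n(thm3)\cap Q_n}x_0$ coincide with those of the framework. I would then apply Theorem 3.1 directly to conclude that $\{x_n\}$ converges strongly to $P_{F(T)}x_0$.

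The argument is essentially bookkeeping, so there is no genuine obstacle; the only point requiring care is the elementary inclusion $C_n\subset{^*C_n}$, whose verification rests on expanding the defining inequality of $C_n$ and using $\alpha_n\geq\alpha>0$ to keep the constant $\frac{2}{\alpha_n}$ bounded. I would stress that the purpose of restating this result here is not its proof but its role in Section 8 as the common generalization against which the monotone Q, monotone C, and monotone CQ methods are compared.
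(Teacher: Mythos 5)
Your proposal is correct and follows essentially the same route as the paper: the paper treats this statement (Theorem 8.3) as identical to Theorem 4.3, which it obtains from the framework Theorem 3.1 via the inclusion $F(T)\subset C_n(thm3)\subset C_n\subset{^*C_n}$ with ${^*C_n}=\{z\in C:\|x_n-Tx_n\|\leq\frac{2}{\alpha_n}\|x_n-z\|\}$, $\mu_n=\frac{2}{\alpha_n}$ and $\theta_n\equiv 0$, exactly as you describe. Your explicit appeal to Lemma 2.6 for the demiclosedness of $I-T$ is a detail the paper leaves implicit, but it is the intended justification.
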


\begin{theorem}
Let $C$ be a nonempty closed convex subset of a Hilbert space $H$
and $T$ a nonexpansive mapping of \ $C$ into itself such that
$F(T)\neq \O$. Suppose $x_0\in C$ chosen arbitrarily and $\{x_n\}$
is given by
$$
\left\{\begin{array}{l}y_n=(1-\alpha_n) x_n+\alpha_n Tx_n\\
C_n=\{z\in C:\|y_n-z\|\leq\|x_n-z\|\}\\
Q_0=C\\
Q_n=\{z\in Q_{n-1}: \langle z-x_n,x_n-x_0\rangle\geq 0\}\\
x_{n+1}=P_{C_n(thm4)\cap Q_n}x_0
\end{array}\right.
$$
where $C_n(thm4)$ is a closed convex set with $F(T)\subset
C_n(thm4)\subset C_n$ and $\{\alpha_n\}$ is chosen such that
$0<\alpha\leq\alpha_n\leq 1$. Then, $\{x_n\}$ converges strongly to
$P_{F(T)}x_0$.
\end{theorem}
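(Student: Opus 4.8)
The plan is to deduce Theorem 8.4 from Theorem 8.3, thereby realizing the implication ``CQ method $\Rightarrow$ monotone $Q$ method'' announced in the abstract. The key observation is that the recursively defined $Q_n$ is nothing but the intersection of all the half-spaces that occur, step by step, in the non-monotone construction, and that the ``extra'' half-spaces can be absorbed into the free set $C_n^*$ permitted by Theorem 8.3.

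First I would unwind the recursion. For $k\geq 1$ put $D_k=\{z\in C:\langle z-x_k,x_k-x_0\rangle\geq 0\}$ and set $D_0=C$. Since $Q_0=C$ and $Q_n=\{z\in Q_{n-1}:\langle z-x_n,x_n-x_0\rangle\geq 0\}$, an immediate induction gives $Q_n=\bigcap_{k=0}^{n}D_k$. Observe that the set called $Q_n$ in Theorem 8.3 is exactly $D_n$.

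Next I would introduce $C_n^{**}:=C_n(thm4)\cap\bigcap_{k=1}^{n-1}D_k$ (with $C_0^{**}=C_0(thm4)$), which is closed and convex as a finite intersection of closed convex sets, and which satisfies $C_n^{**}\subset C_n$ because $C_n(thm4)\subset C_n$. The substantive point is to verify $F(T)\subset C_n^{**}$, for which it suffices to show $F(T)\subset D_k$ for every $k$. I would prove this by induction, simultaneously with $F(T)\subset Q_{k-1}$: assuming $F(T)\subset D_j$ for all $j<k$, i.e. $F(T)\subset Q_{k-1}$, the iterate $x_k=P_{C_{k-1}(thm4)\cap Q_{k-1}}x_0$ obeys, by the variational characterization of the metric projection (Lemma 2.2),
$$\langle x_0-x_k,\,z-x_k\rangle\leq 0\qquad\text{for all }z\in C_{k-1}(thm4)\cap Q_{k-1}.$$
Taking $z=p\in F(T)\subset C_{k-1}(thm4)\cap Q_{k-1}$ gives $\langle p-x_k,x_k-x_0\rangle\geq 0$, that is $p\in D_k$, which closes the induction and yields $F(T)\subset C_n^{**}\subset C_n$.

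Finally I would note that, with the (history-dependent, hence admissible) choice $C_n^*=C_n^{**}$, the recursion of Theorem 8.4 is identical to that of Theorem 8.3: the auxiliary points $y_n$ and the set $C_n=\{z\in C:\|y_n-z\|\leq\|x_n-z\|\}$ are the same, and
$$C_n(thm4)\cap Q_n=C_n(thm4)\cap\bigcap_{k=1}^{n}D_k=C_n^{**}\cap D_n,$$
so that $x_{n+1}=P_{C_n(thm4)\cap Q_n}x_0=P_{C_n^{**}\cap D_n}x_0$ is precisely the update of Theorem 8.3. Since $C_n^{**}$ is a closed convex set with $F(T)\subset C_n^{**}\subset C_n$, Theorem 8.3 applies verbatim and gives $x_n\rightarrow P_{F(T)}x_0$. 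I expect the only delicate step to be the interleaved induction proving $F(T)\subset D_k$ (equivalently $F(T)\subset Q_n$); the remaining manipulations are purely set-theoretic bookkeeping. Should one prefer to avoid the reduction, the proof of Theorem 3.1 could instead be repeated line for line, since it uses only that each $Q_n$ is closed and convex, that $x_n=P_{Q_n}x_0$, that $x_{n+1}\in Q_n$, and that $F(T)\subset Q_n$, all of which persist for the monotone $Q_n$.
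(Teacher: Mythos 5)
Your proposal is correct and follows essentially the same route as the paper: the paper also deduces Theorem 8.4 from Theorem 8.3 by absorbing the accumulated half-spaces $\bigcap_{i=0}^{n-1}Q_i$ into the free closed convex set $C_n^*$ permitted by Theorem 8.3 (via its intermediate ``Theorem 8.8''), relying on the same induction, already present in the proof of Theorem 3.1, that $F(T)$ lies in every half-space $\langle z-x_k,x_k-x_0\rangle\geq 0$. Your write-up merely makes explicit the unwinding $Q_n=\bigcap_{k=0}^{n}D_k$ and the projection-characterization induction that the paper labels ``obviously''.
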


\begin{theorem}
Let $C$ be a nonempty closed convex subset of a Hilbert space $H$
and $T$ a nonexpansive mapping of \ $C$ into itself such that
$F(T)\neq \O$. Suppose $x_0\in C_0=C$ chosen arbitrarily and
$\{x_n\}$ is given by
$$
\left\{\begin{array}{l}y_n=(1-\alpha_n) x_n+\alpha_n Tx_n\\
C_{n+1}=\{z\in C_n(thm5):\|y_n-z\|\leq\|x_n-z\|\}\\
x_{n+1}=P_{C_{n+1}(thm5)}x_0
\end{array}\right.
$$
where $C_{n+1}(thm5)$ is a closed convex set with $F(T)\subset
C_{n+1}(thm5)\subset C_{n+1}$ and $\{\alpha_n\}$ is chosen such that
$0<\alpha\leq\alpha_n\leq 1$. Then, $\{x_n\}$ converges strongly to
$P_{F(T)}x_0$.
\end{theorem}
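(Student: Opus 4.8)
The plan is to run essentially the same scheme as in the proof of Theorem 3.1, but with the decreasing (nested) family $\{C_n(thm5)\}$ playing the role that $Q_n$ played there, since the monotone $C$ iteration carries no auxiliary half-space. First I would verify by induction on $n$ that $F(T)\subset C_n(thm5)$ and that the family is nested. The base case is $C_0(thm5)=C\supset F(T)$. For the inductive step, nonexpansiveness of $T$ gives, for any $p\in F(T)$,
\[
\|y_n-p\|\leq(1-\alpha_n)\|x_n-p\|+\alpha_n\|Tx_n-Tp\|\leq\|x_n-p\|,
\]
so $p\in C_{n+1}=\{z\in C_n(thm5):\|y_n-z\|\leq\|x_n-z\|\}$. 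The hypothesis $F(T)\subset C_{n+1}(thm5)\subset C_{n+1}\subset C_n(thm5)$ then simultaneously delivers $F(T)\subset C_{n+1}(thm5)$ and the nesting $C_{n+1}(thm5)\subset C_n(thm5)$, so every $x_{n+1}=P_{C_{n+1}(thm5)}x_0$ is well defined.

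Next I would harvest convergence of $\{\|x_n-x_0\|\}$ from the nesting. Since $x_{n+1}\in C_{n+1}(thm5)\subset C_n(thm5)$ while $x_n=P_{C_n(thm5)}x_0$ is the nearest point of $C_n(thm5)$ to $x_0$, we get $\|x_n-x_0\|\leq\|x_{n+1}-x_0\|$; and $F(T)\subset C_n(thm5)$ yields $\|x_n-x_0\|\leq\|p-x_0\|$ for every $p\in F(T)$. Hence $\{\|x_n-x_0\|\}$ is nondecreasing and bounded, so it converges. The decisive step is Cauchyness: for $m>n$ we have $x_m\in C_m(thm5)\subset C_{n+1}(thm5)$, so the variational characterization of $x_{n+1}=P_{C_{n+1}(thm5)}x_0$ (Lemma 2.2) applied with $y=x_m$ gives
\[
\|x_m-x_{n+1}\|^2\leq\|x_m-x_0\|^2-\|x_{n+1}-x_0\|^2,
\]
whose right-hand side tends to $0$ as $n,m\to\infty$ because $\{\|x_n-x_0\|\}$ converges. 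Thus $\{x_n\}$ is Cauchy and converges strongly to some $q$, and in particular $\|x_{n+1}-x_n\|\to 0$.

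It remains to identify $q$. From $x_{n+1}\in C_{n+1}$ we have $\|y_n-x_{n+1}\|\leq\|x_n-x_{n+1}\|$, whence $\|y_n-x_n\|\leq 2\|x_{n+1}-x_n\|\to 0$; since $\|y_n-x_n\|=\alpha_n\|x_n-Tx_n\|$ and $\alpha_n\geq\alpha>0$, we obtain $\|x_n-Tx_n\|\to 0$. Because $T$ is nonexpansive, $I-T$ is demiclosed at zero (Lemma 2.6 with $\kappa=0$); as $x_n\to q$ strongly, hence weakly, this forces $q\in F(T)$, so $\omega_w(x_n)\subset F(T)$. Finally, taking $p=P_{F(T)}x_0$ in $\|x_n-x_0\|\leq\|p-x_0\|$ gives $\|x_n-x_0\|\leq\|x_0-P_{F(T)}x_0\|$, and Lemma 2.4 with $u=x_0$ then yields $x_n\to P_{F(T)}x_0$.

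The hard part will be the Cauchy step, because—unlike in Theorem 3.1—there is no half-space $Q_n$ enforcing $x_n=P_{Q_n}x_0$, so that argument cannot be reused verbatim. The monotonicity must instead be extracted from the shrinking chain $C_{n+1}(thm5)\subset C_n(thm5)$, and the key observation making the projection inequality telescope is that $x_m$ stays inside $C_{n+1}(thm5)$ for all $m>n$. I note as an alternative that, since $x_n$ is itself a projection onto $C_n(thm5)$, Lemma 2.2 gives $C_n(thm5)\subset\{z:\langle z-x_n,x_n-x_0\rangle\geq 0\}$ automatically; thus the nested $C$-family secretly encodes the same half-space information that $Q_n$ supplies, and the statement could also be deduced from the monotone $Q$ scheme of Theorem 8.4.
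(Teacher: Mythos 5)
Your proof is correct, but it takes a genuinely different route from the paper's. The paper establishes this theorem (its Theorem 8.5) inside Proposition 8.7 by reduction: it rewrites the scheme, observes via Lemma 2.2 that $x_n=P_{C_{n-1}(thm9)}x_0$ forces $C_{n-1}(thm9)\subset Q_n=\{z\in C:\langle z-x_n,x_n-x_0\rangle\geq 0\}$, so that intersecting the constraint set with $\bigcap_{i\leq n}Q_i$ changes nothing, and thereby exhibits the monotone $C$ scheme as an instance of the monotone $Q$ scheme (Theorem 8.4), which in turn rests on Theorem 8.3 and ultimately on Theorem 3.1. You instead give a self-contained direct proof whose key step is Cauchyness: the nesting $C_m(thm5)\subset C_{n+1}(thm5)$ for $m>n$ lets you apply the variational characterization of $x_{n+1}=P_{C_{n+1}(thm5)}x_0$ at the point $x_m$ to get $\|x_m-x_{n+1}\|^2\leq\|x_m-x_0\|^2-\|x_{n+1}-x_0\|^2$, which together with the monotone boundedness of $\|x_n-x_0\|$ makes $\{x_n\}$ Cauchy. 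That is strictly more than the paper's machinery delivers at the corresponding stage (Theorem 3.1 only yields $\|x_{n+1}-x_n\|\to 0$ and must then invoke Lemma 2.4); indeed, once you have strong convergence you could dispense with demiclosedness entirely, since $x_n\to q$ and $\|x_n-Tx_n\|\to 0$ with $T$ continuous give $q\in F(T)$ directly. Your closing observation --- that Lemma 2.2 makes the nested $C$-family automatically sit inside the half-spaces $Q_n$, so the result also follows from the monotone $Q$ scheme --- is precisely the paper's argument. The only loose end is that $C_0(thm5)$ is not defined in the statement; you should say explicitly that you take $C_0(thm5)=C_0=C$, which is what gives $x_0=P_{C_0(thm5)}x_0$ and starts your induction.
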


\begin{theorem}
Let $C$ be a nonempty closed convex subset of a Hilbert space $H$
and $T$ a nonexpansive mapping of \ $C$ into itself such that
$F(T)\neq \O$. Suppose $x_0\in C$ chosen arbitrarily and $\{x_n\}$
is given by
$$
\left\{\begin{array}{l}y_n=(1-\alpha_n) x_n+\alpha_n Tx_n\\
C_0=\{z\in C:\|y_0-z\|\leq\|x_0-z\|\}\\
Q_0=C\\
C_n=\{z\in C_{n-1}(thm6)\cap Q_{n-1}:\|y_n-z\|\leq\|x_n-z\|\}\\
Q_n=\{z\in C_{n-1}(thm6)\cap Q_{n-1}: \langle z-x_n,x_n-x_0\rangle\geq 0\}\\
x_{n+1}=P_{C_n(thm6)\cap Q_n}x_0
\end{array}\right.
$$
where $C_n(thm6)$ is a closed convex set with $F(T)\subset
C_n(thm6)\subset C_n$ and $\{\alpha_n\}$ is chosen such that
$0<\alpha\leq\alpha_n\leq 1$. Then, $\{x_n\}$ converges strongly to
$P_{F(T)}x_0$.
\end{theorem}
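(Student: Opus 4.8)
The plan is to prove Theorem 8.6 by reducing the monotone CQ iteration to the monotone C iteration of Theorem 8.5: I will show that the sequence $\{x_n\}$ produced by the scheme above is in fact generated by a monotone C method, after which the strong convergence $x_n\rightarrow P_{F(T)}x_0$ follows immediately from Theorem 8.5. The whole point is that the half-space constraints $Q_n$ turn out to be redundant once the projection structure is exploited.

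First I would verify that each projection $x_{n+1}=P_{C_n(thm6)\cap Q_n}x_0$ is well defined by checking, via induction, that $F(T)\subset C_n(thm6)\cap Q_n$ for every $n$. Indeed $F(T)\subset C_n(thm6)$ holds by hypothesis, $F(T)\subset Q_0=C$ is trivial, and if $F(T)\subset C_{n-1}(thm6)\cap Q_{n-1}$ then every $p\in F(T)$ satisfies $\langle p-x_n,x_n-x_0\rangle\geq 0$ by the projection characterization (Lemma 2.2) applied to $x_n=P_{C_{n-1}(thm6)\cap Q_{n-1}}x_0$, whence $F(T)\subset Q_n$.

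The central step is to collapse the sets $Q_n$. Since $x_n=P_{C_{n-1}(thm6)\cap Q_{n-1}}x_0$, Lemma 2.2 gives $\langle z-x_n,x_n-x_0\rangle\geq 0$ for every $z\in C_{n-1}(thm6)\cap Q_{n-1}$. Comparing this with the definition of $Q_n$, which selects exactly those $z\in C_{n-1}(thm6)\cap Q_{n-1}$ obeying this same inequality, yields $Q_n=C_{n-1}(thm6)\cap Q_{n-1}$. I would then show by induction that $C_{n-1}(thm6)\subset Q_{n-1}$, using the chain $C_{n-1}(thm6)\subset C_{n-1}\subset C_{n-2}(thm6)\cap Q_{n-2}=Q_{n-1}$, which gives $Q_n=C_{n-1}(thm6)$. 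Consequently $C_n(thm6)\cap Q_n=C_n(thm6)$, because $C_n(thm6)\subset C_n\subset C_{n-1}(thm6)\cap Q_{n-1}=Q_n$, so the projection reduces to $x_{n+1}=P_{C_n(thm6)}x_0$; and the generating set simplifies to $C_n=\{z\in C_{n-1}(thm6):\|y_n-z\|\leq\|x_n-z\|\}$, since the restriction $z\in Q_{n-1}$ is automatic on $C_{n-1}(thm6)$.

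These two simplifications show that $\{x_n\}$ is precisely the sequence generated by the monotone C scheme of Theorem 8.5 (up to the harmless index shift arising from the different treatment of the initial set $C_0$). Invoking Theorem 8.5 then yields $x_n\rightarrow P_{F(T)}x_0$, which completes the proof. I expect the main obstacle to lie in the simultaneous recursion defining $C_n$ and $Q_n$: the equalities $Q_n=C_{n-1}(thm6)$ and $C_n(thm6)\cap Q_n=C_n(thm6)$ must be established together in a single induction, taking care that at each stage the set on which the current projection is taken is exactly the one whose defining inequality the previous iterate renders vacuous.
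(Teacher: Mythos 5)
Your reduction is correct and matches the paper's own argument for this step (part (3) of the proof of Proposition 8.7, carried out through the restatements Theorems 8.9--8.11): both rest on the observation that, by Lemma 2.2, $x_n=P_{C_{n-1}(thm6)\cap Q_{n-1}}x_0$ forces $C_{n-1}(thm6)\cap Q_{n-1}\subset Q_n$, so each $Q_n$ is redundant and the monotone CQ scheme collapses to the monotone C scheme of Theorem 8.5. The only difference is direction -- you strip the vacuous $Q_n$ from the scheme of Theorem 8.6, while the paper adjoins them to the scheme of Theorem 8.5 -- and your version states the inductive bookkeeping somewhat more explicitly than the paper does.
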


\begin{proposition}
Theorem 8.3 TRUE $\Rightarrow$ Theorem 8.4 TRUE $\Rightarrow$
Theorem 8.5 TRUE $\Leftrightarrow$ Theorem 8.6 TRUE, Where Theorem
8.3 TRUE indicates that Theorem 8.3 is valid.
\end{proposition}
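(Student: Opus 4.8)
The plan is to prove each implication by a \emph{reduction}: given an arbitrary instance of the target theorem generating a sequence $\{x_n\}$, I construct an instance of the source theorem (which is \emph{assumed} valid) generating the identical sequence, so that the conclusion $x_n\to P_{F(T)}x_0$ transfers verbatim. This is legitimate because in all four schemes the iterate has the single form $x_{n+1}=P_{S_n}x_0$ with $S_n$ closed convex and $F(T)\subset S_n$, and because the auxiliary set $C_n(thm\,\cdot)$ is allowed to depend on the already-computed iterates $x_0,\dots,x_n$, which lets me absorb the differing ``$Q$''-constraints into it. Two tools are used repeatedly: Lemma 2.2, which converts $x_n=P_{S_{n-1}}x_0$ into $\langle z-x_n,x_n-x_0\rangle\ge 0$ for all $z\in S_{n-1}$; and the induction of Theorem 3.1, which guarantees $F(T)\subset Q_n$, hence $F(T)$ lies in every half-space $H_j:=\{z:\langle z-x_j,x_j-x_0\rangle\ge 0\}$. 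Write $D_n:=\{z\in C:\|y_n-z\|\le\|x_n-z\|\}$ for the common single-step set.

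For $8.3\Rightarrow 8.4$, the monotone set of $8.4$ is $Q_n=\bigcap_{j=0}^{n}H_j$, whereas $8.3$ imposes only $H_n$. Given a $8.4$ instance I set
\[
C_n(thm3):=C_n(thm4)\cap\bigcap_{j=0}^{n-1}H_j .
\]
This set is closed and convex, satisfies $F(T)\subset C_n(thm3)$ (each $H_j\supset F(T)$), and lies in $C_n(thm4)\subset D_n$; moreover $C_n(thm3)\cap H_n=C_n(thm4)\cap Q_n$, so the projections, and hence by induction the whole sequences, coincide. For $8.4\Rightarrow 8.5$, the nested family of $8.5$ already encodes the required monotonicity: since $x_j=P_{C_j(thm5)}x_0$ with the $C_j(thm5)$ decreasing, Lemma 2.2 gives $C_{n+1}(thm5)\subset C_j(thm5)\subset H_j$ for every $j\le n$, i.e.\ $C_{n+1}(thm5)\subset Q_n$. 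Given a $8.5$ instance I therefore take $C_n(thm4):=C_{n+1}(thm5)$, for which $F(T)\subset C_n(thm4)\subset C_n(thm5)\cap D_n\subset D_n$ and $C_n(thm4)\cap Q_n=C_{n+1}(thm5)$, so $P_{C_n(thm4)\cap Q_n}x_0=P_{C_{n+1}(thm5)}x_0=x_{n+1}$, and the sequences agree by induction.

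The crux is the equivalence $8.5\Leftrightarrow 8.6$, and the key observation is that the $Q_n$ of $8.6$ is \emph{redundant}. From $x_n=P_{C_{n-1}(thm6)\cap Q_{n-1}}x_0$ and Lemma 2.2, every point of $C_{n-1}(thm6)\cap Q_{n-1}$ already satisfies the inequality defining $Q_n$, so $Q_n=C_{n-1}(thm6)\cap Q_{n-1}$; since the $C_n(thm6)$ are nested decreasing (because $C_n(thm6)\subset C_n\subset C_{n-1}(thm6)\cap Q_{n-1}$), unfolding yields $Q_n=C_{n-1}(thm6)$ and therefore $C_n(thm6)\cap Q_n=C_n(thm6)$. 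Thus $8.6$ collapses to $x_{n+1}=P_{C_n(thm6)}x_0$ with $C_n(thm6)$ nested and $C_n(thm6)\subset C_{n-1}(thm6)\cap D_n$, which is precisely the $8.5$ recursion under the identification $C_n(thm6)=C_{n+1}(thm5)$ together with $C_0(thm5)=C$. A single joint induction then matches any $8.6$ instance with the corresponding $8.5$ instance, and conversely, transferring validity in both directions.

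I expect this collapse of the monotone-$Q$ constraint, together with the index bookkeeping forced by the initialization $C_0=\{z:\|y_0-z\|\le\|x_0-z\|\}$, $Q_0=C$ of Theorem $8.6$, to be the only genuinely delicate points; everything else is the routine verification that each constructed instance satisfies the structural hypotheses $F(T)\subset C_n(thm\,\cdot)\subset(\text{the ambient }C_n)$ and inherits the same step-sizes $\{\alpha_n\}$, so that the source theorem indeed applies.
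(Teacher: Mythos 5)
Your proposal is correct and follows essentially the same route as the paper: both arguments reduce each scheme to an instance of the previous one by absorbing the extra half-space constraints $H_j=\{z:\langle z-x_j,x_j-x_0\rangle\ge 0\}$ into the free set $C_n(\cdot)$, and both use Lemma 2.2 applied to $x_j=P_{S_{j-1}}x_0$ to show that the monotone $Q$-sets are redundant once the $C$-sets are nested. The only difference is presentational --- the paper routes the argument through restated intermediate theorems (8.8--8.11) rather than writing the reduction maps explicitly as you do.
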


\begin{proof}
Clearly, Theorem 8.3 is valid.

(1) Theorem 8.3 TRUE $\Rightarrow$ Theorem 8.4 TRUE. Obviously,
$F(T)\subset C_n\cap(\bigcap_{i=0}^{n-1} Q_i)$. So, there exists a
closed convex set $C_n(thm8)$ such that $F(T)\subset
C_n(thm8)\subset C_n\cap(\bigcap_{i=0}^{n-1} Q_i)\subset C_n$. By
Theorem 8.3, we obtain the following theorem.
\begin{theorem}
Let $C$ be a nonempty closed convex subset of a Hilbert space $H$
and $T$ a nonexpansive mapping of \ $C$ into itself such that
$F(T)\neq \O$. Suppose $x_0\in C$ chosen arbitrarily and $\{x_n\}$
is given by
$$
\left\{\begin{array}{l}y_n=(1-\alpha_n) x_n+\alpha_n Tx_n\\
C_n=\{z\in C:\|y_n-z\|\leq\|x_n-z\|\}\\
Q_n=\{z\in C: \langle z-x_n,x_n-x_0\rangle\geq 0\}\\
x_{n+1}=P_{C_n(thm8)\cap(\bigcap_{i=0}^{n} Q_i)}x_0
\end{array}\right.
$$
where $C_n(thm8)$ is a closed convex set with $F(T)\subset
C_n(thm8)\subset C_n$ and $\{\alpha_n\}$ is chosen such that
$0<\alpha\leq\alpha_n\leq 1$. Then, $\{x_n\}$ converges strongly to
$P_{F(T)}x_0$.
\end{theorem}

Let $C_n(thm8)=C_n(thm4)$, then, Theorem 8.8 is equivalent to
Theorem 8.4.

(2) Theorem 8.4 TRUE $\Rightarrow$ Theorem 8.5 TRUE. Actually,
Theorem 8.5 can be rewritten as

\begin{theorem}
Let $C$ be a nonempty closed convex subset of a Hilbert space $H$
and $T$ a nonexpansive mapping of \ $C$ into itself such that
$F(T)\neq \O$. Suppose $x_0\in C$ chosen arbitrarily and $\{x_n\}$
is given by
$$
\left\{\begin{array}{l}y_n=(1-\alpha_n) x_n+\alpha_n Tx_n\\
C_0=\{z\in C:\|y_0-z\|\leq\|x_0-z\|\}\\
C_n=\{z\in C_{n-1}(thm9):\|y_n-z\|\leq\|x_n-z\|\}\\
x_{n+1}=P_{C_n(thm9)}x_0
\end{array}\right.
$$
where $C_n(thm9)$ is a closed convex set with $F(T)\subset
C_n(thm9)\subset C_n$ and $\{\alpha_n\}$ is chosen such that
$0<\alpha\leq\alpha_n\leq 1$. Then, $\{x_n\}$ converges strongly to
$P_{F(T)}x_0$.
\end{theorem}

Since $x_n=P_{C_{n-1}(thm9)}x_0$, then, $C_{n-1}(thm9)\subset
Q_n=\{z\in C: \langle z-x_n,x_n-x_0\rangle\geq 0\}$. Together with
$C_n(thm9)\subset C_{n-1}(thm9)$, we claim that
$C_n(thm9)\subset(\bigcap_{i=0}^n Q_i)$, i.e.,
$C_n(thm9)\cap(\bigcap_{i=1}^n Q_i)=C_n(thm9)$, where $Q_i=\{z\in C:
\langle z-x_i,x_i-x_0\rangle\geq 0\}$. Hence, Theorem 8.9 is
equivalent to

\begin{theorem}
Let $C$ be a nonempty closed convex subset of a Hilbert space $H$
and $T$ a nonexpansive mapping of \ $C$ into itself such that
$F(T)\neq \O$. Suppose $x_0\in C$ chosen arbitrarily and $\{x_n\}$
is given by
$$
\left\{\begin{array}{l}y_n=(1-\alpha_n) x_n+\alpha_n Tx_n\\
C_0=\{z\in C:\|y_0-z\|\leq\|x_0-z\|\}\\
C_n=\{z\in C_{n-1}(thm10):\|y_n-z\|\leq\|x_n-z\|\}\\
Q_n=\{z\in C: \langle z-x_n,x_n-x_0\rangle\geq 0\}\\
x_{n+1}=P_{C_n(thm10)\cap(\bigcap_{i=0}^n Q_i)}x_0
\end{array}\right.
$$
where $C_n(thm10)$ is a closed convex set with $F(T)\subset
C_n(thm10)\subset C_n$ and $\{\alpha_n\}$ is chosen such that
$0<\alpha\leq\alpha_n\leq 1$. Then, $\{x_n\}$ converges strongly to
$P_{F(T)}x_0$.
\end{theorem}

It is easy to observe that Theorem 8.10 is equal to

\begin{theorem}
Let $C$ be a nonempty closed convex subset of a Hilbert space $H$
and $T$ a nonexpansive mapping of \ $C$ into itself such that
$F(T)\neq \O$. Suppose $x_0\in C$ chosen arbitrarily and $\{x_n\}$
is given by
$$
\left\{\begin{array}{l}y_n=(1-\alpha_n) x_n+\alpha_n Tx_n\\
C_0=\{z\in C:\|y_0-z\|\leq\|x_0-z\|\}\\
Q_0=C\\
C_n=\{z\in C_{n-1}(thm11):\|y_n-z\|\leq\|x_n-z\|\}\\
Q_n=\{z\in Q_{n-1}: \langle z-x_n,x_n-x_0\rangle\geq 0\}\\
x_{n+1}=P_{C_n(thm11)\cap Q_n}x_0
\end{array}\right.
$$
where $C_n(thm11)$ is a closed convex set with $F(T)\subset
C_n(thm11)\subset C_n$ and $\{\alpha_n\}$ is chosen such that
$0<\alpha\leq\alpha_n\leq 1$. Then, $\{x_n\}$ converges strongly to
$P_{F(T)}x_0$.
\end{theorem}

Clearly, Theorem 8.11 can be deduced by Theorem 8.4. So, Theorem 8.4
TRUE $\Rightarrow$ Theorem 8.5 TRUE.

(3) Theorem 8.5 TRUE $\Leftrightarrow$ Theorem 8.6 TRUE. Obviously,
let $C_n(thm11)=C_n(thm6)$, $C_n(thm11)\cap Q_n$ in Theorem 8.11 is
equal to $C_n(thm6)\cap Q_n$ in Theorem 8.6. Hence, Theorem 8.11 is
equivalent to Theorem 8.6.

\end{proof}

Moreover, if we take $C_n(thm5)=C_n$ and $C_n(thm6)=C_n$, then, we
can conclude that Theorem 8.1 is equivalent to Theorem 8.2.

\begin{remark}
From the proof of the Proposition 8.7, we observe that the
proposition is independent of mapping $T$. If Theorem 8.3, 8.4, 8.5
and 8.6 represent CQ method, monotone Q method, monotone C method
and monotone CQ method, respectively, then, we have the following
relations:

CQ method TRUE $\Rightarrow$ monotone Q method TRUE $\Rightarrow$
monotone C method TRUE $\Leftrightarrow$ monotone CQ method TRUE.
\end{remark}

\end{document}